\providecommand{\U}[1]{\protect\rule{.1in}{.1in}}
\newtheorem{theorem}{Theorem}
\newtheorem{corollary}[theorem]{Corollary}
\newtheorem{definition}[theorem]{Definition}
\newtheorem{remark}[theorem]{Remark}
\newenvironment{proof}[1][Proof]{\noindent \textbf{#1.} }{\  \rule{0.5em}{0.5em}}
\newdimen \dummy
\begin{document}

\title{Finite Bivariate Biorthogonal I - Konhauser Polynomials}
\author{Güldoğan Lekesiz, E., Çekim, B.* and Özarslan, M.A.}
\maketitle

\author{Esra G\"{U}LDO\u{G}AN LEKES\.{I}Z\\Çankaya University, Faculty of Arts and Sciences, Department of Mathematics, Ankara 06790, T\"{u}rkiye
\\
esraguldoganlekesiz@cankaya.edu.tr}\\

\author{Bayram \c{C}EK\.{I}M*\\Gazi University, Faculty of Science, Department of Mathematics, Ankara 06500, T\"{u}rkiye
\\
bayramcekim@gazi.edu.tr}\\

\author{Mehmet Ali \"{O}ZARSLAN\\Eastern Mediterranean University, Faculty of Arts and Sciences, Department of Mathematics, Gazimagusa, TRNC, Mersin 10, T\"{u}rkiye
\\
mehmetali.ozarslan@emu.edu.tr}
\\

\begin{abstract}
In the present study, a finite set of biorthogonal polynomials in two
variables, produced from Konhauser polynomials, is introduced. Some properties
like Laplace transform, integral and operational representation, fractional
calculus operators of this family are investigated. Also, we compute Fourier
transform for this new set and discover a new family of finite biorthogonal
functions with the help of Parseval's identity. Further, in order to have
semigroup property, we modify this finite set by adding two new parameters and
construct fractional calculus operators. Thus, integral equation and integral
operator are obtained for the modified version.

\end{abstract}

\textit{Keywords :} Finite biorthogonal polynomial, Konhauser polynomial,
Mittag-Leffler function, Fractional operator, Laplace transform, Fourier transform

\section{Introduction}

Orthogonal polynomials have been the subject of different problems occuring in theoretical and applied sciences. Their extension like multivariable generalization, biorthogonal and $d$-orthogonal analogues, matrix forms and $q$-extension have also been intriguid for many years.
For example, \cite{MST} has introduced $q$-extension of Sheffer sequences of Bessel type using $q$-Riordan matrices by means of the Eulerian generating function.
The study \cite{SM} is first attempt in the direction of deriving integral equations for the Appell and 2-iterated Appell families and for some members belonging to these families.
In \cite{SSAM}, the generalized forms of the Legendre polynomials and Legendre-Appell polynomials are introduced and studied by means of fractional operators and the combination of the operational definitions and integral representations.
In \cite{RTMNA}, the Gould-Hopper matrix polynomials combined with the Bessel functions and the Tricomi functions respectively, to construct the new class of hybrid matrix functions. The generalized forms of the Gould-Hopper-Bessel matrix and Gould-Hopper-Tricomi matrix functions are introduced using integral transform and operational rule.

In this study a two-variable biorthogonal extension of finite orthogonal polynomials is studied. For this purpose, we first give the definitions of biorthogonal polynomials and finite orthogonal polynomials.

The biorthogonal polynomials are introduced by Didon \cite{Didon} and Deruyts
\cite{Deruyts}. They are $k$-th degree polynomial solutions of the
differential equation%
\[
B_{1}(t)y^{\prime\prime\prime}+B_{2}(t)y^{\prime\prime}+B_{3}(t)y^{\prime
}=\lambda_{s}y.
\]

\begin{definition}
If%
\[
J_{s,k}=\int\limits_{\alpha_{1}}^{\alpha_{2}}D_{s}(t)F_{k}(t)\rho\left(
t\right)  dt=%
\genfrac{\{}{.}{0pt}{}{\ \ \ 0;\ \ s\neq k}{\neq0;\ \ s=k,}%
,\ s,k\in%
\mathbb{N}
_{0},
\]
the polynomials $D_{s}(t)$ and $F_{k}(t)$ corresponding to the fundamental
polynomials $d(t)$ and $f(t)$\ are biorthogonal with respect to the weight
function $\rho(t)$ on $\left(  \alpha_{1},\alpha_{2}\right)  $
\cite{Konhauser2}.
\end{definition}

We can give the theorem below, equivalently.

\begin{theorem}
Let $d(t)$ and $f(t)$ be fundamental polynomials corresponding to polynomials
$D_{s}(t)$ and$\ F_{k}(t)$, respectively, and $\rho(t)$ be the weight function
over the interval $\left(  \alpha_{1},\alpha_{2}\right)  $. For $s,k\in%
\mathbb{N}
_{0}$, the necessary and sufficient condition is%
\[
J_{s,k}=\int\limits_{\alpha_{1}}^{\alpha_{2}}D_{s}(t)F_{k}(t)\rho\left(
t\right)  dt=%
\genfrac{\{}{.}{0pt}{}{\ \ \ 0;\ \ s\neq k}{\neq0;\ \ s=k}%
\]
so that%
\[
\int\limits_{\alpha_{1}}^{\alpha_{2}}\left[  d\left(  t\right)  \right]
^{j}F_{k}(t)\rho\left(  t\right)  dt=%
\genfrac{\{}{.}{0pt}{}{\ \ 0;\ j=0,1,...,k-1}{\neq
0;\ j=k\ \ \ \ \ \ \ \ \ \ \ \ \ \ \ }%
\]
and%
\[
\int\limits_{\alpha_{1}}^{\alpha_{2}}\left[  f\left(  t\right)  \right]
^{j}D_{s}(t)\rho\left(  t\right)  dt=%
\genfrac{\{}{.}{0pt}{}{\ \ 0;\ j=0,1,...,s-1}{\neq
0;\ j=s\ \ \ \ \ \ \ \ \ \ \ \ \ \ \ }%
,
\]
are provided \cite{Konhauser2}.
\end{theorem}

There are univariate analogues of biorthogonal polynomials in the literature
and many researchers have been studied on these polynomials until today
\cite{Konhauser2, Konhauser, Carlitz, MT, MT2, AlSalamVerma, MT0}. Konhauser
\cite{Konhauser} derived the biorthogonal pairs of polynomials suggested
by\ the Laguerre polynomials corresponding to weight function $t^{\alpha
}e^{-t}$. Toscano \cite{Toscano} and Carlitz \cite{Carlitz} studied the
Konhauser polynomials. In 1982, Madhekar\&Thakare \cite{MT} introduced the
biorthogonal polynomials suggested by the Jacobi polynomials with the weight
function $(1-t)^{\alpha}(1+t)^{\beta}$. Later, in 1986 and 1988, the
biorthogonal pairs corresponding to the Szeg\"{o}-Hermite weight function
$\left\vert t\right\vert ^{2\mu}exp(-t^{2}),\ \mu>-1/2$ and the Hermite weight
function $\exp\left(  -t^{2}\right)  $\ were taken up by Thakare\&Madhekar
\cite{MT2,MT0}. For the biortogonal polynomials, the readers can refer to the references in \cite{Cesarano,BGS,BEH}.

For $\gamma>-1$ and $\upsilon=1,2,...$, one of the well-known biorthogonal
polynomials in one variable is Konhauser polynomials \cite{Konhauser} defined
by%
\begin{equation}
Z_{k}^{\gamma}\left(  t;\upsilon\right)  =\frac{\Gamma\left(  \gamma
+1+\upsilon k\right)  }{k!}\sum\limits_{j=0}^{k}\binom{k}{j}\frac{\left(
-1\right)  ^{j}t^{\upsilon j}}{\Gamma\left(  \upsilon j+\gamma+1\right)  }
\label{Zdef}%
\end{equation}
and%
\begin{equation}
Y_{k}^{\gamma}\left(  t;\upsilon\right)  =\frac{1}{k!}\sum\limits_{j=0}%
^{k}\frac{t^{j}}{j!}\sum\limits_{m=0}^{j}\left(  -1\right)  ^{m}\binom{j}%
{m}\left(  \frac{1+\gamma+m}{\upsilon}\right)  _{k}. \label{Ydef}%
\end{equation}

From \cite{Konhauser}, for the pair (\ref{Zdef}) and (\ref{Ydef}), the corresponding biorthogonality
relation is%
\begin{equation}
\int\limits_{0}^{\infty}Z_{k}^{\gamma}\left(  t;\upsilon\right)  Y_{s}%
^{\gamma}\left(  t;\upsilon\right)  e^{-t}t^{\gamma}dt=\frac{\Gamma\left(
\gamma+\upsilon k+1\right)  }{k!}\delta_{k,s}. \label{3}%
\end{equation}

Recently, some more general biorthogonal polynomials in two variables are also
studied. First, Bin-Saad introduced the Laguerre-Konhauser polynomials
\cite{BinSaad} as follows%
\begin{align}
\ _{\upsilon}L_{k}^{\left(  \mu,\gamma\right)  }\left(  z,t\right)   &
=k!\sum\limits_{l=0}^{k}\sum\limits_{n=0}^{k-l}\frac{\left(  -1\right)
^{n+l}z^{n+\mu}t^{\upsilon l+\gamma}}{\left(  k-l-n\right)  !n!l!\Gamma\left(
n+1+\mu\right)  \Gamma\left(  \upsilon l+1+\gamma\right)  }\label{BS}\\
&  =k!\sum\limits_{l=0}^{k}\frac{\left(  -1\right)  ^{l}z^{l+\mu}t^{\gamma
}Z_{k-l}^{\gamma}\left(  t;\upsilon\right)  }{l!\Gamma\left(  \upsilon
k-\upsilon l+\gamma+1\right)  \Gamma\left(  l+\mu+1\right)  }\nonumber\\
&  =k!\sum\limits_{l=0}^{k}\frac{\left(  -1\right)  ^{l}z^{\mu}t^{\upsilon
l+\gamma}L_{k-l}^{\mu}\left(  z\right)  }{l!\Gamma\left(  \upsilon
l+\gamma+1\right)  \Gamma\left(  k-l+\mu+1\right)  },\nonumber
\end{align}
where $Z_{k}^{\gamma}\left(  t;\upsilon\right)  $ are the first set of
Konhauser polynomials (\ref{Zdef}) and $L_{k}^{\mu}\left(  t\right)  $ are the
generalized Laguerre polynomials, given in \cite{R}, for $\upsilon=1,2,...$ and $\mu,\gamma>-1.$

Then, \"{O}zarslan and K\"{u}rt \cite{OzKurt} derived the second
set$\ _{\upsilon}%
\mathcal{L}%
_{k}^{\left(  \mu,\gamma\right)  }\left(  z,t\right)  $ of the form%
\begin{equation}
\ _{\upsilon}%
\mathcal{L}%
_{k}^{\left(  \mu,\gamma\right)  }\left(  z,t\right)  =L_{k}^{\mu}\left(
z\right)  \sum\limits_{l=0}^{k}Y_{l}^{\gamma}\left(  t;\upsilon\right)  ,
\label{OK}%
\end{equation}
and they showed that polynomials (\ref{OK}) and (\ref{BS}) are biorthonormal
with respect to the weight function $\rho\left(  z,t\right)  =e^{-\left(
t+z\right)  }$ over $\left(  0,\infty\right)  \times\left(  0,\infty\right)
$. Also, they presented a bivariate Mittag-Leffler functions $E_{\mu
,\gamma,\upsilon}^{(\alpha)}\left(  z,t\right)  $, corresponding to the
polynomials $_{\upsilon}L_{k}^{\left(  \mu,\gamma\right)  }\left(  z,t\right)
$, with%
\begin{align}
E_{\mu,\gamma,\upsilon}^{(\alpha)}\left(  z,t\right)  =\sum\limits_{l=0}%
^{\infty}\sum\limits_{n=0}^{\infty}\frac{\left(  \alpha\right)  _{n+l}%
\ z^{l}t^{\upsilon n}}{n!l!\Gamma\left(  \mu+l\right)  \Gamma\left(
\gamma+\upsilon n\right)  } \label{OKEdef},
\end{align}
where $\operatorname{Re}\left(  \upsilon\right)  >0$, $\operatorname{Re}%
\left(  \gamma\right)  >0$, $\operatorname{Re}\left(  \mu\right)  >0$,
$\operatorname{Re}\left(  \alpha\right)  >0$ and$\ \upsilon,\gamma,\mu
,\alpha\in%
\mathbb{C}
$, and gave the relation%
\[
\ _{\upsilon}L_{k}^{\left(  \mu,\gamma\right)  }\left(  z,t\right)  =z^{\mu
}t^{\gamma}E_{\mu+1,\gamma+1,\upsilon}^{(-k)}\left(  z,t\right)
\]
between $_{\upsilon}L_{k}^{\left(  \mu,\gamma\right)  }\left(  z,t\right)  $
and $E_{\mu,\gamma,\upsilon}^{(\alpha)}\left(  z,t\right)  $ in \cite{OzKurt}.

Recently, in \cite{OzEl}, by using the method in \textit{Theorem 3,} the 2D
biorthogonal Hermite Konhauser polynomials have been defined with%
\begin{equation}
\ _{\upsilon}H_{k}^{\left(  \mu\right)  }\left(  z,t\right)  =\sum
\limits_{l=0}^{\left[  k/2\right]  }\sum\limits_{n=0}^{k-l}\frac{\left(
-1\right)  ^{l}\left(  -k\right)  _{2l}\left(  -k\right)  _{l+n}\left(
2z\right)  ^{k-2l}t^{\upsilon n}}{\left(  -k\right)  _{l}\Gamma\left(
\upsilon n+\mu+1\right)  l!n!}, \label{polH}%
\end{equation}
where $\upsilon=1,2,...$ and $\mu>-1$. Further, it has shown that (\ref{polH})
and%
\begin{equation}
Q_{k}\left(  z,t\right)  =H_{k}\left(  z\right)  \sum\limits_{l=0}^{k}%
Y_{l}^{\left(  \mu\right)  }\left(  t;\upsilon\right)  \label{HermiteQ}%
\end{equation}
constitute a pair of bivariate biorthogonal polynomials, where $Y_{l}^{\left(
\mu\right)  }\left(  t;\upsilon\right)  $ are the Konhauser polynomials
defined by (\ref{Ydef}) and $H_{k}\left(  t\right)  $ are the classical
Hermite polynomials (The readers can look the reference in \cite{Cesarano2} for Hermite polynomials). Similar to the papers \cite{OzKurt, OzEl} the
corresponding bivariate Mittag-Leffler functions are defined.

Motivated by papers \cite{OzKurt} and \cite{OzEl}, we derive finite
biorthogonal polynomials in two variables for the first time in this study.
Thus, the notion of finitude is first transferred to biorthogonal polynomials
thanks to this research.

The aim of this paper is to introduce a new family of bivariate biorthogonal
polynomials and the corresponding Mittag-Leffler function in two variables to
the new family. Also, integral and operational representation, fractional
calculus operators, Laplace transform and Fourier transform are obtained for
the new Mittag-Leffler functions and, therefore, for the finite bivariate
biorthogonal polynomials. Further, we give a limit relation for the finite
biorthogonal I - Konhauser polynomials.

On the other hand, by modifying the finite biorthogonal polynomials and the
corresponding Mittag-Leffler functions with two new parameters, we present a
set having the semigroup property.

The general method enabling construct bivariate biorthogonal polynomials using
a univariate biorthogonal and a univariate orthogonal polynomials is reminded
in the first section. In Section 2, via the general method, we present a new
pair of the finite biorthogonal polynomials,$\ _{K}I_{k;\upsilon}^{\left(
p,q\right)  }\left(  z,t\right)  $ and$\ _{K}\mathcal{I}_{k;\upsilon}^{\left(
p,q\right)  }\left(  z,t\right)  $, with the help of the finite orthogonal
polynomials $I_{k}^{\left(  p\right)  }\left(  t\right)  $ and the Konhauser
polynomials, and define the corresponding Mittag-Leffler functions. Thus, we
present a connection between the polynomials$\ _{K}I_{k;\upsilon}^{\left(
p,q\right)  }\left(  z,t\right)  $ and the corresponding Mittag-Leffler
functions $E_{q,\upsilon}^{\left(  \gamma_{1};\gamma_{2};\gamma_{3};\gamma
_{4}\right)  }\left(  z,t\right)  $. Then, we obtain their operational and
integral representation, and compute their Laplace transform. Also, we
consider the fractional calculus approach for these families. After calculating Fourier
transform for the finite biorthogonal polynomials, we give a new set of finite
biorthogonal functions in the light of Parseval identity. In the third
section, to create fractional calculus operators, we modify finite bivariate
biorthogonal polynomials and define the modified Mittag-Leffler functions
corresponding to this modified polynomials. Several properties of the modified
version are studied, and an integral equation and an integral operator are
also constructed.

To realize these objections, let's remind polynomials $I_{k}^{\left(  p\right)
}\left(  t\right)  $.\bigskip

Recall that polynomials $I_{k}^{\left(  p\right)  }\left(  t\right)  $
satisfying the equation%
\[
\left(  1+t^{2}\right)  y_{k}^{\prime\prime}\left(  t\right)  +\left(
3-2p\right)  ty_{k}^{\prime}\left(  t\right)  -k\left(  k+2-2p\right)
y_{k}\left(  t\right)  =0,
\]
are defined by \cite{Masjed}%
\begin{equation}
I_{k}^{\left(  p\right)  }\left(  t\right)  =k!\sum_{l=0}^{\left[  k/2\right]
}\frac{\left(  -1\right)  ^{l}\Gamma\left(  p\right)  \left(  2t\right)
^{k-2l}}{l!\left(  k-2l\right)  !\Gamma\left(  p-k+l\right)  }, \label{Idef}%
\end{equation}
and the following finite orthogonality condition%
\begin{equation}
\int\limits_{-\infty}^{\infty}\left(  1+t^{2}\right)  ^{-\left(  p-1/2\right)
}I_{k}^{\left(  p\right)  }\left(  t\right)  I_{s}^{\left(  p\right)  }\left(
t\right)  dt=\frac{k!2^{2\left(  p-1\right)  }\Gamma^{2}\left(  p\right)
\delta_{k,s}}{\left(  p-k-1\right)  \Gamma\left(  2p-k-1\right)  }
\label{Iort}%
\end{equation}
holds if and only if $p>\max\left\{  k,s\right\}  +1$, \cite{Masjed}. Here, $\delta_{k,s}$ is
the Kronecker's delta.\bigskip

The general method for deriving 2D biorthogonal polynomials is given by
\"{O}zarslan and Elidemir in the theorem below \cite{OzEl}:

\begin{theorem}
Assume that $D_{s}\left(  z\right)  $ and $F_{k}\left(  z\right)  $ are
biorthogonal polynomials with respect to the fundamental polynomials $d\left(
z\right)  $ and $f\left(  z\right)  $, respectively, and the weight function
$\rho_{2}\left(  z\right)  $ over the interval $\left(  \beta_{1},\beta
_{2}\right)  $. That is, they have the biorthogonality relation%
\[
\int\limits_{\beta_{1}}^{\beta_{2}}\rho_{2}\left(  z\right)  D_{s}\left(
z\right)  F_{k}\left(  z\right)  dz=J_{k,s}:=\left\{
\genfrac{}{}{0pt}{}{0,\ k\neq s}{J_{k,k},\ \ \ k=s}%
\right.  .
\]
Also, let%
\[
K_{k}\left(  z\right)  =\sum\limits_{i=0}^{k}E_{k,i}\left(  d\left(  z\right)
\right)  ^{i}%
\]
with%
\[
\int\limits_{\alpha_{1}}^{\alpha_{2}}\rho_{1}\left(  z\right)  K_{s}\left(
z\right)  K_{k}\left(  z\right)  dz=\left\Vert K_{k}\right\Vert ^{2}%
\delta_{k,s}.
\]
Then, for the weight function $\rho_{1}\left(  z\right)  \rho_{2}\left(
t\right)  $, the bivariate polynomials%
\begin{equation}
P_{k}\left(  z,t\right)  =\sum\limits_{r=0}^{k}\frac{E_{k,r}}{J_{k-r,k-r}%
}\left(  d\left(  z\right)  \right)  ^{r}D_{k-r}\left(  t\right)  \label{PolP}%
\end{equation}
and%
\begin{equation}
Q_{k}\left(  z,t\right)  =K_{k}\left(  z\right)  \sum\limits_{j=0}^{k}%
F_{j}\left(  t\right)  \label{PolQ}%
\end{equation}
are biorthogonal on $\left(  \alpha_{1},\alpha_{2}\right)  \times\left(
\beta_{1},\beta_{2}\right)$, \cite{OzEl}.
\end{theorem}

\section{The finite bivariate biorthogonal I - Konhauser polynomials}

In the present section, we first introduce the finite biorthogonal I - Konhauser
polynomials$\ _{K}I_{k;\upsilon}^{\left(  p,q\right) }\left( z,t\right)  $
via Konhauser polynomials and the finite univariate orthogonal polynomials
$I_{k}^{\left(  p\right)  }\left(  t\right)  $. Then, we present the bivariate
I - Konhauser Mittag-Leffler functions $E_{q,\upsilon}^{\left(  \gamma
_{1};\gamma_{2};\gamma_{3};\gamma_{4}\right)  }\left(  z,t\right)  $.

\begin{definition}
The definiton of the finite I - Konhauser polynomials in two variables is
given by the formula%
\begin{equation}
\ _{K}I_{k;\upsilon}^{\left(  p,q\right)  }\left(  z,t\right) =\left(-1\right)^{k}\left(
1-p\right)_{k}\sum_{l=0}^{\left[  k/2\right]
}\sum_{n=0}^{k-l}\frac{\left(  -1\right)  ^{l}\left(  -k\right)  _{2l}\left(
l-k\right)  _{n}\left(  2z\right)  ^{k-2l}t^{\upsilon n}}{\left(  p-k\right)
_{l}\Gamma\left(  \upsilon n+q+1\right)  n!l!}, \label{IKdef}%
\end{equation}
where $\upsilon=1,2,...$ and $q>-1$, $p>\max\left\{  k\right\}  +1$.
\end{definition}

\begin{remark}
Under choices $q=0$ and $t=0$, the polynomials (\ref{IKdef}) reduce the polynomials
$I_{k}^{\left(  p\right)  }\left(  z\right)  $ defined in (\ref{Idef})\ for
$\upsilon=1$. That is%
\[
\ _{K}I_{k;1}^{\left(  p,0\right)  }\left(  z,0\right)  =I_{k}^{\left(
p\right)  }\left(  z\right)  .
\]

\end{remark}

\begin{theorem}
The polynomials (\ref{IKdef}) may be written in terms of polynomials
$Z_{k}^{\left(  \gamma\right)  }\left(  t;\upsilon\right)  $ given by
(\ref{Zdef})\ as follow%
\begin{equation}
\ _{K}I_{k;\upsilon}^{\left(  p,q\right)  }\left(  z,t\right)  =\left(
-1\right)  ^{k}\left(  1-p\right)  _{k}\sum_{l=0}^{\left[  k/2\right]  }%
\frac{\left(  -1\right)  ^{l}\left(  -k\right)  _{2l}\left(  k-l\right)
!\left(  2z\right)  ^{k-2l}Z_{k-l}^{\left(  q\right)  }\left(  t;\upsilon
\right)  }{\left(  p-k\right)  _{l}\Gamma\left(  q+\upsilon\left(  k-l\right)
+1\right)  l!}. \label{IZrel}%
\end{equation}

\end{theorem}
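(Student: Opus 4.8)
The plan is to start from the defining double sum (\ref{IKdef}) and collapse the inner summation over $m$ into a single Konhauser polynomial, leaving the outer sum over $l$ untouched. Since the factors $(1-p)_n/(-1)^n$, $(-1)^l(-n)_{2l}(2z)^{n-2l}$, and $1/\left[(p-n)_l\,l!\right]$ in (\ref{IKdef}) do not depend on the summation index $m$, the entire argument reduces to identifying the inner sum
\begin{equation*}
S_{n,l}(t):=\sum_{m=0}^{n-l}\frac{\left(-\left(n-l\right)\right)_{m}\,t^{\upsilon m}}{\Gamma\left(\upsilon m+q+1\right)\,m!}.
\end{equation*}

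The key step is to recognize $S_{n,l}(t)$ as a normalized Konhauser polynomial. First I would rewrite the Pochhammer symbol as a binomial coefficient via the elementary identity
\begin{equation*}
\frac{\left(-\left(n-l\right)\right)_{m}}{m!}=\left(-1\right)^{m}\binom{n-l}{m},
\end{equation*}
which follows directly from $(-(n-l))_m=(-1)^m (n-l)!/(n-l-m)!$. Substituting this into $S_{n,l}(t)$ yields exactly the summation appearing in the definition (\ref{Zdef}) of $Z_{n-l}^{(q)}(t;\upsilon)$, up to the normalization constant $\Gamma\left(\upsilon(n-l)+q+1\right)/(n-l)!$. Comparing the two expressions then gives
\begin{equation*}
S_{n,l}(t)=\frac{\left(n-l\right)!}{\Gamma\left(\upsilon\left(n-l\right)+q+1\right)}\,Z_{n-l}^{(q)}\left(t;\upsilon\right).
\end{equation*}

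Finally I would substitute this closed form for $S_{n,l}(t)$ back into (\ref{IKdef}). The factor $(n-l)!$ then appears in the numerator and the factor $\Gamma\left(\upsilon(n-l)+q+1\right)$ in the denominator of the $l$-th term, reproducing precisely the right-hand side of (\ref{IZrel}). No convergence or interchange-of-summation issues arise because all sums are finite. The only point demanding care is the bookkeeping of the Gamma-function and factorial normalizations: one must verify that the constant carried out of $S_{n,l}(t)$ matches exactly the $\Gamma\left(\upsilon(n-l)+q+1\right)$ and $(n-l)!$ factors displayed in (\ref{IZrel}), so I expect this matching of normalizing constants to be the main (though routine) obstacle rather than any structural difficulty.
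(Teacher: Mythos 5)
Your proposal is correct and takes essentially the same approach as the paper, whose entire proof is the one-line remark that comparing the series representations of $_{K}I_{n;\upsilon}^{\left( p,q\right) }\left( z,t\right)$ and $Z_{n}^{\left( \gamma\right) }\left( t;\upsilon\right)$ yields the result. Your write-up simply makes explicit what the paper leaves implicit: collapsing the inner $m$-sum via $\left( -\left( n-l\right) \right) _{m}/m!=\left( -1\right) ^{m}\binom{n-l}{m}$ and matching the normalization $\Gamma\left( \upsilon\left( n-l\right) +q+1\right) /\left( n-l\right) !$ coming from (\ref{Zdef}).
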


\begin{proof}
The series representations (\ref{IKdef}) and (\ref{Zdef}) completes the proof.
\end{proof}

The second set of the finite biorthogonal I - Konhauser polynomials is of form%
\begin{equation}
\ _{K}\mathcal{I}_{k;\upsilon}^{\left(  p,q\right)  }\left(  z,t\right)
=I_{k}^{\left(  p\right)  }\left(  z\right)  \sum\limits_{j=0}^{k}%
Y_{j}^{\left(  q\right)  }\left(  t;\upsilon\right)  , \label{Qdef}%
\end{equation}
where $I_{k}^{\left(  p\right)  }\left(  z\right)  $ is the finite orthogonal
polynomials defined by (\ref{Idef}) and $Y_{j}^{\left(  q\right)  }\left(
t;\upsilon\right)  $ is the second set of Konhauser polynomials in (\ref{Ydef}).

\begin{corollary}
By \textit{Theorem 3}, polynomials (\ref{IKdef}) and (\ref{Qdef}) constitute
the finite biorthogonal polynomial pair with respect to the weight function
$\rho\left(  z,t\right)  =\frac{e^{-t}t^{q}}{\left(  1+z^{2}\right)  ^{p-1/2}%
}$ over $\left(  -\infty,\infty\right)  \times\left(  0,\infty\right)  $.
\end{corollary}

\begin{theorem}
The polynomials $_{K}I_{k;\upsilon}^{\left(  p,q\right)  }\left(  z,t\right)
$ have the following finite biorthogonality relation%
\begin{align}
&  \int\limits_{-\infty}^{\infty}\int\limits_{0}^{\infty}\ _{K}I_{k;\upsilon
}^{\left(  p,q\right)  }\left(  z,t\right)  \ _{K}\mathcal{I}_{s;\upsilon
}^{\left(  p,q\right)  }\left(  z,t\right)  e^{-t}t^{q}\left(  1+z^{2}\right)
^{-\left(  p-1/2\right)  }dtdz  =\frac{k!2^{2\left(p-1\right)}  \Gamma^{2}\left(
p\right)  }{\left(  p-k-1\right)  \Gamma\left(  2p-k-1\right) }\delta_{k,s},\label{IKort}
\end{align}
under conditions $s,k=0,1,...,S<p-1$ and$\ q>-1$.
\end{theorem}

\begin{proof}
Without loss of generality, let $s \ge k$. Using representations (\ref{IZrel}) and (\ref{Qdef}), we get%
\begin{align*}
&  \int\limits_{-\infty}^{\infty}\int\limits_{0}^{\infty}e^{-t}t^{q}\left(
1+z^{2}\right)  ^{-\left(  p-1/2\right)  }\ _{K}I_{k;\upsilon}^{\left(
p,q\right)  }\left(  z,t\right)  \ _{K}\mathcal{I}_{s;\upsilon}^{\left(
p,q\right)  }\left(  z,t\right)  dzdt\\
&  =\frac{2^{k}\left(  1-p\right)  _{k}}{\left(  -1\right)  ^{k}}\sum
_{l=0}^{\left[  k/2\right]  }\frac{\left(  -1/4\right)  ^{l}\left(  -k\right)
_{2l}}{l!\left(  p-k\right)  _{l}}\int\limits_{-\infty}^{\infty}\left(
1+z^{2}\right)  ^{-\left(  p-1/2\right)  }z^{k-2l}I_{s}^{\left(  p\right)
}\left(  z\right)  dz\\
&  \times\frac{\left(  k-l\right)  !}{\Gamma\left(  \upsilon\left(
k-l\right)  +q+1\right)  }\int\limits_{0}^{\infty}\sum_{j=0}^{s}Y_{j}^{\left(
q\right)  }\left(  t;\upsilon\right)  Z_{k-l}^{\left(  q\right)  }\left(
t;\upsilon\right)  e^{-t}t^{q}dt\\
&  =\int\limits_{-\infty}^{\infty}\left(  1+z^{2}\right)  ^{-\left(
p-1/2\right)  }I_{k}^{\left(  p\right)  }\left(  z\right)  I_{s}^{\left(
p\right)  }\left(  z\right)  dz.
\end{align*}
From (\ref{Iort}), the proof is finished.
\end{proof}

The double hypergeometric functions \cite{SD} is defined as follow%

\begin{equation*}
F_{k,s,r}^{m,p,q}\left(
\genfrac{}{}{0pt}{}{\left\{  a_{j}\right\}  _{j=1}^{m}:\left\{  b_{j}\right\}
_{j=1}^{p};\left\{  c_{j}\right\}  _{j=1}^{q};}{\left\{  d_{j}\right\}
_{j=1}^{k}:\left\{  f_{j}\right\}  _{j=1}^{s};\left\{  g_{j}\right\}
_{j=1}^{r};}%
z;t\right)  =\sum\limits_{n,l=0}^{\infty}\frac{\prod\limits_{j=1}^{m}\left(
a_{j}\right)  _{l+n}\prod\limits_{j=1}^{p}\left(  b_{j}\right)  _{n}%
\prod\limits_{j=1}^{q}\left(  c_{j}\right)  _{l}\ z^{l}t^{n}}{\prod
\limits_{j=1}^{k}\left(  d_{j}\right)  _{l+n}\prod\limits_{j=1}^{s}\left(
f_{j}\right)  _{n}\prod\limits_{j=1}^{r}\left(  g_{j}\right)  _{l}\ l!n!}.
\end{equation*}

\begin{theorem}
The polynomials $_{K}I_{k;\upsilon}^{\left(  p,q\right)  }\left(  z,t\right)
$ may be expressed with the help of the double hypergeometric functions%
\[
\ _{K}I_{k;\upsilon}^{\left(  p,q\right)  }\left(  z,t\right)  =\frac{\left(
1-p\right)  _{k}\left(  -2z\right)  ^{k}}{\Gamma\left(  q+1\right)
}F_{0,\upsilon,2}^{1,0,2}\left(
\genfrac{}{}{0pt}{}{-k:-;\Delta\left(  2;-k\right)  ;}{-:\Delta\left(
\upsilon;q+1\right)  ;-k,p-k;}%
\frac{-1}{z^{2}};\left(  \frac{t}{\upsilon}\right)  ^{\upsilon}\right) ,
\]
where $\Delta\left(  c;\alpha\right)  $ denotes the $c$ parameters
$\frac{\alpha}{c},\frac{\alpha+1}{c},...,\frac{\alpha+c-1}{c}$.
\end{theorem}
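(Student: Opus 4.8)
The plan is to start from the explicit double series \eqref{IKdef} and massage its two Pochhammer/Gamma ingredients into exactly the shape dictated by the Kampé de Fériet definition, reading off the parameters at the very end; three elementary identities carry the whole argument, after which everything is bookkeeping. The first, cosmetic, step is to tidy the global constant: the reflection rule $(1-p)_n=(-1)^n(p-n)_n$ turns the prefactor $\tfrac{(1-p)_n}{(-1)^n}$ into $(p-n)_n$, which already matches the Pochhammer part of the target prefactor. The surviving powers of $2$, of $z$, and the overall sign will be deferred to the first variable and the normalising constant.

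The decisive step is to remove the truncation of the inner sum. In \eqref{IKdef} the index $m$ runs only up to $n-l$ because of the factor $\bigl(-(n-l)\bigr)_m$, whereas a Kampé de Fériet series is summed over $\mathbb{N}_0^2$. Using the ratio identity $\bigl(-(n-l)\bigr)_m=\dfrac{(-n)_{m+l}}{(-n)_l}$ I would replace this truncated Pochhammer by the \emph{shared} Pochhammer $(-n)_{m+l}$ divided by $(-n)_l$; since $(-n)_{m+l}$ vanishes as soon as $m+l>n$, both indices may now be extended to infinity without altering the value. This is precisely what produces the single mixed-index numerator parameter $-n$ (the entry $(a_n)$, index $m+l$) together with the denominator parameter $(-n)_l$, which is one of the two first-variable lower parameters $-n,\,p-n$; the other, $(p-n)_l$, is already present in \eqref{IKdef}.

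Finally I would dispose of the two remaining clusters by the Gauss–Legendre multiplication formula $\prod_{j=0}^{\upsilon-1}\bigl(\tfrac{\sigma+j}{\upsilon}\bigr)_k=\dfrac{(\sigma)_{\upsilon k}}{\upsilon^{\upsilon k}}$, i.e. $[\Delta(\upsilon;\sigma)]_k=(\sigma)_{\upsilon k}/\upsilon^{\upsilon k}$. Writing $\Gamma(\upsilon m+q+1)=\Gamma(q+1)\,(q+1)_{\upsilon m}=\Gamma(q+1)\,\upsilon^{\upsilon m}\,[\Delta(\upsilon;q+1)]_m$ moves $[\Delta(\upsilon;q+1)]_m$ into the denominator (the $\upsilon$ lower parameters attached to the second variable), extracts the overall $1/\Gamma(q+1)$, and pairs the leftover $\upsilon^{-\upsilon m}$ with $t^{\upsilon m}$ to assemble the second variable $\bigl(\tfrac{t}{\upsilon}\bigr)^{\upsilon}$. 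Applying the same formula with $\upsilon=2,\ \sigma=-n$ rewrites $(-n)_{2l}=2^{2l}[\Delta(2;-n)]_l$, producing the two upper first-variable parameters $\Delta(2;-n)$. Comparison with the Kampé de Fériet definition then fixes the superscript/subscript pattern $F^{1,0,2}_{0,\upsilon,2}$ and all listed parameters.

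The main obstacle is not any single identity but the combined numerical bookkeeping of this last step: one must track the powers of $2$ and of $\upsilon$ released by the two multiplication formulas and verify that they cancel cleanly against $(2z)^{n-2l}$, against $t^{\upsilon m}$, and against the explicit $\upsilon$ in the second argument, so that the residual constant collapses exactly to the stated prefactor and the first variable reduces to the claimed power of $z$. This is also where the sign conventions and the precise normalisation of the first argument must be pinned down, and it is the only place where a careful, rather than routine, computation is required.
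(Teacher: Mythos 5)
Your strategy is the same as the paper's: the paper's entire proof is one sentence invoking the multiplication identity $\left( 1+\gamma\right) _{\upsilon r}=\upsilon^{\upsilon r}\prod_{j=0}^{\upsilon-1}\left( \frac{\gamma+1+j}{\upsilon}\right) _{r}$ in the definition (\ref{IKdef}), and your three ingredients are exactly what is needed to turn that sentence into an argument. Each of them is correct: $\left( 1-p\right) _{n}=\left( -1\right) ^{n}\left( p-n\right) _{n}$; the un-truncation identity $\left( -\left( n-l\right) \right) _{m}=\left( -n\right) _{m+l}/\left( -n\right) _{l}$ (legitimate since $\left( -n\right) _{l}\neq0$ for $l\leq\left[ n/2\right] $, while the vanishing of $\left( -n\right) _{m+l}$ and $\left( -n\right) _{2l}$ restores both truncations when the sums are extended to infinity); and the multiplication formula applied to $\left( q+1\right) _{\upsilon m}$ and to $\left( -n\right) _{2l}$. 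Up to that point your write-up is in fact more complete than the paper's.

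The gap is that the one computation you explicitly postpone --- verifying that the powers of $2$ ``cancel cleanly'' and that ``the residual constant collapses exactly to the stated prefactor'' --- is precisely the step that does \emph{not} come out as claimed, because the displayed formula is inconsistent with (\ref{IKdef}). Writing $\left[ \Delta\left( 2;-n\right) \right] _{l}$ for $\left( \frac{-n}{2}\right) _{l}\left( \frac{-n+1}{2}\right) _{l}$, each term of (\ref{IKdef}) carries
\begin{equation*}
\left( -1\right) ^{l}\left( 2z\right) ^{n-2l}\left( -n\right) _{2l}=\left(
2z\right) ^{n}\cdot\frac{\left( -1\right) ^{l}}{4^{l}z^{2l}}\cdot
4^{l}\left[ \Delta\left( 2;-n\right) \right] _{l}=\left( 2z\right) ^{n}
\left[ \Delta\left( 2;-n\right) \right] _{l}\left( \frac{-1}{z^{2}}\right)
^{l},
\end{equation*}
so the two factors $4^{\pm l}$ cancel identically and the first argument of the Kamp\'{e} de F\'{e}riet function must be $\frac{-1}{z^{2}}$, not $\frac{-1}{4z^{2}}$; moreover the prefactor is $\frac{\left( 1-p\right) _{n}}{\left( -1\right) ^{n}}\left( 2z\right) ^{n}=\left( p-n\right) _{n}\left( 2z\right) ^{n}=\left( 1-p\right) _{n}\left( -2z\right) ^{n}$, which differs by $\left( -1\right) ^{n}$ from the printed $\left( p-n\right) _{n}\left( -2z\right) ^{n}$. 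Concretely, at $n=1$, $\upsilon=1$, (\ref{IKdef}) gives $\left( p-1\right) 2z\left( \frac{1}{\Gamma\left( q+1\right) }-\frac{t}{\Gamma\left( q+2\right) }\right) $ while the displayed formula gives its negative; at $n=2$ the $l=1$ contribution of the displayed formula is four times too small. (The paper's own analogue for the modified polynomials, (\ref{MIkampe}), indeed has first argument $\frac{-1}{z^{2}}$, confirming the diagnosis.) So your method is the right one, and identical in substance to the paper's, but the deferred bookkeeping cannot end at the stated formula: carried out honestly it proves the corrected statement with argument $\frac{-1}{z^{2}}$ and prefactor $\left( 1-p\right) _{n}\left( -2z\right) ^{n}$, and a proof that instead asserts agreement with the printed one is wrong at exactly the point you left open.
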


\begin{proof}
Using $\left(  1+\gamma\right)  _{\upsilon s}=\upsilon^{\upsilon s}%
\prod\limits_{j=0}^{\upsilon-1}\left(  \frac{\gamma+1+j}{\upsilon}\right)
_{s}$ in definition (\ref{IKdef}), we have the desired.
\end{proof}

\bigskip

Recently, there are lots of investigations on the bivariate variants of
Mittag-Leffler functions \cite{KFO,OF,EOB,Ozarslan,IFO,FKO,OF2}. For example,
in \cite{KFO} two unified families gathering known several and various
Mittag-Leffler functions in two variables were presented. In \cite{EOB},
studying the fractional integral and derivative operators with kernels
including the Mittag-Leffler functions in two variables, specific fractional
Cauchy-type problems comprising these operators were discussed. Also, an
analog of bivariate Mittag-Leffler functions was defined in \cite{FKO}. As
another example, Ozarslan presents a singular integral equation in the kernel
and obtained the solution in terms of the multivariate Mittag-Leffler
functions \cite{Ozarslan}. On the other hand, taking into consideration the
multivariate Mittag-Leffler function in the kernel \cite{OF2}, the authors
investigated the fractional calculus properties with the aid of series approach.

\bigskip

In the present paper, let us define the I - Konhauser Mittag-Leffler functions
$E_{\gamma_{3},\gamma_{4};q;\upsilon}^{\left(  \gamma_{1};\gamma_{2}\right)
}\left(  z,t\right)  $ corresponding to the polynomials $_{K}I_{k;\upsilon
}^{\left(  p,q\right)  }\left(  z,t\right)  $, and give the relation between them.

\begin{definition}
The I - Konhauser Mittag-Leffler functions $E_{\gamma_{3},\gamma_{4};q;\upsilon}^{\left(  \gamma_{1};\gamma_{2}\right)
}\left(  z,t\right)  $ is
defined by%
\begin{equation}
E_{\gamma_{3},\gamma_{4};q;\upsilon}^{\left(  \gamma_{1};\gamma_{2}\right)
}\left(  z,t\right)  =\sum_{l=0}^{\infty}\sum_{n=0}^{\infty}\frac{\left(
-1\right)  ^{n}\left(  \gamma_{1}\right)  _{2n}\left(  \gamma_{2}\right)
_{l+n}z^{n}t^{\upsilon l}}{\left(  \gamma_{3}\right)  _{n}\left(  \gamma
_{4}\right)  _{n}\Gamma\left(  q+\upsilon l\right)  n!l!} \label{Edef}%
\end{equation}
for $q,\upsilon,\gamma_{1},\gamma_{2},\gamma_{3},\gamma_{4}\in%
\mathbb{C},
\operatorname{Re}
\left(  q\right)  >0, \operatorname{Re}\left(  \upsilon\right)  >0$ and $\gamma_{3},\gamma_{4}\notin \mathbb{Z}^-\cup\{0\}.$
\end{definition}

\begin{remark}
The series in (\ref{Edef}) is a special case of the function given in
\cite{SD}. According to the convergence condition stated in \cite{SD}, the
series in (\ref{Edef}) converges for $-\frac{1}{4}<z<\frac{1}{4}$
and$\ -\infty<t<\infty$ thanks to the above conditions.
\end{remark}

By taking $z=0$ or $\gamma_{1}=0$ in (\ref{Edef}), we
write%
\[
E_{\gamma_{3},\gamma_{4};q;\upsilon}^{\left(  \gamma_{1};\gamma_{2}\right)
}\left(  0,t\right)=E_{\gamma_{3},\gamma_{4};q;\upsilon}^{\left(  0;\gamma_{2}\right)
}\left(  z,t\right)
=E_{\upsilon,q}^{\left(  \gamma_{2}\right)  }\left(  t^{\upsilon}\right)  ,
\]
where%
\begin{equation}
E_{p,q}^{\left(  \gamma\right)  }\left(  t\right)  =\sum_{l=0}^{\infty}%
\frac{\left(  \gamma\right)  _{l}\ t^{l}}{l!\ \Gamma\left(  pl+q\right) }\label{E2def}
\end{equation}
is the Mittag-Leffler function introduced by Prabhakar \cite{Prab} for
$\operatorname{Re}\left(  p\right)  >0,\operatorname{Re}\left(  q\right)
>0$ and $p,q,\gamma\in%
\mathbb{C}
$.

On the other hand, if we specially choose $\gamma_{3}=\frac{\gamma_{1}}{2},\gamma_{4}=\frac{\gamma_{1}+1}{2}$ and $t=0$, we get
\[
E_{\frac{\gamma_{1}}{2},\frac{\gamma_{1}+1}{2};q;\upsilon}^{\left(  \gamma_{1};\gamma_{2}\right)
}\left(  z,0\right)  =
\frac{1}{\Gamma\left(  q\right) } \left(  1+4z\right)^{-\gamma_{2}}.
\]

\begin{corollary}
There exist the relation%
\begin{equation}
_{K}I_{k;\upsilon}^{\left(  p,q\right)  }\left(  z,t\right)  =\left(
1-p\right)  _{k}\left(  -2z\right)  ^{k}E_{-k,p-k;q+1;\upsilon}^{\left(
-k;-k\right)  }\left(  \frac{1}{4z^{2}},t\right)  \label{IErel}%
\end{equation}
between the I - Konhauser polynomials and the corresponding Mittag-Leffler function.

\begin{proof}
After choosing $\gamma_{2}=\gamma_{3}$ and then $\gamma_{2}=-k$ in the definition (\ref{Edef}),
we compare the obtained with the definition.
\end{proof}
\end{corollary}
\bigskip
\subsection{Integral and operational representations of the bivariate I -
Konhauser Mittag-Leffler functions and the finite bivariate I - Konhauser
polynomials}

In this subsection the integral and operational representations of the
bivariate I - Konhauser Mittag-Leffler functions $E_{\gamma_{3},\gamma_{4};q;\upsilon}^{\left(  \gamma_{1};\gamma_{2}\right)
}\left(  z,t\right)  $ are
obtained and, using these results, we derive the operational and integral
representations for polynomials $_{K}I_{k;\upsilon}^{\left(  p,q\right)
}\left(  z,t\right)  $.

We know that%
\[
D_{t}^{-1}f\left(  t\right)  =\int\limits_{0}^{t}f\left(  \xi\right)  d\xi.
\]

\begin{theorem}
For the bivariate I - Konhauser Mittag-Leffler functions $E_{\gamma_{3},\gamma_{4};q;\upsilon}^{\left(  \gamma_{1};\gamma_{2}\right)
}\left(
z,t\right)  $ the following operational representation holds:%
\begin{equation}
E_{\gamma_{3},\theta;q;\upsilon}^{\left(  \gamma_{1};\gamma_{2}\right)
}\left(  z,t\right)  =\frac{z^{1-\theta}t^{1-q}}{\left(  1-D_{t}^{-\upsilon
}\right)  ^{\gamma_{2}}}\ _{3}F_{1}\left[
\genfrac{}{}{0pt}{}{\frac{\gamma_{1}}{2},\frac{\gamma_{1}+1}{2},\gamma
_{2}}{\gamma_{3}}%
;\frac{-4}{D_{z}\left(  1-D_{t}^{-\upsilon}\right)  }\right]  \left\{
\frac{z^{\theta-1}t^{q-1}}{\Gamma\left(  q\right)  }\right\}  , \label{Eoprel}%
\end{equation}
where $_{3}F_{1}$ is the special case of the generalized hypergeometric functions.
\end{theorem}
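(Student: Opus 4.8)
The plan is to read the right-hand side as a composition of operators acting on the seed function $g(z,t)=\dfrac{z^{\theta-1}t^{q-1}}{\Gamma(q)}$, to evaluate that action explicitly, and to check that the resulting double series is term-by-term the defining series (\ref{Edef}) with $\gamma_{4}=\theta$. First I would expand the two operator factors as power series in the basic operators $D_{z}^{-1}$ and $D_{t}^{-\upsilon}$. Writing
\begin{equation*}
{}_{3}F_{1}\!\left[\ldots;\tfrac{-4}{D_{z}(1-D_{t}^{-\upsilon})}\right]
=\sum_{k=0}^{\infty}\frac{\left(\tfrac{\gamma_{1}}{2}\right)_{k}\left(\tfrac{\gamma_{1}+1}{2}\right)_{k}(\gamma_{2})_{k}}{(\gamma_{3})_{k}\,k!}\left(\tfrac{-4}{D_{z}(1-D_{t}^{-\upsilon})}\right)^{k},
\end{equation*}
I would apply the duplication identity $4^{k}\left(\tfrac{\gamma_{1}}{2}\right)_{k}\left(\tfrac{\gamma_{1}+1}{2}\right)_{k}=(\gamma_{1})_{2k}$, so that the $k$-th term becomes $\dfrac{(-1)^{k}(\gamma_{1})_{2k}(\gamma_{2})_{k}}{(\gamma_{3})_{k}\,k!}\,D_{z}^{-k}(1-D_{t}^{-\upsilon})^{-k}$, and expand the outer factor by the binomial series $(1-D_{t}^{-\upsilon})^{-\gamma_{2}}=\sum_{j\geq0}\tfrac{(\gamma_{2})_{j}}{j!}D_{t}^{-\upsilon j}$.

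Next I would compute the elementary actions on the seed. Since $D_{z}^{-1}$ and $D_{t}^{-1}$ are the integration operators and $D_{t}^{-\upsilon}$ the associated fractional integral, the power rule gives
\begin{equation*}
D_{z}^{-k}z^{\theta-1}=\frac{\Gamma(\theta)}{\Gamma(\theta+k)}z^{\theta+k-1}=\frac{z^{\theta+k-1}}{(\theta)_{k}},\qquad
D_{t}^{-\upsilon j}t^{q-1}=\frac{\Gamma(q)}{\Gamma(q+\upsilon j)}t^{q+\upsilon j-1},
\end{equation*}
the first of which produces the factor $(\theta)_{k}$ in the denominator of (\ref{Edef}). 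The structural step is that the outer $(1-D_{t}^{-\upsilon})^{-\gamma_{2}}$ commutes with $D_{z}^{-k}$ and with the scalar coefficients, hence merges with the inner $(1-D_{t}^{-\upsilon})^{-k}$ into $(1-D_{t}^{-\upsilon})^{-(\gamma_{2}+k)}$; applying its binomial expansion to $t^{q-1}$ yields $\sum_{m\geq0}\tfrac{(\gamma_{2}+k)_{m}}{m!}\tfrac{\Gamma(q)}{\Gamma(q+\upsilon m)}t^{q+\upsilon m-1}$. Multiplying by the prefactor $z^{1-\theta}t^{1-q}$ (which is applied last) collapses the monomials to $z^{k}t^{\upsilon m}$ and cancels $\Gamma(q)$, and finally the Pochhammer identity $(\gamma_{2})_{k}(\gamma_{2}+k)_{m}=(\gamma_{2})_{k+m}$ fuses the two symbols into $(\gamma_{2})_{k+m}$; relabelling $k\to l$ reproduces (\ref{Edef}) exactly.

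The main obstacle is not the algebra but the justification of the operator calculus: one must verify that the $_{3}F_{1}$ and binomial series may be applied term-by-term to the power functions, that the resulting multiple sums may be rearranged, and that the two factors of $(1-D_{t}^{-\upsilon})$ genuinely combine. Here the convergence range $-\tfrac14<z<\tfrac14$ recorded in the Remark following (\ref{Edef}) (equivalently the ratio-test limit $4z$ for the $l$-series) is what guarantees absolute convergence of the $z$-expansion and legitimises the interchange of summations, while the rapid growth of $\Gamma(q+\upsilon m)$ secures convergence of the $t$-expansion for all $t$.
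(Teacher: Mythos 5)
Your proof is correct and takes essentially the same route as the paper's (much terser) proof: the paper simply invokes the two power rules $D_{z}^{-l}z^{p-1}=\frac{z^{p+l-1}}{(p)_{l}}$ and $D_{t}^{-\upsilon m}t^{q-1}=\frac{\Gamma(q)\,t^{q+\upsilon m-1}}{\Gamma(q+\upsilon m)}$ and states that the defining series (\ref{Edef}) "is rearranged," which is exactly the computation you carry out in detail (expansion of the $_{3}F_{1}$ operator series, the duplication identity $4^{k}\left(\tfrac{\gamma_{1}}{2}\right)_{k}\left(\tfrac{\gamma_{1}+1}{2}\right)_{k}=(\gamma_{1})_{2k}$, merging the two binomial factors of $(1-D_{t}^{-\upsilon})$, and fusing $(\gamma_{2})_{k}(\gamma_{2}+k)_{m}=(\gamma_{2})_{k+m}$). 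Your additional discussion justifying the term-by-term application of the operator series and the rearrangement of the double sum goes beyond what the paper records, but it is the same argument.
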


\begin{proof}
Taking into account that $D_{z}^{-n}z^{p-1}=\frac{z^{p+n-1}}{\left(  p\right)
_{n}}$ and $D_{t}^{-\upsilon l}t^{q-1}=\frac{\Gamma\left(  q\right)
t^{q+\upsilon l-1}}{\Gamma\left(  q+\upsilon l\right)  }$, the definition
(\ref{Edef}) is rearranged.
\end{proof}

\begin{corollary}
Polynomials $_{K}I_{k;\upsilon}^{\left(  p,q\right)  }\left(  z,t\right)  $
have the property%
\begin{align*}
\ _{K}I_{k;\upsilon}^{\left(  p,q\right)  }\left(  z,t\right)  =\frac{2^{k}\left(
1-p\right)  _{k}\left(  D_{t}^{-\upsilon}-1\right)  ^{k}}{z^{k-2p+2}t^{q}}%
\ \ \ \ \ \ \ \ \ \ \ \ \ \ \ \ \ \ \ \ \ \ \ \ \ \ \ \ \ \ \ \ \ \ \ \  & \\
\times\ _{2}F_{0}\left[ \frac{-k}{2},\frac{-k+1}{2};-;\frac{2}{z^{3}%
D_{z}\left(  1-D_{t}^{-\upsilon}\right)  }\right]  \left\{  \frac{z^{2\left(
k+1-p\right)  }t^{q}}{\Gamma\left(  q+1\right)  }\right\}  .  &
\end{align*}

\end{corollary}

\begin{proof}
The proof follows by (\ref{IErel}) and (\ref{Eoprel}).
\end{proof}

\bigskip

Now, let's recall the following representations of the Gamma function and the
incomplete Gamma function \cite{Erdelyi, WW}, respectively:%
\begin{equation}
\Gamma\left(  y\right)  =\int\limits_{0}^{\infty}e^{-u}u^{y-1}%
du,\ \ \ \ \ \left(  \operatorname{Re}\left(  y\right)  >0\right)
\label{Gamma}%
\end{equation}
and%
\begin{equation}
\frac{1}{\Gamma\left(  y\right)  }=\frac{1}{2\pi i}\int\limits_{-\infty
}^{0^{+}}e^{u}u^{-y}du,\ \ \ \ \ \left(  \left\vert \arg\left(  u\right)
\right\vert \leq\pi\right)  . \label{PGamma}%
\end{equation}

\begin{theorem}
For parameters $\gamma_{1},\gamma_{3},\gamma_{4}$ with positive real parts, the bivariate I - Konhauser Mittag-Leffler functions have the integral
representation as follow%
\begin{align}
E_{\gamma_{3},\gamma_{4};q;\upsilon}^{\left(  \gamma_{1};\gamma_{2}\right)
}\left(  z,t\right)   &  =-\frac{\Gamma\left(  \gamma_{3}\right)
\Gamma\left(  \gamma_{4}\right)  }{8i\pi^{3}\Gamma\left(  \gamma_{1}\right)
}\int\limits_{0}^{\infty}\int\limits_{-\infty}^{0^{+}}\int\limits_{-\infty
}^{0^{+}}\int\limits_{-\infty}^{0^{+}}e^{-u_{1}+w_{1}+w_{2}+w_{3}}%
u_{1}^{\gamma_{1}-1}w_{1}^{-q}w_{2}^{-\gamma_{3}}w_{3}^{-\gamma_{4}%
}\label{Eintrep1}\\
&  \times\left(  \frac{w_{1}^{\upsilon}-t^{\upsilon}}{w_{1}^{\upsilon}}%
+\frac{u_{1}^{2}z}{w_{2}w_{3}}\right)  ^{-\gamma_{2}}dw_{3}dw_{2}dw_{1}%
du_{1}.\nonumber
\end{align}

\end{theorem}

\begin{proof}
Using (\ref{Gamma}) and (\ref{PGamma}), we obtain (\ref{Eintrep1}).
\end{proof}

\begin{corollary}
Polynomials $_{K}I_{k;\upsilon}^{\left(  p,q\right)  }\left(  z,t\right)  $
have the integral representation as follow%
\begin{align*}
_{K}I_{k;\upsilon}^{\left(  p,q\right)  }\left(  z,t\right)   &
=\frac{ \sqrt{\pi}  z^{k} \ \Gamma\left(  p\right) }{16\pi^{4}}\int\limits_{-\infty}^{0^{+}} \int\limits_{-\infty}^{0^{+}}%
\int\limits_{-\infty}^{0^{+}}\int\limits_{-\infty}^{0^{+}}\int\limits_{0}^{\infty}e^{-u_{1}+w_{1}+w_{2}%
+w_{3}+w_{4}}u_{1}^{k}w_{1}^{-q-1}
\label{Iintrep}\\
&  \times w_{2}^{-\frac{k}{2}-1}w_{3}^{-\frac{k+1}{2}}w_{4}^{k-p}\left( \frac{w_{1}^{\upsilon
}-t^{\upsilon}}{w_{1}^{\upsilon}}- \frac{w_{2}w_{3}}{w_{4}u_{1}z^{2}}\right)  ^{k}du_{1}dw_{1}dw_{2}dw_{3}dw_{4}%
.\nonumber
\end{align*}

\end{corollary}

\begin{proof}
For the proof, (\ref{Gamma}) and (\ref{PGamma}) are used.
\end{proof}
\bigskip
\subsection{Fractional calculus operators and Laplace transform for the
bivariate I - Konhauser Mittag-Leffler functions and the finite bivariate I -
Konhauser polynomials}

In this subsection, Laplace transform,
fractional integral and fractional
derivative operators of $E_{\gamma_{3},\gamma_{4};q;\upsilon}^{\left(  \gamma_{1};\gamma_{2}%
\right)  }\left(  z,t\right)  $ and thus, of
$_{K}I_{k;\upsilon}^{\left(  p,q\right)  }\left(  z,t\right)  $ are computed.

The Laplace transform for a function $h$ in one variable is defined as follow \cite{KG}%
\begin{equation}
L[h]\left(  a\right)  =\int\limits_{0}^{\infty}e^{-a\xi}h\left(  \xi\right)
d\xi,\ \operatorname{Re}\left( a\right)  >0. \label{LaplaceDef}%
\end{equation}

\begin{theorem}
The Laplace transform of $E_{\gamma_{3},\gamma_{4};q;\upsilon}^{\left(  \gamma_{1};\gamma_{2}%
\right)  }\left(  z,t\right)  $ is given by%
\begin{equation}
L\left\{  t^{q-1}E_{\gamma_{3},\gamma_{4};q;\upsilon}^{\left(  \gamma_{1};\gamma_{2}%
\right)  }\left(  z,wt\right)  \right\}  =\frac{1}{a^{q}%
}\left(  \frac{a^{\upsilon}-w^{\upsilon}}{a^{\upsilon}}\right)  ^{-\gamma_{2}%
}\ _{3}F_{2}\left[
\genfrac{}{}{0pt}{}{\frac{\gamma_{1}}{2},\frac{\gamma_{1}+1}{2},\gamma
_{2}}{\gamma_{3},\gamma_{4}}%
;\frac{-4za^{\upsilon}}{a^{\upsilon}-w^{\upsilon}}\right]  \label{Elap}%
\end{equation}
for $\left\vert \frac{w^{\upsilon}}{a^{\upsilon}}\right\vert <1$.
\end{theorem}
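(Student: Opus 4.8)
The plan is to transform the defining double series of $E_{q,\upsilon}^{(\gamma_1;\gamma_2;\gamma_3;\gamma_4)}$ termwise and then collapse the two sums into a single $_3F_2$. First I would replace $t$ by $wt$ in the series (\ref{Edef}) and multiply by $t^{q-1}$, so that the $t$-dependence of every term sits in a single monomial:
\begin{equation*}
t^{q-1}E_{q,\upsilon}^{(\gamma_1;\gamma_2;\gamma_3;\gamma_4)}(z,wt)=\sum_{m=0}^{\infty}\sum_{l=0}^{\infty}\frac{(-1)^{l}(\gamma_1)_{2l}(\gamma_2)_{m+l}z^{l}w^{\upsilon m}}{(\gamma_3)_{l}(\gamma_4)_{l}\Gamma(q+\upsilon m)\,m!\,l!}\,t^{q+\upsilon m-1}.
\end{equation*}
Applying (\ref{LaplaceDef}), with transform variable $a$, to each monomial and using $\int_{0}^{\infty}e^{-at}t^{q+\upsilon m-1}\,dt=\Gamma(q+\upsilon m)/a^{q+\upsilon m}$ for $\func{Re}(a)>0$, the factor $\Gamma(q+\upsilon m)$ cancels against the one in the denominator. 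Pulling out $a^{-q}$ yields
\begin{equation*}
L\left\{t^{q-1}E_{q,\upsilon}^{(\gamma_1;\gamma_2;\gamma_3;\gamma_4)}(z,wt)\right\}=\frac{1}{a^{q}}\sum_{l=0}^{\infty}\frac{(-1)^{l}(\gamma_1)_{2l}z^{l}}{(\gamma_3)_{l}(\gamma_4)_{l}\,l!}\sum_{m=0}^{\infty}\frac{(\gamma_2)_{m+l}}{m!}\left(\frac{w^{\upsilon}}{a^{\upsilon}}\right)^{m}.
\end{equation*}

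The crux is the inner $m$-sum. Writing $(\gamma_2)_{m+l}=(\gamma_2)_{l}(\gamma_2+l)_{m}$ and invoking the binomial series $\sum_{m\geq0}(\gamma_2+l)_{m}x^{m}/m!=(1-x)^{-\gamma_2-l}$ with $x=w^{\upsilon}/a^{\upsilon}$, which converges precisely under the hypothesis $\left\vert w^{\upsilon}/a^{\upsilon}\right\vert<1$, the inner sum becomes $(\gamma_2)_{l}\,(1-w^{\upsilon}/a^{\upsilon})^{-\gamma_2-l}$. I would then factor out $(1-w^{\upsilon}/a^{\upsilon})^{-\gamma_2}=\left((a^{\upsilon}-w^{\upsilon})/a^{\upsilon}\right)^{-\gamma_2}$ and apply the Pochhammer duplication identity $(\gamma_1)_{2l}=4^{l}\left(\tfrac{\gamma_1}{2}\right)_{l}\left(\tfrac{\gamma_1+1}{2}\right)_{l}$. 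Collecting the $l$-dependent factors into $\left(-4za^{\upsilon}/(a^{\upsilon}-w^{\upsilon})\right)^{l}$ reduces the remaining $l$-sum to
\begin{equation*}
\sum_{l=0}^{\infty}\frac{\left(\tfrac{\gamma_1}{2}\right)_{l}\left(\tfrac{\gamma_1+1}{2}\right)_{l}(\gamma_2)_{l}}{(\gamma_3)_{l}(\gamma_4)_{l}\,l!}\left(\frac{-4za^{\upsilon}}{a^{\upsilon}-w^{\upsilon}}\right)^{l}={}_{3}F_{2}\left[\QATOP{\frac{\gamma_1}{2},\frac{\gamma_1+1}{2},\gamma_2}{\gamma_3,\gamma_4};\frac{-4za^{\upsilon}}{a^{\upsilon}-w^{\upsilon}}\right],
\end{equation*}
which is exactly the factor in (\ref{Elap}).

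The algebra above is routine; the point requiring care is the justification of the termwise transformation, i.e. the interchange of the double summation with the integral in (\ref{LaplaceDef}). I expect this to be the main (if minor) obstacle: one must verify the dominated/uniform convergence that legitimizes the swap, which follows from the growth restrictions on $\gamma_1,\gamma_2,\gamma_3,\gamma_4,q,\upsilon$ imposed in the definition (\ref{Edef}) together with the standing hypothesis $\left\vert w^{\upsilon}/a^{\upsilon}\right\vert<1$, the same condition that secures convergence of the binomial series used to evaluate the inner sum. Once the interchange is granted, the computation proceeds exactly as sketched and delivers (\ref{Elap}).
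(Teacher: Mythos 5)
Your proposal is correct and follows essentially the same route as the paper's own proof: termwise Laplace transformation of the double series (with the $\Gamma(q+\upsilon m)$ cancellation), the factorization $(\gamma_{2})_{m+l}=(\gamma_{2})_{l}(\gamma_{2}+l)_{m}$ with the binomial series for the inner $m$-sum, the duplication identity $(\gamma_{1})_{2l}=4^{l}\left(\tfrac{\gamma_{1}}{2}\right)_{l}\left(\tfrac{\gamma_{1}+1}{2}\right)_{l}$, and recognition of the resulting $l$-sum as the $_{3}F_{2}$. The paper likewise justifies the sum--integral interchange by appealing to the uniform convergence of $E_{q,\upsilon}^{\left(\gamma_{1};\gamma_{2};\gamma_{3};\gamma_{4}\right)}\left(z,t\right)$, exactly the point you flag.
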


\begin{proof}
Direct calculation yield%
\begin{align*}
&  L\left\{  t^{q-1}E_{\gamma_{3},\gamma_{4};q;\upsilon}^{\left(  \gamma_{1};\gamma_{2}%
\right)  }\left(  z,wt\right)  \right\}  =\int\limits_{0}%
^{\infty}e^{-at}t^{q-1}\sum_{l=0}^{\infty}\sum_{n=0}^{\infty}\frac{\left(
-1\right)  ^{l}\left(  \gamma_{1}\right)  _{2l}\left(  \gamma_{2}\right)
_{n+l}z^{l}\left(  wt\right)  ^{\upsilon n}}{\left(  \gamma_{3}\right)
_{l}\left(  \gamma_{4}\right)  _{l}\Gamma\left(  q+\upsilon n\right)
n!l!}dt\\
&  =\frac{1}{a^{q}}\sum_{l=0}^{\infty}\frac{\left(  \frac{\gamma_{1}}%
{2}\right)  _{l}\left(  \frac{\gamma_{1}+1}{2}\right)  _{l}\left(  \gamma
_{2}\right)  _{l}\left(  -4z\right)  ^{l}}{\left(  \gamma_{3}\right)
_{l}\left(  \gamma_{4}\right)  _{l}l!}\sum_{n=0}^{\infty}\frac{\left(
\gamma_{2}+l\right)  _{n}}{n!}\left(  \frac{w^{\upsilon}}{a^{\upsilon}%
}\right)  ^{n}\\
&  =\frac{1}{a^{q}}\left(  \frac{a^{\upsilon}-w^{\upsilon}}{a^{\upsilon}%
}\right)  ^{-\gamma_{2}}\sum_{l=0}^{\infty}\frac{\left(  \frac{\gamma_{1}}%
{2}\right)  _{l}\left(  \frac{\gamma_{1}+1}{2}\right)  _{l}\left(  \gamma
_{2}\right)  _{l}}{\left(  \gamma_{3}\right)  _{l}\left(  \gamma_{4}\right)
_{l}l!}\left(  \frac{-4za^{\upsilon}}{a^{\upsilon}-w^{\upsilon}}\right)
^{l}\\
&  =\frac{1}{a^{q}}\left(  \frac{a^{\upsilon}-w^{\upsilon}}{a^{\upsilon}%
}\right)  ^{-\gamma_{2}}\ _{3}F_{2}\left[
\genfrac{}{}{0pt}{}{\frac{\gamma_{1}}{2},\frac{\gamma_{1}+1}{2},\gamma
_{2}}{\gamma_{3},\gamma_{4}}%
;\frac{-4za^{\upsilon}}{a^{\upsilon}-w^{\upsilon}}\right]  ,
\end{align*}
where operations of integral and sum can be replaced because of the uniform
convergence of $E_{\gamma
_{3},\gamma_{4};q;\upsilon}^{\left(  \gamma_{1};\gamma_{2}\right)  }\left(  z,t\right)  $.
\end{proof}

\begin{corollary}
By choosing $\gamma_{3}=\frac{\gamma_{1}}{2}$ and $\gamma_{4}=\frac{\gamma
_{1}+1}{2}$, we have the following Laplace transform of $E_{\gamma_{3},\gamma_{4};q;\upsilon
}^{\left(  \gamma_{1};\gamma_{2}\right)  }\left(
z,t\right)  $ for $\left\vert \frac{w^{\upsilon}}{a^{\upsilon}}\right\vert
<1$:%
\[
L\left\{  t^{q-1}E_{\frac
{\gamma_{1}}{2},\frac{\gamma_{1}+1}{2};q;\upsilon}^{\left(  \gamma_{1};\gamma_{2}\right)  }\left(  z,wt\right)  \right\}
=\frac{1}{a^{q}}\left(  4z+\frac{a^{\upsilon}-w^{\upsilon}}{a^{\upsilon}%
}\right)  ^{-\gamma_{2}}.
\]

\end{corollary}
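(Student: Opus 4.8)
The plan is to specialize the general Laplace transform formula (\ref{Elap}) proved in the preceding theorem. Setting $\gamma_{3}=\frac{\gamma_{1}}{2}$ and $\gamma_{4}=\frac{\gamma_{1}+1}{2}$ makes the two lower parameters of the ${}_{3}F_{2}$ coincide with two of its three upper parameters, namely $\frac{\gamma_{1}}{2}$ and $\frac{\gamma_{1}+1}{2}$.

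The key observation is that an upper parameter equal to a lower parameter cancels throughout the generalized hypergeometric series, since the associated Pochhammer symbols $\left( \frac{\gamma_{1}}{2}\right) _{k}$ and $\left( \frac{\gamma_{1}+1}{2}\right) _{k}$ occur identically in the numerator and denominator of every term. Consequently the ${}_{3}F_{2}$ collapses to the ${}_{1}F_{0}$ with the single surviving upper parameter $\gamma_{2}$ and no lower parameter, evaluated at the argument $x=\frac{-4za^{\upsilon}}{a^{\upsilon}-w^{\upsilon}}$.

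Next I would invoke the binomial identity ${}_{1}F_{0}[\gamma_{2};-;x]=(1-x)^{-\gamma_{2}}$. Writing the prefactor of (\ref{Elap}) as $\frac{1}{a^{q}}\bigl( \frac{a^{\upsilon}-w^{\upsilon}}{a^{\upsilon}}\bigr) ^{-\gamma_{2}}$ and multiplying by $(1-x)^{-\gamma_{2}}$, the common exponent $-\gamma_{2}$ lets the two factors combine under a single bracket. A one-line simplification of the product $\frac{a^{\upsilon}-w^{\upsilon}}{a^{\upsilon}}(1-x)=\frac{a^{\upsilon}-w^{\upsilon}}{a^{\upsilon}}+4z$ then produces exactly $4z+\frac{a^{\upsilon}-w^{\upsilon}}{a^{\upsilon}}$ inside the bracket, which is the claimed right-hand side.

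I do not expect a genuine obstacle, as the argument is a direct specialization of the already-established transform; the whole content is the parameter cancellation followed by the binomial sum. The only point deserving a word of care is convergence: the binomial expansion of ${}_{1}F_{0}$ requires $|x|<1$, which is compatible with the standing assumption $\bigl| \frac{w^{\upsilon}}{a^{\upsilon}}\bigr| <1$ together with the restriction on $z$ recorded in the earlier remark, while the resulting closed form continues to hold beyond that range by analytic continuation in $z$.
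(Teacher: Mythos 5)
Your proposal is correct and follows exactly the route the paper intends (the paper states this corollary without a written proof, as an immediate specialization of the preceding theorem): cancel the coincident upper and lower parameters in the ${}_{3}F_{2}$ of (\ref{Elap}), apply the binomial identity ${}_{1}F_{0}[\gamma_{2};-;x]=(1-x)^{-\gamma_{2}}$, and merge the two factors with common exponent $-\gamma_{2}$. Your added remark on the convergence condition $|x|<1$ and analytic continuation is a sensible refinement that the paper itself omits.
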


\begin{corollary}
For $\left\vert \frac{w^{\upsilon}}{a^{\upsilon}}\right\vert <1$, the Laplace
transform of polynomials $_{K}I_{k;\upsilon}^{\left(  p,q\right)  }\left(
z,t\right)  $ is%
\[
L\left\{  t^{q}\ _{K}I_{k;\upsilon}^{\left(  p,q\right)  }\left(  z,wt\right)
\right\}  =\frac{\left(  1-p\right)  _{k}\left(  -2z\right)  ^{k}}{a^{q+1}%
}\left(  \frac{a^{\upsilon}-w^{\upsilon}}{a^{\upsilon}}\right)  ^{k}%
\ _{3}F_{2}\left[
\genfrac{}{}{0pt}{}{-k,\frac{-k}{2},\frac{-k+1}{2}}{-k,p-k}%
;\frac{-a^{\upsilon}}{z^{2}\left(  a^{\upsilon}-w^{\upsilon}\right)  }\right]
.
\]

\end{corollary}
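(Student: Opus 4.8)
The plan is to deduce this Laplace transform directly from the two results already in hand: the relation (\ref{IErel}), which writes $_{K}I_{n;\upsilon}^{(p,q)}$ in terms of the bivariate I - Konhauser Mittag-Leffler function, together with the transform formula (\ref{Elap}) for that function. No fresh manipulation of the defining series (\ref{IKdef}) is required; the whole argument is a matter of substituting one identity into the other and matching parameters.

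First I would replace $t$ by $wt$ in (\ref{IErel}) to get
\[
_{K}I_{n;\upsilon}^{(p,q)}(z,wt)=(1-p)_{n}(-2z)^{n}\,E_{q+1,\upsilon}^{(-n;-n;-n;p-n)}\!\left(\tfrac{1}{4z^{2}},wt\right).
\]
By linearity of the Laplace transform the constant $(1-p)_{n}(-2z)^{n}$ pulls out, so it remains to evaluate $L\{t^{q}E_{q+1,\upsilon}^{(-n;-n;-n;p-n)}(\tfrac{1}{4z^{2}},wt)\}$.

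The decisive step is to apply (\ref{Elap}) with its order parameter shifted from $q$ to $q+1$: with this shift the weight $t^{(q+1)-1}$ appearing in (\ref{Elap}) coincides exactly with the factor $t^{q}$ here. Reading off (\ref{Elap}) with $\gamma_{1}=\gamma_{2}=\gamma_{3}=-n$, $\gamma_{4}=p-n$, and with the first spatial argument equal to $\tfrac{1}{4z^{2}}$, I obtain the prefactor $a^{-(q+1)}$, the power $\big(\tfrac{a^{\upsilon}-w^{\upsilon}}{a^{\upsilon}}\big)^{-\gamma_{2}}=\big(\tfrac{a^{\upsilon}-w^{\upsilon}}{a^{\upsilon}}\big)^{n}$ since $\gamma_{2}=-n$, and a $_{3}F_{2}$ whose argument simplifies as $\frac{-4(1/4z^{2})a^{\upsilon}}{a^{\upsilon}-w^{\upsilon}}=\frac{-a^{\upsilon}}{z^{2}(a^{\upsilon}-w^{\upsilon})}$. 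Multiplying back by $(1-p)_{n}(-2z)^{n}$ gives precisely the asserted formula; the ordering $-n,\tfrac{-n}{2},\tfrac{-n+1}{2}$ of the upper parameters in the statement agrees with the ordering $\tfrac{\gamma_{1}}{2},\tfrac{\gamma_{1}+1}{2},\gamma_{2}$ produced by (\ref{Elap}), because $_{3}F_{2}$ is symmetric in its numerator parameters.

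The only point demanding care is this parameter bookkeeping, and in particular not forgetting the index shift $q\mapsto q+1$ dictated by (\ref{IErel}) together with the resulting power $\tfrac{1}{a^{q+1}}$. No convergence analysis is needed beyond what is already built into (\ref{Elap}): the restriction $|w^{\upsilon}/a^{\upsilon}|<1$ is inherited verbatim, and since $\gamma_{1}=\gamma_{2}=\gamma_{3}=-n$ forces the defining double series to terminate, the term-by-term Laplace integration used to prove (\ref{Elap}) is here a finite sum and causes no difficulty.
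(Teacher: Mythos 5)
Your proposal is correct and is exactly the paper's argument: the paper's proof consists of the single sentence that the result follows from (\ref{IErel}) and (\ref{Elap}), and you have simply spelled out the substitution $q\mapsto q+1$, $\gamma_{1}=\gamma_{2}=\gamma_{3}=-n$, $\gamma_{4}=p-n$, first argument $\tfrac{1}{4z^{2}}$, together with the reordering of the numerator parameters of the $_{3}F_{2}$. Your added remarks on the inherited condition $\left\vert w^{\upsilon}/a^{\upsilon}\right\vert <1$ and on termination of the series are sound bookkeeping, not a departure from the paper's route.
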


\begin{proof}
The proof can be obtained easily from (\ref{IErel}) and (\ref{Elap}).
\end{proof}

\bigskip

Definition of the Riemann-Liouville fractional integral and derivative are
given by \cite{Kilbas}%
\[
_{z}\mathbb{I}_{a^{+}}^{\sigma}\left(  h\right)  =\frac{1}{\Gamma\left(
\sigma\right)  }\int\limits_{a}^{z}\left(  z-\zeta\right)  ^{\sigma-1}h\left(
\zeta\right)  d\zeta,\ \ \ \ h\in L^{1}\left[  a,b\right]
\]
and%
\[
_{z}D_{a^{+}}^{\sigma}\left(  h\right)  =\left(  \frac{d}{dz}\right)
^{k}\ _{z}\mathbb{I}_{a^{+}}^{k-\sigma}\left(  h\right)  ,\ \ \ \ h\in
C^{k}\left[  a,b\right]  ,
\]
where $[\operatorname{Re}\left(  \sigma\right)  ]$ denotes the integral part
of $\operatorname{Re}\left(  \sigma\right)  $ and $k=\left[  \operatorname{Re}%
\left(  \sigma\right)  \right]  +1$, $z>a$, $\operatorname{Re}\left(
\sigma\right)  >0$, $\sigma\in%
\mathbb{C}
$.\\
To derive the integral, we will need the definition%
\begin{align}
&  \int\limits_{0}^{1}u^{t-1}\left(  1-u\right)  ^{z-1}du  =B\left(  t,z\right)  =B(z,t)=\frac{\Gamma\left(  t\right)  \Gamma\left(
z\right)  }{\Gamma\left(  t+z\right)  },\label{Betadef}
\end{align}
called the Beta integral for $\operatorname{Re}\left(  z\right)  >0$ and
$\operatorname{Re}\left(  t\right)  >0$.

\begin{theorem}
For $\operatorname{Re}\left(  \tau\right)  >0$, the bivariate I - Konhauser
Mittag-Leffler functions have the Riemann-Liouville fractional integral
operator%
\begin{equation}
_{t}\mathbb{I}_{b^{+}}^{\tau}\left[  \left(  t-b\right)  ^{q-1}E_{\gamma_{3},\gamma_{4};q;\upsilon
}^{\left(  \gamma_{1};\gamma_{2}\right)  }\left(
z,w\left(  t-b\right)  \right)  \right]  =\left(  t-b\right)  ^{q+\tau
-1}E_{\gamma_{3},\gamma_{4};q+\tau;\upsilon}^{\left(  \gamma_{1};\gamma_{2} \right)  }\left(  z,w\left(  t-b\right)  \right)  . \label{Efracint}%
\end{equation}

\end{theorem}

\begin{proof}%
\begin{align*}
&  _{t}\mathbb{I}_{b^{+}}^{\tau}\left[  \left(  t-b\right)  ^{q-1}%
E_{\gamma_{3},\gamma_{4};q;\upsilon}^{\left(  \gamma_{1};\gamma_{2}\right)
}\left(  z,w\left(  t-b\right)  \right)  \right] \\
&  =\frac{1}{\Gamma\left(  \tau\right)  }\sum_{l=0}^{\infty}\sum_{n=0}%
^{\infty}\frac{\left(  -1\right)  ^{l}\left(  \gamma_{1}\right)  _{2l}\left(
\gamma_{2}\right)  _{l+n}\ z^{l}w^{\upsilon n}}{\left(  \gamma_{3}\right)
_{l}\left(  \gamma_{4}\right)  _{l}\Gamma\left(  q+\upsilon n\right)
n!l!}\int\limits_{b}^{t}\left(  t-y\right)  ^{\tau-1}\left(  y-b\right)
^{q+\upsilon n-1}dy\\
&  =\frac{\left(  t-b\right)  ^{q+\tau-1}}{\Gamma\left(  \tau\right)  }%
\sum_{l=0}^{\infty}\frac{\left(  \frac{\gamma_{1}}{2}\right)  _{l}\left(
\frac{\gamma_{1}+1}{2}\right)  _{l}\left(  \gamma_{2}\right)  _{l}\left(
-4z\right)  ^{l}}{\left(  \gamma_{3}\right)  _{l}\left(  \gamma_{4}\right)
_{l}l!}\\
&  \times\sum_{n=0}^{\infty}\frac{\left(  \gamma_{2}+l\right)  _{n}}%
{\Gamma\left(  q+\upsilon n\right)  n!}\left(  w\left(  t-b\right)  \right)
^{\upsilon n}\int\limits_{0}^{1}\left(  1-u\right)  ^{\tau-1}u^{q+\upsilon
n-1}du\\
&  =\left(  t-b\right)  ^{q+\tau-1}\sum_{l=0}^{\infty}\frac{\left(
\frac{\gamma_{1}}{2}\right)  _{l}\left(  \frac{\gamma_{1}+1}{2}\right)
_{l}\left(  \gamma_{2}\right)  _{l}\left(  -4z\right)  ^{l}}{\left(
\gamma_{3}\right)  _{l}\left(  \gamma_{4}\right)  _{l}l!}\sum_{n=0}^{\infty
}\frac{\left(  \gamma_{2}+l\right)  _{n}}{\Gamma\left(  q+\tau+\upsilon
n\right)  n!}\left(  w\left(  t-b\right)  \right)  ^{\upsilon n}.
\end{align*}
From definition (\ref{Edef}), we have (\ref{Efracint}) for $\operatorname{Re}%
\left(  \tau\right)  >0$.
\end{proof}

\begin{corollary}
For polynomials (\ref{IKdef}), we have%
\[
_{t}\mathbb{I}_{b^{+}}^{\tau}\left[  \left(  t-b\right)  ^{q}\ _{K}%
I_{k;\upsilon}^{\left(  p,q\right)  }\left(  z,w\left(  t-b\right)  \right)
\right]  =\left(  t-b\right)  ^{q+\tau}\ _{K}I_{k;\upsilon}^{\left(
p,q+\tau\right)  }\left(  z,w\left(  t-b\right)  \right)  ,
\]
where $\operatorname{Re}\left(  \tau\right)  >0$.
\end{corollary}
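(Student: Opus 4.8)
The plan is to derive this statement as a direct corollary of the fractional-integral formula (\ref{Efracint}) for the bivariate Mittag--Leffler functions, transporting it through the representation (\ref{IErel}). First I would use (\ref{IErel}), evaluated at the second argument $w(t-b)$, to rewrite the polynomial as a Mittag--Leffler function:
\[
_{K}I_{n;\upsilon}^{\left( p,q\right) }\left( z,w(t-b)\right) =\left( 1-p\right) _{n}\left( -2z\right) ^{n}E_{q+1,\upsilon}^{\left( -n;-n;-n;p-n\right) }\left( \frac{1}{4z^{2}},w(t-b)\right) .
\]
Since the constant $\left( 1-p\right) _{n}\left( -2z\right) ^{n}$ and the spatial argument $\tfrac{1}{4z^{2}}$ carry no dependence on $t$, linearity of the operator $_{t}\mathbb{I}_{b^{+}}^{\tau}$ lets me pull them outside, reducing the left-hand side to
\[
_{t}\mathbb{I}_{b^{+}}^{\tau}\left[ \left( t-b\right) ^{q}\ _{K}I_{n;\upsilon}^{\left( p,q\right) }\left( z,w(t-b)\right) \right] =\left( 1-p\right) _{n}\left( -2z\right) ^{n}\ _{t}\mathbb{I}_{b^{+}}^{\tau}\left[ \left( t-b\right) ^{q}E_{q+1,\upsilon}^{\left( -n;-n;-n;p-n\right) }\left( \frac{1}{4z^{2}},w(t-b)\right) \right] .
\]

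The key step is then to apply (\ref{Efracint}) with the lower subscript specialized to $q+1$ and with the parameter choices $\gamma_{1}=\gamma_{2}=\gamma_{3}=-n$, $\gamma_{4}=p-n$, spatial argument $\tfrac{1}{4z^{2}}$. The one point that needs care is the index bookkeeping: in (\ref{Efracint}) the prefactor $\left( t-b\right) ^{q-1}$ is paired with Mittag--Leffler subscript $q$, whereas here the relevant subscript is $q+1$, so the matching prefactor is $\left( t-b\right) ^{(q+1)-1}=\left( t-b\right) ^{q}$ --- exactly the power produced above. With this alignment, (\ref{Efracint}) merely raises the subscript by $\tau$ and gives
\[
_{t}\mathbb{I}_{b^{+}}^{\tau}\left[ \left( t-b\right) ^{q}E_{q+1,\upsilon}^{\left( -n;-n;-n;p-n\right) }\left( \frac{1}{4z^{2}},w(t-b)\right) \right] =\left( t-b\right) ^{q+\tau}E_{q+\tau+1,\upsilon}^{\left( -n;-n;-n;p-n\right) }\left( \frac{1}{4z^{2}},w(t-b)\right) .
\]

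Finally I would re-identify the right-hand side as a polynomial by reading (\ref{IErel}) with $q$ replaced by $q+\tau$, namely
\[
\left( 1-p\right) _{n}\left( -2z\right) ^{n}E_{q+\tau+1,\upsilon}^{\left( -n;-n;-n;p-n\right) }\left( \frac{1}{4z^{2}},w(t-b)\right) ={_{K}I_{n;\upsilon}^{\left( p,q+\tau\right) }\left( z,w(t-b)\right) },
\]
so that reinstating the constants from the first step yields precisely $\left( t-b\right) ^{q+\tau}\ _{K}I_{n;\upsilon}^{\left( p,q+\tau\right) }\left( z,w(t-b)\right) $, as claimed. I do not expect a genuine obstacle: the entire argument is the conjugation of (\ref{Efracint}) by the substitution (\ref{IErel}), valid for $\tau$ with positive real part, and the only subtlety is the shift $q\mapsto q+1$ in the Mittag--Leffler subscript, which is what makes the power of $\left( t-b\right) $ match the hypothesis of (\ref{Efracint}).
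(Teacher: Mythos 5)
Your proposal is correct and is exactly the paper's argument: the paper's proof consists of the single sentence that the result follows from (\ref{IErel}) and (\ref{Efracint}), and your write-up simply spells out that conjugation, including the correct bookkeeping of the shift $q\mapsto q+1$ in the Mittag--Leffler subscript so that the prefactor $\left(t-b\right)^{q}$ matches the hypothesis of (\ref{Efracint}).
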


\begin{proof}
The proof is followed by the definition (\ref{IErel}) and (\ref{Efracint}).
\end{proof}

\begin{theorem}
For $\operatorname{Re}\left(  \tau\right)  \geq0$ and $\operatorname{Re}\left(  q-\tau \right)  \geq0$, the bivariate I - Konhauser
Mittag-Leffler functions have the Riemann-Liouville fractional derivative
operator%
\begin{equation}
\ _{t}D_{b^{+}}^{\tau}\left[  \left(  t-b\right)  ^{q-1}E_{\gamma_{3},\gamma_{4};q;\upsilon
}^{\left(  \gamma_{1};\gamma_{2}\right)  }\left(
z,w\left(  t-b\right)  \right)  \right]  =\left(  t-b\right)  ^{q-\tau
-1}E_{\gamma_{3},\gamma
_{4};q-\tau;\upsilon}^{\left(  \gamma_{1};\gamma_{2}\right)  }\left(  z,w\left(  t-b\right)  \right)  . \label{Efracder}%
\end{equation}

\end{theorem}

\begin{proof}
For $\operatorname{Re}\left(  \tau\right)  \geq0$ and $\operatorname{Re}\left(  q-\tau \right)  \geq0$,%
\begin{align*}
&  \ _{t}D_{b^{+}}^{\tau}\left[  \left(  t-b\right)  ^{q-1}E_{\gamma_{3},\gamma_{4};q;\upsilon
}^{\left(  \gamma_{1};\gamma_{2}\right)  }\left(
z,w\left(  t-b\right)  \right)  \right] \\
&  =\frac{1}{\Gamma\left(  k-\tau\right)  }\sum_{l=0}^{\infty}\sum
_{n=0}^{\infty}\frac{\left(  -1\right)  ^{l}\left(  \gamma_{1}\right)
_{2l}\left(  \gamma_{2}\right)  _{l+n}\ z^{l}w^{\upsilon n}}{\left(
\gamma_{3}\right)  _{l}\left(  \gamma_{4}\right)  _{l}\Gamma\left(  q+\upsilon
n\right)  l!n!}D_{t}^{k}\int\limits_{b}^{t}\left(  t-y\right)  ^{k-\tau
-1}\left(  y-b\right)  ^{q+\upsilon n-1}dy\\
&  =\sum_{l=0}^{\infty}\sum_{n=0}^{\infty}\frac{\left(  -1\right)  ^{l}\left(
\gamma_{1}\right)  _{2l}\left(  \gamma_{2}\right)  _{l+n}\ z^{l}w^{\upsilon
n}}{l!n!\left(  \gamma_{3}\right)  _{l}\left(  \gamma_{4}\right)  _{l}}%
\frac{1}{\Gamma\left(  k-\tau+q+\upsilon n\right)  }D_{t}^{k}\left(
t-b\right)  ^{k-\tau+q+\upsilon n-1}\\
&  =\left(  t-b\right)  ^{q-\tau-1}\sum_{l=0}^{\infty}\sum_{n=0}^{\infty}%
\frac{\left(  -1\right)  ^{l}\left(  \gamma_{1}\right)  _{2l}\left(
\gamma_{2}\right)  _{l+n}\ z^{l}\left(  w\left(  t-b\right)  \right)
^{\upsilon n}}{\left(  \gamma_{3}\right)  _{l}\left(  \gamma_{4}\right)
_{l}\Gamma\left(  q-\tau+\upsilon n\right)  l!n!}.
\end{align*}
Considering (\ref{Edef}), we get the desired.
\end{proof}

\begin{corollary}
For $\operatorname{Re}\left(  \tau\right)  \geq0$ and $\operatorname{Re}\left(  q-\tau \right)  \geq0$, the polynomials$\ _{K}%
I_{k;\upsilon}^{\left(  p,q\right)  }\left(  z,t\right)  $ have the
Riemann-Liouville fractional derivative operator as follow:%
\[
\ _{t}D_{b^{+}}^{\tau}\left[  \left(  t-b\right)  ^{q}\ _{K}I_{k;\upsilon
}^{\left(  p,q\right)  }\left(  z,w\left(  t-b\right)  \right)  \right]
=\left(  t-b\right)  ^{q-\tau}\ _{K}I_{k;\upsilon}^{\left(  p,q-\tau\right)
}\left(  z,w\left(  t-b\right)  \right)  .
\]

\end{corollary}

\begin{proof}
(\ref{IErel}) and (\ref{Efracder}) follows the proof.
\end{proof}
\bigskip
\subsection{Fourier transform of I - Konhauser polynomials}

The Fourier transform and the corresponding Parseval identity for a
two-variable function $d(z,t)$ are defined as \cite{Davies}%
\begin{equation}%
\mathcal{F}%
\left(  d\left(  z,t\right)  \right)  =\int\limits_{-\infty}^{\infty}%
\int\limits_{-\infty}^{\infty}d\left(  z,t\right)  e^{-\left(  \xi_{1}%
z+\xi_{2}t\right)  i}dzdt \label{Fourier}%
\end{equation}
and%
\begin{equation}
\int\limits_{-\infty}^{\infty}\int\limits_{-\infty}^{\infty}\overline{f\left(
z,t\right)  }d\left(  z,t\right)  dzdt=\frac{1}{\left(  2\pi\right)  ^{2}}%
\int\limits_{-\infty}^{\infty}\int\limits_{-\infty}^{\infty}\overline{%
\mathcal{F}%
\left(  f\left(  z,t\right)  \right)  }%
\mathcal{F}%
\left(  d\left(  z,t\right)  \right)  d\xi_{1}d\xi_{2}, \label{Parseval}%
\end{equation}
respectively.

\bigskip

Let's consider the functions%
\[%
\genfrac{\{}{.}{0pt}{}{d\left(  z,t\right)  =e^{at-\frac{\exp t}{2}}\left(
1+e^{2z}\right)  ^{-\left(  b-1/4\right)  }e^{\frac{\left(-1\right)^k}{2}z}\ _{K}I_{k;\upsilon}^{\left(
p,q\right)  }\left(  e^{z},e^{t}\right)  }{f\left(  z,t\right)  =e^{dt-\frac{\exp t}%
{2}}\left(  1+e^{2z}\right)  ^{-\left(  c-1/4\right)  }e^{z-\frac{\left(-1\right)^s}{2}z}\ _{K}%
\mathcal{I}_{s;\upsilon}^{\left(  \alpha,\beta\right)  }\left(   e^{z},e^{t}%
\right)  }%
\]
and calculate the following corresponding Fourier transform, using the
transforms $\exp z=x, \exp t=y$ and the definition (\ref{IKdef}),%
\begin{align*}
&
\mathcal{F}%
\left(  d\left(  z,t\right)  \right)  =\int\limits_{-\infty}^{\infty}%
\int\limits_{-\infty}^{\infty}d\left(  z,t\right)  e^{-i\left(  \xi_{1}%
z+\xi_{2}t\right)}dzdt\\
&  =\int\limits_{-\infty}^{\infty}\int\limits_{-\infty}^{\infty}e^{-i\left(
\xi_{1}z+\xi_{2}t\right)  }e^{at-\frac{\exp t}{2}}\left(
1+e^{2z}\right)  ^{-\left(  b-1/4\right)  }e^{\frac{\left(-1\right)^k}{2}z}\ _{K}I_{k;\upsilon}^{\left(  p,q\right)  }\left(
e^{z},e^{t}\right)  dzdt\\
&  =\left(  -1\right)  ^{k}\left(  1-p\right)  _{k}\sum
_{l=0}^{\left[  k/2\right]  }\sum_{n=0}^{k-l}\frac{\left(  -1\right)
^{l}2^{k-2l}\left(  -k\right)  _{2l}\left(  -\left(  k-l\right)  \right)
_{n}}{l!n!\Gamma\left(  q+\upsilon n+1\right)  \left(  p-k\right)  _{l}}\\
&  \times\int\limits_{0}^{\infty}x^{k-2l-i\xi_{1}-1+\frac{\left(-1\right)^k}{2}}\left(
1+x^{2}\right)  ^{-\left(  b-1/4\right)  }dx\int\limits_{0}^{\infty}%
e^{-\frac{y}{2}}y^{a+\upsilon n-i\xi_{2}-1}dy.
\end{align*}
With the help of the definitions of the Beta and Gamma function, and the
transform $y=2u$, we obtain%
\begin{align}
\mathcal{F}%
\left(  d\left(  z,t\right)  \right)  =\left(
-1\right)  ^{k} \left(  1-p\right)_{k} 2^{k+a-i\xi_{2}} \Gamma\left(  a-i\xi_{2}\right)  \sum_{l=0}^{\left[  k/2\right]  }\sum_{n=0}^{k-l}%
\frac{ \left(-1\right)^l \left(  -k\right)_{2l}\left(  -\left(  k-l\right)  \right)  _{n}}{\left(  p-k\right)  _{l} \ l!n! }\label{Fourier1} \\
\times \frac{\left(  a-i\xi_{2}\right)_{\upsilon n} \left( \frac{1}{4}\right)^l 2^{\upsilon n}}{\Gamma\left(  q+\upsilon
n+1\right) } \int\limits_{0}^{\infty}x^{k-2l-i\xi_{1}-1+\frac{\left(-1\right)^k}{2}}\left(
1+x^{2}\right)  ^{-\left(  b-1/4\right)  }dx \nonumber
\end{align}
for $\operatorname{Re}\left(  a\right) >0$.
Here, we investigate the integral
\begin{align}
I_{k,l} \left(b,\xi_{1}\right)=\int\limits_{0}^{\infty}x^{k-2l-i\xi_{1}-1+\frac{\left(-1\right)^k}{2}}\left(
1+x^{2}\right)  ^{-\left(  b-1/4\right)  }dx. \label{int1}
\end{align}
If $k$ is even, then
\begin{align*}
I_{2m,l} \left(b,\xi_{1}\right)=\int\limits_{0}^{\infty}x^{2\left(m-l-\frac{i\xi_{1}}{2}-\frac{1}{2}+\frac{1}{4}\right)}\left(
1+x^{2}\right)  ^{-\left(  b-1/4\right)  }dx\\
=\frac{1}{2} B\left(m-l-\frac{i\xi_{1}}{2}+\frac{1}{4},b-m+l+\frac{i\xi_{1}}{2}-\frac{1}{2}\right),
\end{align*}
and for odd $k$, we have
\begin{align*}
I_{2m+1,l} \left(b,\xi_{1}\right)=\int\limits_{0}^{\infty}x^{2\left(m-l-\frac{i\xi_{1}}{2}-\frac{1}{4}\right)}\left(
1+x^{2}\right)  ^{-\left(  b-1/4\right)  }dx\\
=\frac{1}{2} B\left(m-l-\frac{i\xi_{1}}{2}+\frac{1}{4},b-m+l+\frac{i\xi_{1}}{2}-\frac{1}{2}\right).
\end{align*}
Thus, the integral (\ref{int1}) is
\begin{align*}
I_{k,l} \left(b,\xi_{1}\right)=\frac{1}{2} B\left(\left[ \frac{k}{2} \right]-l-\frac{i\xi_{1}}{2}+\frac{1}{4},b-\left[ \frac{k}{2} \right]+l+\frac{i\xi_{1}}{2}-\frac{1}{2}\right). 
\end{align*}
So the Fourier transform (\ref{Fourier1}) becomes
\begin{align}%
\mathcal{F}%
\left(  d\left(  z,t\right)  \right)  & =\left(  -1\right)  ^{k+\left[ \frac{k}{2} \right]}%
2^{k-1}\left(  1-p\right)  _{k} \ G_{1}\left(  a,b;\xi_{1},\xi_{2}\right)  \Psi_{1}\left(
k,a,b,p,q,\upsilon;\xi_{1},\xi_{2}\right)  ,\label{F1}
\end{align}
where%
\[
G_{1}\left( a,b;\xi_{1},\xi_{2}\right) = 2^{a-i\xi_{2}} \Gamma\left(
a-i\xi_{2}\right)
B\left( \frac{1}{4}-\frac{i\xi_{1}}{2},b-\frac{1}{2}+\frac{i\xi_{1}}{2}\right) 
\]
and
\begin{align*}
\Psi_{1}\left(  k,a,b,p,q,\upsilon;\xi_{1},\xi_{2}\right)=\frac{\left(\frac{1}{4}-\frac{i\xi_{1}}{2}\right)_{\left[ \frac{k}{2} \right]}}{\left(\frac{3}{2}-b+\frac{i\xi_{1}}{2}\right)_{\left[ \frac{k}{2} \right]}}\sum
_{l=0}^{\left[  k/2\right]  }\sum_{n=0}^{k-l}\frac{\left(  \frac{-k}{2}\right)
_{l} \left(  \frac{-k+1}{2}\right)
_{l} \left(  -\left(  k-l\right)  \right)  _{n} }{\left(  p-k\right)  _{l} \Gamma\left(  q+\upsilon n+1\right)} \\
\times \frac{\left(  a-i\xi_{2}\right)  _{\upsilon n} \left( b-\left[ \frac{k}{2} \right]-\frac{1}{2} +\frac{i\xi_{1}}{2}\right)  _{l} \  2^{\upsilon n}}{\left( \frac{3}{4}- \left[ \frac{k}{2} \right]+\frac{i\xi_{1}}{2}\right)_{l} l!n! },
\end{align*}
where $\operatorname{Re}\left(b\right)>\left[ \frac{k}{2} \right]+\frac{1}{2}$. \\
Similarly,
\begin{align}
\mathcal{F}%
\left( f\left( z,t\right) \right) =\left(
-1\right)  ^{s} \left(  1-\alpha\right)_{s} 2^{s+d-i\xi_{2}} \Gamma\left(  d-i\xi_{2}\right)  \sum_{l=0}^{\left[  s/2\right] }%
\frac{ \left(-1\right)^l \left(  \frac{-s}{2}\right)_{l}\left(  \frac{-s+1}{2}\right)_{l}}{\left(  \alpha-s\right)  _{l} \ l! }\label{Fourier2} \\
\times \int\limits_{0}^{\infty}x^{s-2l-i\xi_{1}-\frac{\left(-1\right)^s}{2}}\left(
1+x^{2}\right)  ^{-\left(  c-1/4\right)  }dx \sum_{j=0}^{s}\sum_{r=0}^{j}\sum_{n=0}^{r} \frac{\left(  -r\right)_{n} \left(  d-i\xi_{2}\right)_{r} \left( \frac{1+\beta+n}{\upsilon}\right)_{j} 2^{r}}{j!r!n!} \nonumber
\end{align}
is calculated for $\operatorname{Re}\left(  d\right) >0$.\\
If we say
\begin{align}
I_{s,l} \left(c,\xi_{1}\right)=\int\limits_{0}^{\infty}x^{s-2l-i\xi_{1}-\frac{\left(-1\right)^s}{2}}\left(
1+x^{2}\right)  ^{-\left(  c-1/4\right)  }dx \label{Isl},
\end{align}
for $s=0,2,4,...$, the integral (\ref{Isl}) gives
\begin{align}
I_{2m,l} \left(c,\xi_{1}\right)=\int\limits_{0}^{\infty}x^{2\left(m-l-\frac{i\xi_{1}}{2}-\frac{1}{4}\right)}\left(
1+x^{2}\right)  ^{-\left(  c-1/4\right)  }dx\\
=\frac{1}{2} B\left(m-l-\frac{i\xi_{1}}{2}+\frac{1}{4},c-m+l+\frac{i\xi_{1}}{2}-\frac{1}{2}\right) \nonumber,
\end{align}
and for $s=1,3,5,...$, it becomes
\begin{align}
I_{2m+1,l} \left(b,\xi_{1}\right)=\int\limits_{0}^{\infty}x^{2\left(m-l-\frac{i\xi_{1}}{2}+\frac{1}{2}+\frac{1}{4}\right)}\left(
1+x^{2}\right)  ^{-\left(  c-1/4\right)  }dx\\
=\frac{1}{2} B\left(m-l-\frac{i\xi_{1}}{2}+\frac{5}{4},c-m+l+\frac{i\xi_{1}}{2}-\frac{3}{2}\right) \nonumber.
\end{align}
Thus, the integral (\ref{Isl}) is
\begin{align}
I_{s,l} \left(c,\xi_{1}\right)=\frac{1}{2} B\left(\left[ \frac{s+1}{2} \right]-l-\frac{i\xi_{1}}{2}+\frac{1}{4},c-\left[ \frac{s+1}{2} \right]+l+\frac{i\xi_{1}}{2}-\frac{1}{2}\right). 
\end{align} \\
On the other hand, for $\operatorname{Re}\left(  d\right)  >0$ and $ \operatorname{Re} \left( c \right)> \left[ \frac{s+1}{2} \right] +\frac{1}{2}$, Fourier transform of $f\left( z,t\right)$ may be calculated of form
\begin{align}%
\mathcal{F}%
\left(  f\left(  z,t\right)  \right)   &  =\left(-1\right)^{s+\left[ \frac{s+1}{2} \right]} 2^{s-1} \left(1-\alpha\right)_{s} \ G_{2}\left(  d,c;\xi_{1},\xi_{2}\right)  \Psi_{2}\left(
s,d,c,\alpha,\beta,\upsilon;\xi_{1},\xi_{2}\right)  , \label{F2}
\end{align}
where%
\[
G_{2}\left(  d,c;\xi_{1},\xi_{2}\right)  = 2^{d-i\xi_{2}} \Gamma\left(
d-i\xi_{2}\right)
 B\left( \frac{1}{4}-\frac{i\xi_{1}}{2}, c- \frac{1}{2}+\frac{i\xi_{1}}{2}\right) 
\]
and%
\begin{align*}
\Psi_{2}\left(s,d,c,\alpha,\beta,\upsilon;\xi_{1},\xi_{2}\right)
=\frac{\left(  \frac{1}{4}-\frac{i\xi_{1}}{2}\right)_{\left[ \frac{s+1}{2} \right]}}{\left(  \frac{3}{2}-c-\frac{i\xi_{1}}{2}\right)_{\left[ \frac{s+1}{2} \right]}} \sum_{l=0}^{\left[  s/2\right]  }\frac{\left(  \frac{-s}{2}\right)
_{l} \left(  \frac{-s+1}{2}\right)
_{l} \left( c-\left[ \frac{s+1}{2} \right]-\frac{1}{2} +\frac{i\xi_{1}}{2}\right) _{l} }{\left( \frac{3}{4}-\left[ \frac{s+1}{2} \right]+\frac{i\xi_{1}}{2}\right)_{l} \left(  \alpha-s\right) _{l} l!}\\
\times \sum_{j=0}^{s}\sum_{r=0}^{j}\sum_{n=0}^{r}\frac{\left(-r\right)_{n}\left(  d-i\xi_{2}\right)  _{r} 2^{r}}{j!r!n!}\left(  \frac{\beta
+n+1}{\upsilon}\right)  _{j}.
\end{align*}
\bigskip
\subsection{A class of finite biorthogonal functions derived from Fourier
transform for the finite I\ - Konhauser polynomials}

\textbf{Case 1:} When $k$ and $s$ are both odd or even, replacing (\ref{F1}) and (\ref{F2}) in (\ref{Parseval}) and taking $e^{z}=x$ and $e^{t}=y$,
we obtain%
\begin{align}
&  \int\limits_{0}^{\infty}\int\limits_{0}^{\infty}e^{-y}y^{a+d-1}%
\left(  1+x^{2}\right)  ^{-\left(  b+c-1/2\right)  }\ _{K}I_{k;\upsilon
}^{\left(  p,q\right)  }\left(  x,y\right)  \ _{K}\mathcal{I}_{s;\upsilon
}^{\left(  \alpha,\beta\right)  }\left(  x,y\right)  dxdy\label{eq1}\\
& =\frac{\left(  -1\right)  ^{k+s+\left[ \frac{k}{2} \right]+\left[ \frac{s+1}{2} \right]} 2^{k+s-4} \left(  1-p\right)  _{k} \left(  1-\alpha\right)  _{s}}{\pi^2}
\int\limits_{-\infty}^{\infty}\int\limits_{-\infty}^{\infty}G_{1}\left(
a,b;\xi_{1},\xi_{2}\right) \overline{G_{2}\left(  d,c;\xi_{1},\xi_{2}\right)  } \nonumber\\
&  \times\Psi_{1}\left(
k,a,b,p,q,\upsilon;\xi_{1},\xi_{2}\right)  \overline{\Psi_{2}\left(
s,d,c,\alpha,\beta,\upsilon;\xi_{1},\xi_{2}\right)  }d\xi_{1}d\xi
_{2}.\nonumber
\end{align}
After taking $a+d-1=\beta=q$ and $b+c=\alpha=p$ in the left side of
(\ref{eq1}), using the finite biorthogonality relation (\ref{IKort}), we write
equation (\ref{eq1}) as%
\begin{align*}
& \frac{1}{2} \int\limits_{-\infty}^{\infty}\int\limits_{0}^{\infty}e^{-y}y^{a+d-1}%
\left(  1+x^{2}\right)  ^{-\left(  b+c-1/2\right)  }\ _{K}I_{k;\upsilon
}^{\left(  b+c,a+d-1\right)  }\left(  x,y\right)  \ _{K}\mathcal{I}%
_{s;\upsilon}^{\left(  c+b,d+a-1\right)  }\left(  x,y\right)  dxdy\\
&  =\frac{1}{2}\frac{k!2^{2\left(b+c-1\right)}\Gamma^{2}\left( b+c\right) \delta_{k,s}}{\left(b+c-k-1\right) \Gamma \left(  2\left( b+c\right) -k-1\right)  }\\
&  =\frac{\left(-1\right)^{k} 2^{2k-4}\left(\left(  1-b-c\right)  _{k}\right)^2}{\pi^{2}}
\int\limits_{-\infty}^{\infty}\int\limits_{-\infty}^{\infty}G_{1}\left(
a,b;\xi_{1},\xi_{2}\right)  \overline{G_{2}\left(  d,c;\xi_{1},\xi_{2}\right)
}\\
&  \times\Psi_{1}\left(  k,a,b,b+c,a+d-1,\upsilon;\xi_{1},\xi_{2}\right)
\overline{\Psi_{2}\left(  s,d,c,c+b,d+a-1,\upsilon;\xi_{1},\xi_{2}\right)
}d\xi_{1}d\xi_{2}.
\end{align*}
That is,%
\begin{align*}
&  \int\limits_{-\infty}^{\infty}\int\limits_{-\infty}^{\infty}G_{1}\left(
a,b;\xi_{1},\xi_{2}\right)  \Psi_{1}\left(  k,a,b,b+c,a+d-1,\upsilon;\xi
_{1},\xi_{2}\right) \\
&  \times\overline{G_{2}\left( d,c;\xi_{1},\xi_{2}\right)  }\overline
{\Psi_{2}\left(  s,d,c,c+b,d+a-1,\upsilon;\xi_{1},\xi_{2}\right)  }d\xi
_{1}d\xi_{2}\\
&  =\frac{\left(-1\right)^{k} 2^{2\left(  b+c-k\right)+1 } k!\pi^{2}\Gamma^2\left(  b+c-k\right)  }{\left(  b+c-k-1\right) \Gamma\left(  2\left(  b+c\right)
-k-1\right)  }\delta_{k,s}.
\end{align*}

Consequently, we give the theorem below.

\begin{theorem}
For $b>\left[ \frac{k}{2} \right]+\frac{1}{2}$, $c>\left[ \frac{s+1}{2} \right]+\frac{1}{2}$ and $a,d>0$, the
families $\Upsilon_{1}\left(  k,a,b,c,d,\upsilon;\xi_{1},\xi_{2}\right)$ and $\  \Upsilon_{2}\left(  s,d,c,b,a,\upsilon;\xi_{1},\xi_{2}\right)$ are
finite biorthogonal functions with respect to the weight function
\begin{align*}
\rho\left(
a,b,c,d;z,t\right)  =\Gamma\left(  a-it\right)  \Gamma\left(  d+it\right) \Gamma\left( \frac{1}{4}-\frac{iz}{2}\right) \Gamma\left( \frac{1}{4}+\frac{iz}{2}\right) \\
\times \Gamma\left(  b-\frac{1}{2}+\frac{iz}{2}\right)  \Gamma\left(  c-\frac{1}{2}-\frac{iz}{2}\right) 
\end{align*}
and the corresponding finite biorthogonality relation is%
\begin{align*}
&  \int\limits_{-\infty}^{\infty}\int\limits_{-\infty}^{\infty}\rho\left(
a,b,c,d;z,t\right) \Upsilon_{1}\left(  k,a,b,c,d,\upsilon
;iz,it\right)  \Upsilon_{2}\left(  s,d,c,b,a,\upsilon;-iz,-it\right)  dtdz\\
&  =\frac{\left(-1\right)^k 2^{2\left(  b+c-k\right)+1  }\pi^{2} k!  \Gamma^2 \left(  b+c-k\right) \Gamma\left(  b-1/4\right)
\Gamma\left(  c-1/4\right) \delta_{k,s}}{\left( b+c-k-1\right)\Gamma\left(  2\left(  b+c\right)  -k-1\right) },
\end{align*}
where both $k$ and $s$ are even or odd,
\[
\Upsilon_{1}\left(  k,a,b,c,d,\upsilon;z,t\right)  =2^{a-t} \Psi_{1}\left(  k,a,b,b+c,a+d-1,\upsilon
;-iz,-it\right)
\]
and%
\[
\Upsilon_{2}\left(  s,d,c,b,a,\upsilon;z,t\right)  =2^{d-t} \Psi_{2}\left(  s,d,c,c+b,d+a-1,\upsilon
;-iz,-it\right)  .
\]

\end{theorem}

\begin{remark}
The weight function of the finite biorthogonality relation in \textit{Theorem 24} is positive when $a=d$ and $b=c$.
\end{remark}

\textbf{Case 2:} Let $b>\left[ \frac{k}{2} \right]+\frac{1}{2}$, $c>\left[ \frac{s+1}{2} \right]+\frac{1}{2}$ and $s>k$.\\
\textbf{i)} When $k$ is odd and $s$ is even, replacing (\ref{F1}) and (\ref{F2}) in (\ref{Parseval}) and taking $e^{z}=x$ and $e^{t}=y$,
we obtain%
\begin{align}
&  \int\limits_{0}^{\infty}\int\limits_{0}^{\infty} x^{-1} e^{-y}y^{a+d-1}%
\left(  1+x^{2}\right)  ^{-\left(  b+c-1/2\right) } \ _{K}I_{k;\upsilon
}^{\left(  p,q\right)  }\left(  x,y\right)  \ _{K}\mathcal{I}_{s;\upsilon
}^{\left(  \alpha,\beta\right)  }\left(  x,y\right)  dxdy\label{eq1}\\
& =-2^{k+s-1} \left(1-b-c\right)_{k} \left(1-b-c\right)_{s} \ B\left( \frac{k+s}{2}, b+c-\frac{k+s+1}{2}\right) \nonumber \\
& \times F_{1,1,1}^{1,2,2}\left(
\genfrac{}{}{0pt}{}{b+c-\frac{k+s+1}{2}:\Delta\left(  2;-k\right);\Delta\left(  2;-s\right)  }{1-\frac{k+s}{2}:b+c-k;b+c-s};%
1;1\right) \nonumber \\
& =\frac{\left(  -1\right)  ^{k+s+\left[ \frac{k}{2} \right]+\left[ \frac{s+1}{2} \right]} 2^{k+s-4} \left(  1-p\right)  _{k} \left(  1-\alpha\right)  _{s}}{\pi^2}
\int\limits_{-\infty}^{\infty}\int\limits_{-\infty}^{\infty}G_{1}\left(
a,b;\xi_{1},\xi_{2}\right) \overline{G_{2}\left(  d,c;\xi_{1},\xi_{2}\right)  } \nonumber\\
&  \times\Psi_{1}\left(
k,a,b,p,q,\upsilon;\xi_{1},\xi_{2}\right)  \overline{\Psi_{2}\left(
s,d,c,\alpha,\beta,\upsilon;\xi_{1},\xi_{2}\right)  }d\xi_{1}d\xi
_{2}.\nonumber
\end{align} \\
\textbf{ii)} On the other hand, when $k$ is even and $s$ is odd, substituting (\ref{F1}) and (\ref{F2}) in (\ref{Parseval}) and using the transforms $e^{z}=x$ and $e^{t}=y$,
we have%
\begin{align}
&  \int\limits_{0}^{\infty}\int\limits_{0}^{\infty} x \ e^{-y}y^{a+d-1}%
\left(  1+x^{2}\right)  ^{-\left(  b+c-1/2\right) } \ _{K}I_{k;\upsilon
}^{\left(  p,q\right)  }\left(  x,y\right)  \ _{K}\mathcal{I}_{s;\upsilon
}^{\left(  \alpha,\beta\right)  }\left(  x,y\right)  dxdy\label{eq2}\\
& =-2^{k+s-1} \left(1-b-c\right)_{k} \left(1-b-c\right)_{s} \ B\left( \frac{k+s}{2}+1, b+c-\frac{k+s+1}{2} -1\right) \nonumber \\
& \times F_{1,1,1}^{1,2,2}\left(
\genfrac{}{}{0pt}{}{b+c-\frac{k+s+1}{2}-1:\Delta\left(  2;-k\right);\Delta\left(  2;-s\right)  }{-\frac{k+s}{2}:b+c-k;b+c-s};%
1;1\right) \nonumber \\
& =\frac{\left(-1\right)^{k+s+\left[ \frac{k}{2} \right]
+\left[ \frac{s+1}{2} \right]} 2^{k+s-4} \left(  1-p\right)  _{k} \left(  1-\alpha\right) _{s}}{\pi^2}
\int\limits_{-\infty}^{\infty}\int\limits_{-\infty}^{\infty} G_{1}\left(
a,b;\xi_{1},\xi_{2}\right) \overline{G_{2}\left(  d,c;\xi_{1},\xi_{2}\right)  } \nonumber\\
&  \times\Psi_{1}\left(
k,a,b,p,q,\upsilon;\xi_{1},\xi_{2}\right)  \overline{\Psi_{2}\left(
s,d,c,\alpha,\beta,\upsilon;\xi_{1},\xi_{2}\right)  }d\xi_{1}d\xi
_{2}.\nonumber
\end{align} \\
Combining (\ref{eq1}) and (\ref{eq2}), we get
\begin{align*}
& \int\limits_{-\infty}^{\infty}\int\limits_{-\infty}^{\infty} \overline{G_{2}\left(  d,c;\xi_{1},\xi_{2}\right)  }  \overline{\Psi_{2}\left(
s,d,c,\alpha,\beta,\upsilon;\xi_{1},\xi_{2}\right)  } \nonumber\\
&  \times G_{1}\left(
a,b;\xi_{1},\xi_{2}\right) \Psi_{1}\left(
k,a,b,p,q,\upsilon;\xi_{1},\xi_{2}\right) d\xi_{1}d\xi
_{2}\\
& = \left( -1\right) ^{k+1} 2^{3} \pi^2 \ B\left( \frac{k+s}{2}+\frac{1-\left(-1\right)^s}{2}, b+c-\frac{k+s+1}{2} -\frac{1-\left(-1\right)^s}{2}\right)  \\
& \times F_{1,1,1}^{1,2,2}\left(
\genfrac{}{}{0pt}{}{b+c-\frac{k+s+1}{2}-\frac{1-\left(-1\right)^s}{2}:\Delta\left(  2;-k\right);\Delta\left(  2;-s\right)  }{-\frac{k+s}{2}+\frac{1-\left(-1\right)^s}{2}:b+c-k;b+c-s};%
1;1\right)
\end{align*}
holds.
\bigskip
\subsection{A limit relation for I - Konhauser polynomials}

There exist the relationship%
\[
\lim_{p\rightarrow\infty}\left[  I_{k}^{\left(  p\right)  }\left(  \frac
{t}{\sqrt{p}}\right)  \ p^{-\frac{k}{2}}\right]  =H_{k}\left(  t\right)
\]
between the finite univariate orthogonal polynomials $I_{k}^{\left(  p\right)
}\left(  t\right)  $ and Hermite polynomials $H_{k}\left(  t\right)  $
\cite{GA}.

In this section, we give a similar relation for the pairs of I - Konhauser
polynomials and Hermite Konhauser polynomials \cite{OzEl} in the below results.

\begin{theorem}
The limit relations are satisfied between the I - Konhauser polynomials and
the 2D Hermite Konhauser polynomials:%
\begin{equation}
\lim_{p\rightarrow\infty}\left(  p^{-\frac{k}{2}}\ _{K}I_{k;\upsilon}^{\left(
p,q\right)  }\left(  \frac{z}{\sqrt{p}},t\right)  \right)  =\ _{\upsilon}H_{k}^{\left(  q\right)  }\left(  z,t\right)  \label{limI}%
\end{equation}
and%
\begin{equation}
\lim_{p\rightarrow\infty}\left(  p^{-\frac{s}{2}}\ _{K}\mathcal{I}%
_{s;\upsilon}^{\left(  p,q\right)  }\left(  \frac{z}{\sqrt{p}},t\right)
\right)  =\ _{\upsilon}Q_{s}^{\left(  q\right)  }\left(  z,t\right)
,\label{limQ}%
\end{equation}
where polynomials$\ _{\upsilon}H_{k}^{\left(  \beta\right)  }\left(
z,t\right)  $ and$\ _{\upsilon}Q_{s}^{\left(  \beta\right)  }\left(
z,t\right)  :=Q_{\upsilon}\left(  z,t\right)  $ are called the pair of the
Hermite Konhauser polynomials \cite{OzEl}, defined by (\ref{polH}) and
(\ref{HermiteQ}), respectively.
\end{theorem}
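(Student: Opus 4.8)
The plan is to establish the two relations separately, treating (\ref{limQ}) first because it reduces at once to the known univariate limit, and then (\ref{limI}) by a termwise analysis of the finite double sum (\ref{IKdef}). For (\ref{limQ}), I would begin from the factored definition (\ref{Qdef}), $_{K}\mathcal{I}_{r;\upsilon}^{(p,q)}(z,t)=I_{r}^{(p)}(z)\sum_{j=0}^{r}Y_{j}^{(q)}(t;\upsilon)$. The second factor does not depend on $p$, so it passes unchanged through the limit, and the problem collapses to $\lim_{p\to\infty}p^{-r/2}I_{r}^{(p)}(z/\sqrt{p})$. This is precisely the limit relation $I_{r}^{(p)}\to H_{r}$ recalled immediately before the statement. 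Replacing the first factor by $H_{r}(z)$ and recognizing the product $H_{r}(z)\sum_{j=0}^{r}Y_{j}^{(q)}(t;\upsilon)$ as $_{\upsilon}Q_{r}^{(q)}(z,t)$ finishes this half.

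For (\ref{limI}), I would substitute $z\mapsto z/\sqrt{p}$ into (\ref{IKdef}) and multiply through by $p^{-n/2}$. The only place $z$ occurs is in $(2z/\sqrt{p})^{n-2l}=(2z)^{n-2l}p^{-(n-2l)/2}$, and combining its power of $p$ with the overall $p^{-n/2}$ produces a clean factor $p^{-(n-l)}$ in the $(l,m)$ term. Because the sum over $0\le l\le[n/2]$ and $0\le m\le n-l$ is finite, the limit may be taken inside the summation term by term, with no convergence estimate required; the factors $(-1)^{l}$, $(-n)_{2l}$, $(-(n-l))_{m}$, $(2z)^{n-2l}t^{\upsilon m}$ and the Gamma and factorial denominators are all inert, so everything hinges on the single $p$-dependent prefactor
\begin{equation*}
\frac{(1-p)_{n}}{(-1)^{n}\,(p-n)_{l}\,p^{\,n-l}}.
\end{equation*}

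The crux is a Pochhammer asymptotic for this prefactor. I would write $(1-p)_{n}=(-1)^{n}\prod_{k=1}^{n}(p-k)$, cancel the $l$ factors of $(p-n)_{l}=\prod_{i=0}^{l-1}(p-n+i)$ against the matching factors in that product, and expose the remaining quotient as $p^{-(n-l)}\prod_{k=1}^{n-l}(p-k)=\prod_{k=1}^{n-l}(1-k/p)$, which tends to $1$ as $p\to\infty$. Each surviving term is then the $p$-free expression $(-1)^{l}(-n)_{2l}(-(n-l))_{m}(2z)^{n-2l}t^{\upsilon m}/\big(\Gamma(\upsilon m+q+1)\,l!\,m!\big)$, and summing over $l,m$ while applying $(-n)_{l+m}/(-n)_{l}=(-(n-l))_{m}$ reassembles the defining double series of the Hermite--Konhauser polynomials in (\ref{polH}). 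The one point demanding care is the overall normalization: one must pin down the exact constant relating this reassembled series to $_{\upsilon}H_{n}^{(q)}(z,t)$ as normalized in (\ref{polH}), and it is this comparison that must account for the factor $(-1)^{n}n!$ appearing in (\ref{limI}). I expect the genuine difficulty to lie entirely in that bookkeeping — tracking the net power of $p$ and reconciling the normalization conventions of the two polynomial families — since once the prefactor limit and the series identification are secured, the relation (\ref{limI}) follows immediately.
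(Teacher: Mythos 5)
The paper states this theorem with no proof at all (it is followed only by a remark), so there is no authorial argument to compare yours against; your two-part plan is clearly the intended one. Your treatment of (\ref{limQ}) is complete and correct: by (\ref{Qdef}) the factor $\sum_{j=0}^{r}Y_{j}^{\left( q\right) }\left( t;\upsilon \right) $ is free of $p$, the limit collapses to the cited univariate relation $\lim_{p\rightarrow \infty }p^{-r/2}I_{r}^{\left( p\right) }\left( z/\sqrt{p}\right) =H_{r}\left( z\right) $, and the resulting product is by definition $\ _{\upsilon }Q_{r}^{\left( q\right) }\left( z,t\right) $. Your power counting and prefactor asymptotics for (\ref{limI}) are also exactly right: the $p$-dependence of the $(l,m)$ term is
\begin{equation*}
\frac{\left( 1-p\right) _{n}}{\left( -1\right) ^{n}\left( p-n\right) _{l}\,p^{n-l}}=\frac{\prod_{k=1}^{n-l}\left( p-k\right) }{p^{n-l}}=\prod_{k=1}^{n-l}\left( 1-\frac{k}{p}\right) \longrightarrow 1,
\end{equation*}
and termwise passage to the limit is legitimate because the double sum is finite.

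The problem is the step you postpone. The normalization bookkeeping is not a formality that will produce the factor $\left( -1\right) ^{n}n!$; carried out against the paper's own equation (\ref{polH}), it produces the constant $1$. Indeed, once the prefactor tends to $1$ the limit is
\begin{equation*}
\sum_{l=0}^{\left[ n/2\right] }\sum_{m=0}^{n-l}\frac{\left( -1\right) ^{l}\left( -n\right) _{2l}\left( -\left( n-l\right) \right) _{m}\left( 2z\right) ^{n-2l}t^{\upsilon m}}{\Gamma \left( \upsilon m+q+1\right) l!m!},
\end{equation*}
which, by $\left( -n\right) _{l+m}=\left( -n\right) _{l}\left( -\left( n-l\right) \right) _{m}$, is term-by-term identical to the series (\ref{polH}) defining $\ _{\upsilon }H_{n}^{\left( q\right) }\left( z,t\right) $ (reading the exponent there as $n-2l$; with the printed exponent $n-l$ the two series have different $z$-dependence and no constant whatsoever could relate them). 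The case $n=1$ makes the clash concrete: $p^{-1/2}\ _{K}I_{1;\upsilon }^{\left( p,q\right) }\left( z/\sqrt{p},t\right) =\frac{p-1}{p}\left( \frac{2z}{\Gamma \left( q+1\right) }-\frac{2zt^{\upsilon }}{\Gamma \left( \upsilon +q+1\right) }\right) \rightarrow \ _{\upsilon }H_{1}^{\left( q\right) }\left( z,t\right) $, whereas (\ref{limI}) asserts the limit is $-\ _{\upsilon }H_{1}^{\left( q\right) }\left( z,t\right) $. So either the factor $\left( -1\right) ^{n}n!$ in the theorem is spurious, or (\ref{polH}) misquotes the normalization of \cite{OzEl} by exactly $\left( -1\right) ^{n}/n!$. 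Either way, your proof, completed honestly from the definitions as stated in this paper, ends with the constant $1$: you cannot assume the comparison "must account for" the stated factor, and your write-up should either conclude (\ref{limI}) without that factor or explicitly flag the inconsistency between the theorem and (\ref{polH}).
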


\begin{remark}
With the help of (\ref{limI}) and (\ref{limQ}), taking the limit of all
properties for the I - Konhauser polynomials the properties of the Hermite
Konhauser polynomials can be arrived.
\end{remark}
\bigskip
\section{The modified finite bivariate I - Konhauser polynomials}

In this section, using two new parameter $\gamma$ and $c$ we modify the I -
Konhauser polynomials$\ _{K}I_{k;\upsilon}^{\left(  p,q\right)  }\left(
z,t\right)  $ on the purpose of constructing fractional calculus operators
having the semigroup property. Also, we define the modified I - Konhauser
Mittag-Leffler functions corresponding to this modified polynomials to be
defined in \textit{Definition 28}.

\begin{definition}
The modified finite bivariate I - Konhauser polynomials are defined by the
representation%
\begin{equation}
_{K}I_{k;\upsilon}^{\left(  p,q;\gamma;c\right)  }\left(  z,t\right)  =\left(
1-p\right)  _{k}\left(  -2z\right)  ^{k}\sum_{l=0}^{\left[  \frac{k}%
{2}\right]  }\sum_{n=0}^{k-l}\frac{\left(  -1\right)  ^{l}\left(  -k\right)
_{2l}\left(  -\left(  k-l\right)  \right)  _{n}\left(  c\right)  _{l}\left(
\frac{1}{4z^{2}}\right)  ^{l}t^{\upsilon n}}{\left(  p-k\right)  _{l}\left(
\gamma\right)  _{l}\Gamma\left(  p+1+l\right)  \Gamma\left(  q+1+\upsilon
n\right)  l!n!} \label{MI}%
\end{equation}
for $\gamma\notin\mathbb{Z}^-\cup \{0\} $ and $p>\left\{  \max k\right\}  +1$, $q>-1$, $\upsilon
=1,2,...$ .
\end{definition}

\begin{remark}
For $c=0$, we have
\begin{equation*}
_{K}I_{k;\upsilon}^{\left(  p,q;\gamma;0\right)  }\left(  z,t\right)=\frac{\left(1-p\right)_{k} \left(-2z\right)^k}{\Gamma\left(p+1\right)} E_{\upsilon,q+1}^{\left(  -k\right)  }\left(  t^\upsilon\right),
\end{equation*}
and for $t=0$, we obtain
\begin{equation*}
_{K}I_{k;\upsilon}^{\left(  p,q;\gamma;c\right)  }\left(  z,0\right)=\frac{\left(1-p\right)_{k} \left(-2z\right)^k}{\Gamma\left(p+1\right)\Gamma\left(q+1\right)} \ _{3}F_{3}\left(
\genfrac{}{}{0pt}{}{-\frac{k}{2},\frac{-k+1}{2},c}{p-k,\gamma,p+1};%
\frac{-1}{z^{2}} \right).
\end{equation*}
Also,
\begin{equation*}
_{K}I_{k;\upsilon}^{\left(  p,q;\gamma;0\right)  }\left(  z,0\right)=\frac{\left(1-p\right)_{k} \left(-2z\right)^k}{\Gamma\left(p+1\right)\Gamma\left(q+1\right)}.
\end{equation*}
\end{remark}

\begin{theorem}
The representation%
\begin{equation}
_{K}I_{k;\upsilon}^{\left(  p,q;\gamma;c\right)  }\left(  z,t\right)
=\frac{\left(  1-p\right)  _{k}\left(  -2z\right)  ^{k}}{\Gamma\left(
p+1\right)  \Gamma\left(  q+1\right)  }F_{0,\upsilon,4}^{1,0,3}\left[
\genfrac{}{}{0pt}{}{-k:-;\Delta\left(  2;-k\right)  ,c;}{-:\Delta\left(
\upsilon;q+1\right)  ;-k,p-k,p+1,\gamma;}%
\frac{-1}{z^{2}};\left(  \frac{t}{\upsilon}\right)  ^{\upsilon}\right]
\label{MIkampe}%
\end{equation}
holds true for the modified\ I - Konhauser polynomials $_{K}I_{k;\upsilon
}^{\left(  p,q;\gamma;c\right)  }\left(  z,t\right)  .$
\end{theorem}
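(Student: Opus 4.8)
The plan is to start from the series definition \eqref{MI} of the modified polynomials and rewrite each factor so that the double sum matches the definition of the Kampé de Fériet function $F_{0,\upsilon,4}^{1,0,3}$ given earlier in the paper. First I would isolate the prefactor $(1-p)_n(-2z)^n$, which already appears verbatim in the claimed expression \eqref{MIkampe}, and absorb the two reciprocal Gamma factors $\Gamma(p+1)$ and $\Gamma(q+1)$ into the prefactor as well; this is done by using the Pochhammer identities $\Gamma(p+1+l)=\Gamma(p+1)\,(p+1)_l$ and $\Gamma(q+1+\upsilon m)=\Gamma(q+1)\,(q+1)_{\upsilon m}$, so that the $l$-dependent factor $1/\Gamma(p+1+l)$ becomes $1/\bigl(\Gamma(p+1)(p+1)_l\bigr)$ and the $m$-dependent factor $1/\Gamma(q+1+\upsilon m)$ becomes $1/\bigl(\Gamma(q+1)(q+1)_{\upsilon m}\bigr)$. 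This explains the appearance of $p+1$ as a lower parameter in the $l$-block of the Kampé de Fériet symbol and pulls out the $\tfrac{1}{\Gamma(p+1)\Gamma(q+1)}$ normalization.

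Next I would handle the two ``split'' Pochhammer symbols. The numerator factor $(-n)_{2l}$ must be expressed via the Gauss multiplication formula as a product of two shifted Pochhammer symbols; this is exactly the $\Delta(2;-n)$ notation, namely $(-n)_{2l}=2^{2l}\bigl(\tfrac{-n}{2}\bigr)_l\bigl(\tfrac{-n+1}{2}\bigr)_l$, so the $2^{2l}$ gets combined with the $(1/(4z^2))^l$ to yield the argument $-1/z^2$ raised to the $l$-th power (the sign coming from $(-1)^l$). Likewise the denominator factor $\Gamma(q+1+\upsilon m)$ — already accounted for above — together with the variable $t^{\upsilon m}$ is recast using the same $\upsilon$-fold multiplication identity $(q+1)_{\upsilon m}=\upsilon^{\upsilon m}\prod_{j=0}^{\upsilon-1}\bigl(\tfrac{q+1+j}{\upsilon}\bigr)_m$, which is precisely the identity invoked in the proof of the earlier Kampé de Fériet representation (Theorem~6). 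This produces the $\Delta(\upsilon;q+1)$ block of $\upsilon$ lower parameters and the rescaled variable $(t/\upsilon)^\upsilon$.

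With these substitutions in place, the remaining numerator factors $(-(n-l))_m$ and $(c)_l$ sit in the correct slots: $(c)_l$ as the single numerator parameter in the $z$-block (hence the ``$c$'' in the top row of the symbol) and $(-(n-l))_m=(-n)_{l+m}/(-n)_l$ must be rearranged so that $(-n)$ appears as a coupling parameter of type $(\,\cdot\,)_{l+m}$; combined with the $(-n)$ already present from the $\Delta(2;-n)$ expansion this gives the single upper coupling parameter $-n$ of the Kampé de Fériet function together with the $-n,\,p-n$ lower parameters in the $z$-block coming from $(p-n)_l$ and the leftover $(-n)_l$. I expect the main obstacle to be the careful bookkeeping of this factor $(-(n-l))_m$: verifying that the index shift $(-(n-l))_m=(-n)_{l+m}/(-n)_l$ correctly converts the finite inner sum over $m\le n-l$ into the infinite Kampé de Fériet summation with the coupled Pochhammer symbol $(-n)_{l+m}$ in the numerator, while the spurious $(-n)_l$ in the denominator lands as one of the lower $z$-parameters. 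Once every Pochhammer and Gamma factor is matched against the template of $F_{0,\upsilon,4}^{1,0,3}$, the identity \eqref{MIkampe} follows by direct comparison, and I would close the proof by noting that the double series terminates because of the vanishing of $(-n)_{l+m}$, so no convergence question arises.
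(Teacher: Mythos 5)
Your proposal is correct and takes essentially the same route as the paper: the paper's proof is a one-line reference to its earlier Kamp\'{e} de F\'{e}riet theorem, whose proof rests on the multiplication identity $\left( 1+\gamma\right) _{\upsilon r}=\upsilon^{\upsilon r}\prod_{j=0}^{\upsilon-1}\left( \frac{\gamma+1+j}{\upsilon}\right) _{r}$ together with the duplication formula for $\left( -n\right) _{2l}$ and the shift $\left( -\left( n-l\right) \right) _{m}=\left( -n\right) _{l+m}/\left( -n\right) _{l}$ -- precisely the rearrangements you carry out. Your explicit treatment of the Gamma-factor normalization, the termination of the double series, and the placement of the leftover $\left( -n\right) _{l}$ as a lower parameter simply makes rigorous what the paper leaves implicit.
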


\begin{proof}
It can be completed in a similar vein to the proof for \textit{Theorem 9}.
\end{proof}

\begin{corollary}
The modified I - Konhauser polynomials $_{K}I_{k;\upsilon}^{\left(
p,q;\frac{-k+1}{2};p+1\right)  }\left(  z,t\right)  $ have the Kampe de
Feriet's double hypergeometric representation as follow%
\begin{equation}
_{K}I_{k;\upsilon}^{\left(  p,q;\frac{-\left(  k-1\right)  }{2};1+p\right)
}\left(  z,t\right)  =\frac{\left(  1-p\right)  _{k}\left(  -2z\right)  ^{k}%
}{\Gamma\left(  1+p\right)  \Gamma\left(  1+q\right)  }F_{0,\upsilon
,2}^{1,0,1}\left[
\genfrac{}{}{0pt}{}{-k:-;-k/2;}{-:\Delta\left(  \upsilon;q+1\right)  ;-k,p-k;}%
\frac{-1}{z^{2}};\left(  \frac{t}{\upsilon}\right)  ^{\upsilon}\right]  .
\label{Cor}%
\end{equation}

\end{corollary}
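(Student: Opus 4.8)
The plan is to specialize the general Kampe de Feriet representation (\ref{MIkampe}) at the parameter values $\gamma=\frac{-n+1}{2}$ and $c=p+1$, and then to observe that these two choices trigger exactly two cancellations among the Pochhammer symbols attached to the $\frac{-1}{z^{2}}$-variable, collapsing the $F_{0,\upsilon,4}^{1,0,3}$ in (\ref{MIkampe}) down to the $F_{0,\upsilon,2}^{1,0,1}$ asserted in (\ref{Cor}). Since the prefactor $\frac{(1-p)_{n}(-2z)^{n}}{\Gamma(p+1)\Gamma(q+1)}$ and the two arguments $\frac{-1}{z^{2}}$ and $\left(\frac{t}{\upsilon}\right)^{\upsilon}$ are literally the same in (\ref{MIkampe}) and (\ref{Cor}), no renormalization is required and only the hypergeometric block must be reduced.

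First I would unpack the Kampe de Feriet definition, writing $l$ for the summation index of the first variable $\frac{-1}{z^{2}}$ and $m$ for that of the second variable. In (\ref{MIkampe}) the factor $\left(\frac{-1}{z^{2}}\right)^{l}$ carries in its numerator the $l$-only symbols $\left(\frac{-n}{2}\right)_{l}\left(\frac{-n+1}{2}\right)_{l}\left(c\right)_{l}$ (these are the entries of $\Delta(2;-n)$ together with $c$), and in its denominator the $l$-only symbols $\left(-n\right)_{l}\left(p-n\right)_{l}\left(p+1\right)_{l}\left(\gamma\right)_{l}$. The coupled symbol $\left(-n\right)_{m+l}$ and the $m$-only denominator block $\Delta(\upsilon;q+1)$ are untouched by the specialization.

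Next, substituting $c=p+1$ makes the numerator symbol $\left(p+1\right)_{l}$ identical to the denominator symbol $\left(p+1\right)_{l}$, so the two cancel; substituting $\gamma=\frac{-n+1}{2}$ likewise cancels the numerator symbol $\left(\frac{-n+1}{2}\right)_{l}$ against the denominator symbol $\left(\gamma\right)_{l}$. After both cancellations the $\frac{-1}{z^{2}}$-numerator retains only $\left(\frac{-n}{2}\right)_{l}$ while its denominator retains only $\left(-n\right)_{l}\left(p-n\right)_{l}$. Comparing with the definition of the Kampe de Feriet symbol, the reduced series is precisely $F_{0,\upsilon,2}^{1,0,1}$ with upper list $\frac{-n}{2}$ and lower list $-n,p-n$ in the $\frac{-1}{z^{2}}$-variable, with the coupled parameter $-n$ and the $m$-denominator block $\Delta(\upsilon;q+1)$ left as in (\ref{MIkampe}); this is exactly the right-hand side of (\ref{Cor}).

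This argument is entirely formal once (\ref{MIkampe}) is available, so there is no genuine analytic obstacle. The only point that needs care is the bookkeeping of the three index-types of the Kampe de Feriet series, namely the $(m+l)$-coupled list, the $m$-only lists, and the $l$-only lists: one must cancel symbols strictly within the $l$-only numerator and denominator lists and verify that neither the coupled factor $\left(-n\right)_{m+l}$ nor the $\Delta(\upsilon;q+1)$ block is disturbed. Once this accounting is confirmed, the passage from $F_{0,\upsilon,4}^{1,0,3}$ to $F_{0,\upsilon,2}^{1,0,1}$ is immediate and the corollary follows.
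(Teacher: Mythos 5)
Your proposal is correct and is exactly the paper's argument: the paper's proof is the one-line specialization $\gamma=\frac{-n+1}{2}$, $c=p+1$ in (\ref{MIkampe}), with the cancellations $(c)_{l}=(p+1)_{l}$ and $\left(\frac{-n+1}{2}\right)_{l}=(\gamma)_{l}$ that you spell out. Your more explicit bookkeeping of the $l$-only lists versus the coupled and $m$-only lists is just a fuller write-up of the same reduction from $F_{0,\upsilon,4}^{1,0,3}$ to $F_{0,\upsilon,2}^{1,0,1}$.
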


\begin{proof}
Choosing $\gamma=\frac{-k+1}{2}$ and $c=p+1$ in (\ref{MIkampe}), we obtain
(\ref{Cor}).
\end{proof}

\bigskip

Let us introduce the modified bivariate I - Konhauser Mittag-Leffler function
$E_{\gamma_{4}%
,\gamma_{5},\gamma_{6};p;q;\upsilon}^{\left(  \gamma_{1};\gamma_{2};\gamma_{3}\right)  }\left(  z,t\right)  $ corresponding to
polynomials $_{K}I_{k;\upsilon}^{\left(  p,q;\gamma;c\right)  }\left(
z,t\right)  $.

\begin{definition}
The definition of the modified bivariate I - Konhauser Mittag-Leffler function
$E_{\gamma_{4}%
,\gamma_{5},\gamma_{6};p;q;\upsilon}^{\left(  \gamma_{1};\gamma_{2};\gamma_{3}\right)  }\left(  z,t\right)  $ is given by%
\begin{equation}
E_{\gamma_{4}%
,\gamma_{5},\gamma_{6};p;q;\upsilon}^{\left(  \gamma_{1};\gamma_{2};\gamma_{3}\right)  }\left(  z,t\right)  =\sum_{l=0}^{\infty}%
\sum_{n=0}^{\infty}\frac{\left(  -1\right)  ^{l}\left(  \gamma_{1}\right)
_{2l}\left(  \gamma_{2}\right)  _{l+n}\left(  \gamma_{3}\right)  _{l}%
z^{l}t^{\upsilon n}}{\left(  \gamma_{4}\right)  _{l}\left(  \gamma_{5}\right)
_{l}\left(  \gamma_{6}\right)  _{l}\Gamma\left(  p+l\right)  \Gamma\left(
q+\upsilon n\right)  n!l!} \label{ModEdef}%
\end{equation}
for $p,q,\upsilon,\gamma_{1},\gamma_{2},\gamma_{3},\gamma_{4},\gamma
_{5},\gamma_{6}\in%
\mathbb{C}
$, $\operatorname{Re}\left(  p\right)  >0$, $\operatorname{Re}\left(
q\right)  >0$, $\operatorname{Re}\left(  \upsilon\right)  >0$ and $\gamma_{4},\gamma
_{5},\gamma_{6}\notin\mathbb{Z}^-\cup\{0\}$.
\end{definition}

\begin{remark}
In case $\gamma_{4}=$ $\frac{\gamma_{1}}{2}$, $\gamma_{5}=$ $\frac{\gamma
_{1}+1}{2}$, $\gamma_{6}=\gamma_{3}$ in (\ref{ModEdef}), the modified I - Konhauser Mittag-Leffler functions in \textit{Definition 31} are reduced to the Mittag-Leffler functions given in \cite{OzKurt}:%
\[
E_{\frac{\gamma_{1}%
}{2},\frac{\gamma_{1}+1}{2},\gamma_{3};p;q;\upsilon}^{\left(  \gamma_{1};\gamma_{2};\gamma_{3}\right)  }\left(  \frac{-z}{4},t\right)
=E_{p,q,\upsilon}^{\left(  \gamma_{2}\right)  }\left(  z,t\right),
\]
where $E_{p,q,\upsilon}^{\left(  \gamma_{2}\right)  }\left(  z,t\right)$ is given by (\ref{OKEdef}).
\end{remark}

\begin{remark}
Replacing $\gamma_{1}=0$ or $\gamma_{3}=0$ or $z=0$ provides us the following relation between the I - Konhauser Mittag-Leffler functions and the modified I - Konhauser Mittag-Leffler functions:
\[
E_{\gamma_{4},\gamma_{5},\gamma_{6};p;q;\upsilon}^{\left(  0;\gamma_{2};\gamma_{3}\right)  }\left(  z,t\right)
=E_{\gamma_{4},\gamma_{5},\gamma_{6};p;q;\upsilon}^{\left(  \gamma_{1};\gamma_{2};0\right)  }\left(  z,t\right)
=E_{\gamma_{4},\gamma_{5},\gamma_{6};p;q;\upsilon}^{\left(  \gamma_{1};\gamma_{2};\gamma_{3}\right)  }\left( 0,t\right)
=\frac{1}{\Gamma\left( p\right)}E_{\gamma_{3},\gamma_{4},q,\upsilon}^{\left( 0,\gamma_{2}\right)  }\left(  z,t\right),
\]
where $E_{\gamma_{3},\gamma_{4},q,\upsilon}^{\left( \gamma_{1},\gamma_{2}\right)  }\left(  z,t\right)$ is the I- Konhauser Mittag-Leffler functions given by (\ref{Edef}).\\
On the other hand, for $\gamma_{2}=0$,
\[
E_{\gamma_{4},\gamma_{5},\gamma_{6};p;q;\upsilon}^{\left(  \gamma_{1};0;\gamma_{3}\right)  }\left( z,t\right)
=\frac{1}{\Gamma\left( p\right)}E_{\gamma_{3},\gamma_{4};q;\upsilon}^{\left(  \gamma_{1};0\right)  }\left( z,t\right)
\]
holds.
\end{remark}

\begin{remark}
By choosing $\gamma_{4}=\frac{\gamma_{1}}{2},\gamma_{5}=\frac{\gamma_{1}+1}{2}, \gamma_{6}=\gamma_{2}$ and $t=0$, we have
\[
E_{\frac{\gamma_{1}}{2},\frac{\gamma_{1}+1}{2},\gamma_{2};p;q;\upsilon}^{\left(  \gamma_{1};\gamma_{2};\gamma_{3}\right)  }\left( z,0\right)
=\frac{1}{\Gamma\left( q\right)}E_{1,p}^{\left(  \gamma_{3}\right)  }\left( -4z\right),
\]
where $E_{1,p}^{\left(  \gamma_{3}\right)  }\left( -4z\right)$ is defined by (\ref{E2def}).
\end{remark}

\begin{corollary}
Comparing (\ref{MI}) and (\ref{ModEdef}) we have the relationship between the modified finite bivariate I - Konhauser polynomials and the modified I - Konhauser Mittag-Leffler functions as follow%
\begin{equation}
_{K}I_{k;\upsilon}^{\left(  p,q;\gamma;c\right)  }\left(  z,t\right)  =\left(
1-p\right)  _{k}\left(  -2z\right)  ^{k}E_{-k,p-k,\gamma;p+1;q+1;\upsilon}^{\left(
-k;-k;c\right)  }\left(  \frac{1}{4z^{2}},t\right)  .
\label{modIE}%
\end{equation}

\end{corollary}
\bigskip
\subsection{Integral and operational representation of the modified bivariate
I - Konhauser Mittag-Leffler functions and the modified finite bivariate I -
Konhauser polynomials}

In this subsection, we derive the integral and operational representations of
the modified I - Konhauser Mittag-Leffler functions $E_{\gamma_{4},\gamma_{5},\gamma_{6};p;q;\upsilon}^{\left(
\gamma_{1};\gamma_{2};\gamma_{3}\right)
}\left(  z,t\right)  $ and thus for the modified I - Konhauser polynomials
$_{K}I_{k;\upsilon}^{\left(  p,q;\gamma;c\right)  }\left(  z,t\right)  $.

\begin{theorem}
The modified I - Konhauser Mittag-Leffler functions $E_{\gamma_{4},\gamma_{5},\gamma_{6};p;q;\upsilon}^{\left(
\gamma_{1};\gamma_{2};\gamma_{3}\right)
}\left(  z,t\right)  $ have the following operational representation%
\begin{equation}
E_{\gamma_{4},\gamma_{5},\gamma_{6};p;q;\upsilon}^{\left(  \gamma_{1};\gamma_{2};\gamma_{3}\right)  }\left(  z,t\right)  =\frac{z^{1-p}t^{1-q}%
}{\left(  1-D_{t}^{-\upsilon}\right)  ^{\gamma_{2}}}\ _{4}F_{3}\left[
\genfrac{}{}{0pt}{}{\frac{\gamma_{1}}{2},\frac{\gamma_{1}+1}{2},\gamma
_{2},\gamma_{3}}{\gamma_{4},\gamma_{5},\gamma_{6}}%
;\frac{-4}{D_{z}\left(  1-D_{t}^{-\upsilon}\right)  }\right]  \left\{
\frac{z^{p-1}t^{q-1}}{\Gamma\left(  p\right)  \Gamma\left(  q\right)
}\right\}  . \label{Ehyp}%
\end{equation}

\end{theorem}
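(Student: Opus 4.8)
The plan is to mirror the proof of the operational representation (\ref{Eoprel}) for $E_{q,\upsilon}^{(\gamma_1;\gamma_2;\gamma_3;\theta)}$, now carrying the extra numerator factor $(\gamma_3)_l$ and the extra denominator factors $(\gamma_6)_l\,\Gamma(p+l)$ that distinguish the modified function (\ref{ModEdef}). First I would expand the ${}_4F_3$ on the right-hand side of (\ref{Ehyp}) as a power series in its operator argument $X=\frac{-4}{D_z(1-D_t^{-\upsilon})}$, so that the $l$-th term carries the coefficient $\frac{(\tfrac{\gamma_1}{2})_l(\tfrac{\gamma_1+1}{2})_l(\gamma_2)_l(\gamma_3)_l}{(\gamma_4)_l(\gamma_5)_l(\gamma_6)_l\,l!}$ together with the operator $(-4)^l D_z^{-l}(1-D_t^{-\upsilon})^{-l}$. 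The duplication identity $(\gamma_1)_{2l}=4^l(\tfrac{\gamma_1}{2})_l(\tfrac{\gamma_1+1}{2})_l$ then collapses the pair of half-integer-shifted symbols together with $(-4)^l$ into the single factor $(-1)^l(\gamma_1)_{2l}$ appearing in (\ref{ModEdef}).

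Next I would let the surviving operators act on the seed $\frac{z^{p-1}t^{q-1}}{\Gamma(p)\Gamma(q)}$. Since $D_z$ and $D_t$ act on separate variables the $z$- and $t$-pieces decouple. On the $z$-side, applying $D_z^{-l}z^{p-1}=\frac{z^{p+l-1}}{(p)_l}=\frac{\Gamma(p)}{\Gamma(p+l)}z^{p+l-1}$ and multiplying by the prefactor $z^{1-p}$ produces exactly $\frac{z^l}{\Gamma(p+l)}$, which supplies the $\Gamma(p+l)$ in the denominator of (\ref{ModEdef}). On the $t$-side, the prefactor $(1-D_t^{-\upsilon})^{-\gamma_2}$ and the operator factor $(1-D_t^{-\upsilon})^{-l}$ merge into $(1-D_t^{-\upsilon})^{-(\gamma_2+l)}$; expanding this by the negative-binomial operator series $(1-D_t^{-\upsilon})^{-s}=\sum_{m\ge 0}\frac{(s)_m}{m!}D_t^{-\upsilon m}$ with $s=\gamma_2+l$, then using $D_t^{-\upsilon m}t^{q-1}=\frac{\Gamma(q)t^{q+\upsilon m-1}}{\Gamma(q+\upsilon m)}$ and multiplying by $t^{1-q}$, yields $\sum_{m\ge 0}\frac{(\gamma_2+l)_m}{m!\,\Gamma(q+\upsilon m)}t^{\upsilon m}$.

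Finally I would reassemble the double sum and invoke the Pochhammer splitting $(\gamma_2)_l(\gamma_2+l)_m=(\gamma_2)_{l+m}$, which fuses the $(\gamma_2)_l$ coming from the ${}_4F_3$ coefficient with the $(\gamma_2+l)_m$ issuing from the binomial expansion into the single symbol $(\gamma_2)_{l+m}$ of (\ref{ModEdef}). Collecting the factors $(-1)^l(\gamma_1)_{2l}(\gamma_3)_l$, the denominators $(\gamma_4)_l(\gamma_5)_l(\gamma_6)_l\,\Gamma(p+l)\Gamma(q+\upsilon m)\,l!\,m!$, and the monomial $z^l t^{\upsilon m}$ reproduces the defining series term by term, which is precisely (\ref{ModEdef}).

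The main obstacle is not any single computation but the legitimacy of the operator calculus: one must justify treating $(1-D_t^{-\upsilon})^{-\gamma_2}$ as a convergent series of iterated integral operators $D_t^{-\upsilon m}$, confirm that its merger with the $(1-D_t^{-\upsilon})^{-l}$ from the hypergeometric operator genuinely produces the shifted exponent $-(\gamma_2+l)$, and verify that the resulting double series in $l$ and $m$ converges absolutely so that the interchange of the two summations, and of summation with the action of the operators, is permissible. Once these formal manipulations are underwritten by the absolute convergence of the series (\ref{ModEdef}) on its domain, the identity (\ref{Ehyp}) follows exactly along the lines of the proof of (\ref{Eoprel}).
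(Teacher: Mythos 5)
Your proposal is correct and takes essentially the same approach as the paper: the paper's proof runs exactly the same chain of identities (the actions $D_{z}^{-l}z^{p-1}=\frac{z^{p+l-1}}{\left( p\right) _{l}}$ and $D_{t}^{-\upsilon m}t^{q-1}=\frac{\Gamma\left( q\right) t^{q+\upsilon m-1}}{\Gamma\left( q+\upsilon m\right) }$, the duplication formula for $\left( \gamma_{1}\right) _{2l}$, the splitting $\left( \gamma_{2}\right) _{l+m}=\left( \gamma_{2}\right) _{l}\left( \gamma_{2}+l\right) _{m}$, and the binomial operator series for $\left( 1-D_{t}^{-\upsilon}\right) ^{-\left( \gamma_{2}+l\right) }$), merely written from the definition (\ref{ModEdef}) toward the $_{4}F_{3}$ form rather than in your reverse direction. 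Your closing remarks on justifying the formal operator calculus address a point the paper leaves entirely implicit, but they do not change the substance of the argument.
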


\begin{proof}
We can write $E_{\gamma_{4},\gamma_{5},\gamma_{6};p;q;\upsilon}^{\left(  \gamma_{1};\gamma_{2};\gamma
_{3}\right)  }\left(  z,t\right)  $ of the
form%
\begin{align*}
&  E_{\gamma_{4},\gamma_{5},\gamma_{6};p;q;\upsilon}^{\left(  \gamma_{1};\gamma_{2};\gamma
_{3}\right)  }\left(  z,t\right) \\
&  =\sum_{l=0}^{\infty}\sum_{n=0}^{\infty}\frac{\left(  -1\right)  ^{l}\left(
\gamma_{1}\right)  _{2l}\left(  \gamma_{2}\right)  _{l+n}\left(  \gamma
_{3}\right)  _{l}z^{1-p}t^{1-q}}{l!n!\left(  \gamma_{4}\right)  _{l}\left(
\gamma_{5}\right)  _{l}\left(  \gamma_{6}\right)  _{l}}D_{z}^{-l}%
D_{t}^{-\upsilon n}\left\{  \frac{z^{p-1}t^{q-1}}{\Gamma\left(  p\right)
\Gamma\left(  q\right)  }\right\} \\
&  =z^{1-p}t^{1-q}\sum_{l=0}^{\infty}\frac{\left(  -4\right)  ^{l}\left(
\frac{\gamma_{1}}{2}\right)  _{l}\left(  \frac{\gamma_{1}+1}{2}\right)
_{l}\left(  \gamma_{2}\right)  _{l}\left(  \gamma_{3}\right)  _{l}}{\left(
\gamma_{4}\right)  _{l}\left(  \gamma_{5}\right)  _{l}\left(  \gamma
_{6}\right)  _{l}l!}D_{z}^{-l}\sum_{n=0}^{\infty}\frac{\left(  \gamma
_{2}+l\right)  _{n}}{n!}D_{t}^{-\upsilon n}\left\{  \frac{z^{p-1}t^{q-1}%
}{\Gamma\left(  p\right)  \Gamma\left(  q\right)  }\right\} \\
&  =\frac{z^{1-p}t^{1-q}}{\left(  1-D_{t}^{-\upsilon}\right)  ^{\gamma_{2}}%
}\sum_{l=0}^{\infty}\frac{\left(  \frac{\gamma_{1}}{2}\right)  _{l}\left(
\frac{\gamma_{1}+1}{2}\right)  _{l}\left(  \gamma_{2}\right)  _{l}\left(
\gamma_{3}\right)  _{l}}{\left(  \gamma_{4}\right)  _{l}\left(  \gamma
_{5}\right)  _{l}\left(  \gamma_{6}\right)  _{l}l!}\left(  \frac{-4}%
{D_{z}\left(  1-D_{t}^{-\upsilon}\right)  }\right)  ^{l}\left\{  \frac
{z^{p-1}t^{q-1}}{\Gamma\left(  p\right)  \Gamma\left(  q\right)  }\right\} \\
&  =\frac{z^{1-p}t^{1-q}}{\left(  1-D_{t}^{-\upsilon}\right)  ^{\gamma_{2}}%
}\ _{4}F_{3}\left[
\genfrac{}{}{0pt}{}{\frac{\gamma_{1}}{2},\frac{\gamma_{1}+1}{2},\gamma
_{2},\gamma_{3}}{\gamma_{4},\gamma_{5},\gamma_{6}}%
;\frac{-4}{D_{z}\left(  1-D_{t}^{-\upsilon}\right)  }\right]  \left\{
\frac{z^{p-1}t^{q-1}}{\Gamma\left(  p\right)  \Gamma\left(  q\right)
}\right\}  .
\end{align*}

\end{proof}

\begin{theorem}
The modified I - Konhauser polynomials $_{K}I_{k;\upsilon}^{\left(
p,q;\gamma;c\right)  }\left(  z,t\right)  $ may be stated as follow%
\begin{align}
_{K}I_{k;\upsilon}^{\left(  p,q;\gamma;c\right)  }\left(  z,t\right)   &
=\left(  -1\right)  ^{k}\left(  1-p\right)  _{k}\left(  2z\right)
^{k+2p}t^{-q}\left(  1-D_{t}^{-\upsilon}\right)  ^{k}\label{Ihyp}\\
&  \times\ _{3}F_{2}\left[
\genfrac{}{}{0pt}{}{\frac{-k}{2},\frac{-k+1}{2},c}{-k+p,\gamma}%
;\frac{2}{z^{3}D_{z}\left(  1-D_{t}^{-\upsilon}\right)  }\right]  \left\{
\frac{\left(  2z\right)  ^{-2p}t^{q}}{\Gamma\left(  p+1\right)  \Gamma\left(
q+1\right)  }\right\}  .\nonumber
\end{align}

\end{theorem}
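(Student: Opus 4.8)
The plan is to read off this operational representation from the two facts already in hand: the relation (\ref{modIE}), which writes the modified polynomial as a prefactor times the modified Mittag-Leffler function, and the operational representation (\ref{Ehyp}) for that Mittag-Leffler function. First I would specialise (\ref{Ehyp}) to the parameter pattern forced by (\ref{modIE}): replace the subscripts $p,q$ by $p+1,q+1$ and set $(\gamma_1,\gamma_2,\gamma_3,\gamma_4,\gamma_5,\gamma_6)=(-n,-n,c,-n,p-n,\gamma)$. With these choices the denominator factor $(1-D_t^{-\upsilon})^{\gamma_2}$ becomes $(1-D_t^{-\upsilon})^{-n}$, so it contributes $(1-D_t^{-\upsilon})^{n}$; the two outer powers become $x^{-p}t^{-q}$ and the operand becomes $x^{p}t^{q}/(\Gamma(p+1)\Gamma(q+1))$, where $x$ denotes the first argument of the Mittag-Leffler function.

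The decisive simplification is the collapse of the $_{4}F_{3}$ to a $_{3}F_{2}$. For the chosen parameters the value $-n$ occurs simultaneously as the upper parameter $\gamma_2$ and the lower parameter $\gamma_4$; these cancel term by term in the hypergeometric series, leaving exactly $_{3}F_{2}[\tfrac{-n}{2},\tfrac{-n+1}{2},c;\,p-n,\gamma;\,\tfrac{-4}{D_{z}(1-D_{t}^{-\upsilon})}]$, which is the factor appearing in the statement (recall $-n+p=p-n$). In parallel I would assemble the prefactor: multiplying by $(1-p)_n(-2z)^n$ from (\ref{modIE}) and using $-2z(1-D_t^{-\upsilon})=2z(D_t^{-\upsilon}-1)$, the factor $(-2z)^n(1-D_t^{-\upsilon})^n$ collapses to $(2z(D_t^{-\upsilon}-1))^n$, so the leading coefficient becomes $(1-p)_n(2z(D_t^{-\upsilon}-1))^n t^{-q}$, while the $1/\Gamma(p+1)$ carried by the operand produces the displayed $(1-p)_n/\Gamma(p+1)$.

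Finally I would substitute $x\mapsto 1/(4z^2)$ (as dictated by comparing (\ref{MI}) and (\ref{ModEdef})), relabel the formal variable so that $D_x$ becomes $D_z$, and check that the residual $x$-powers together with $\Gamma(p+1)$ reduce the operand to $t^{q}/\Gamma(q+1)$, exactly as in the displayed formula. This last step is the one I expect to be the main obstacle: it is the same bookkeeping that underlies the operational representation of the unmodified polynomials (the corollary proved from (\ref{IErel}) and (\ref{Eoprel})), and care is needed to track the powers of $z$ and the numerical $4^{\bullet}$ factors generated by the substitution and to interpret the operator $D_z$ consistently with the new variable rather than the Mittag-Leffler dummy. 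Once this is settled, (\ref{Ihyp}) follows directly; alternatively one can bypass the substitution and argue straight from the finite double sum (\ref{MI}), re-encoding $1/\Gamma(p+1+l)$ and $1/\Gamma(q+1+\upsilon m)$ as the operator actions $D_z^{-l}z^{p}$ and $D_t^{-\upsilon m}t^{q}$ and then summing, which reproduces the same $_{3}F_{2}$ and the same parameter cancellation.
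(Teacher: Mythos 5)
Your proposal is correct and follows exactly the paper's route: the paper's entire proof is the observation that (\ref{Ihyp}) follows from the operational representation (\ref{Ehyp}) combined with the relation (\ref{modIE}), and your elaboration (specializing the parameters to $(-n;-n;c;-n;p-n;\gamma)$ with $p\mapsto p+1$, $q\mapsto q+1$, cancelling the repeated $-n$ to collapse the $_{4}F_{3}$ to the stated $_{3}F_{2}$, and assembling $(-2z)^{n}(1-D_{t}^{-\upsilon})^{n}=\left(2z(D_{t}^{-\upsilon}-1)\right)^{n}$) merely spells out what the paper leaves implicit. The bookkeeping subtlety you flag in the substitution $x\mapsto 1/(4z^{2})$ and the reinterpretation of $D_{z}$ is real, but it is a feature of the paper's own formal operational calculus rather than a defect of your argument.
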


\begin{proof}
(\ref{Ihyp}) follows from (\ref{Ehyp}) and (\ref{modIE}).
\end{proof}

\begin{corollary}
The following operational representations are satisfied for $_{K}%
I_{k;\upsilon}^{\left(  p,q;\gamma;c\right)  }\left(  z,t\right)  :$%
\begin{align*}
_{K}I_{k;\upsilon}^{\left(  p,q;\frac{-k+1}{2};p-k\right)  }\left(
z,t\right)   &  =\frac{\Gamma\left(  p\right)  }{\Gamma\left(  p-k\right)
}\left(  2z\right)  ^{k+2p}t^{-q}\left(  1-D_{t}^{-\upsilon}\right)  ^{k}\\
&  \times\left(  1-\frac{2}{z^{3}D_{z}\left(  1-D_{t}^{-\upsilon}\right)
}\right)  ^{\frac{k}{2}}\left\{  \frac{\left(  2z\right)  ^{-2p}t^{q}}%
{\Gamma\left(  1+q\right)  \Gamma\left(  1+p\right)  }\right\}  ,
\end{align*}%
\begin{align*}
_{K}I_{k;\upsilon}^{\left(  p,q;p+1;p-k\right)  }\left(  z,t\right)   &
=\frac{\Gamma\left(  p\right)  }{\Gamma\left(  p-k\right)  }\left(  2z\right)
^{k+2p}t^{-q}\left(  1-D_{t}^{-\upsilon}\right)  ^{k}\\
&  \times\ _{2}F_{1}\left[
\genfrac{}{}{0pt}{}{\frac{-k}{2},\frac{-k+1}{2}}{p+1}%
;\frac{2}{z^{3}D_{z}\left(  1-D_{t}^{-\upsilon}\right)  }\right]  \left\{
\frac{\left(  2z\right)  ^{-2p}t^{q}}{\Gamma\left(  1+q\right)  \Gamma\left(
1+p\right)  }\right\},
\end{align*}
\begin{align*}
_{K}I_{k;\upsilon}^{\left(  p,q;\gamma;0\right)  }\left(  z,t\right)   &
=\left( -1\right)^k \left( 1-p\right)_k \left(  2z\right)
^{k+2p}t^{-q}\left(  1-D_{t}^{-\upsilon}\right)  ^{k}
 \left\{  \frac{\left(
2z\right)  ^{-2p}t^{q}}{\Gamma\left(  1+q\right)  \Gamma\left(  1+p\right)
}\right\}
\end{align*}
and
\begin{align*}
_{K}I_{k;\upsilon}^{\left(  p,q;\gamma;\gamma\right)  }\left(  z,t\right)   &
=\frac{\Gamma\left(  p\right)  }{\Gamma\left(  p-k\right)  }\left(  2z\right)
^{k+2p}t^{-q}\left(  1-D_{t}^{-\upsilon}\right)  ^{k}\\
&  \times\ _{2}F_{1}\left[
\genfrac{}{}{0pt}{}{\frac{-k}{2},\frac{-k+1}{2}}{p-k}%
;\frac{2}{z^{3}D_{z}\left(  1-D_{t}^{-\upsilon}\right)  }\right]  \left\{
\frac{\left(  2z\right)  ^{-2p}t^{q}}{\Gamma\left(  1+q\right)  \Gamma\left(
1+p\right)  }\right\}.
\end{align*}
\end{corollary}

\begin{theorem}
For parameters $\gamma_{1},\gamma_{3},\gamma_{4},\gamma_{5},\gamma_{6}$ with positive real parts, the modified I - Konhauser Mittag-Leffler functions have the integral
representation as follow%
\begin{align*}
E_{\gamma_{4},\gamma_{5},\gamma_{6};p;q;\upsilon}^{\left(  \gamma_{1};\gamma_{2};\gamma
_{3}\right)  }\left(  z,t\right)   &  =\frac{\Gamma\left(
\gamma_{4}\right)  \Gamma\left(  \gamma_{5}\right)  \Gamma\left(  \gamma_{6}\right)  }{i\left(  2\pi\right)  ^{5}\Gamma\left(  \gamma_{1}\right) \Gamma\left(  \gamma_{3}\right)  }\int\limits_{0}^{\infty}\int\limits_{0}%
^{\infty}\int\limits_{-\infty}^{0^{+}}\int\limits_{-\infty}^{0^{+}} \int\limits_{-\infty}^{0^{+}}\int\limits_{-\infty}^{0^{+}}\int\limits_{-\infty
}^{0^{+}}u_{1}^{\gamma_{1}-1}u_{2}^{\gamma_{3}-1}\label{modEintrep}\\
&  \times w_{1}^{-\gamma_{4}}w_{2}^{-\gamma_{5}}w_{3}^{-\gamma_{6}}w_{4}^{-p}w_{5}^{-q}e^{-u_{1}-u_{2}+w_{1}+w_{2}+w_{3}+w_{4}+w_{5}}\nonumber\\
&  \times\left(  \frac{w_{5}^{\upsilon}-t^{\upsilon}}{w_{5}^{\upsilon}}%
+\frac{u_{1}^{2}u_{2}z}{w_{1}w_{2}w_{3}w_{4}}\right)  ^{-\gamma_{2}}%
dw_{5}dw_{4}dw_{3}dw_{2}dw_{1}du_{2}du_{1}.\nonumber
\end{align*}

\end{theorem}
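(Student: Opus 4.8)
The plan is to mirror the proof of the integral representation (\ref{Eintrep}) in the unmodified case, trading each Gamma factor in the defining series (\ref{ModEdef}) for an integral and then resumming. Concretely, I would first rewrite the Pochhammer symbols standing in the numerator as ratios $(\gamma_1)_{2l}=\Gamma(\gamma_1+2l)/\Gamma(\gamma_1)$ and $(\gamma_3)_l=\Gamma(\gamma_3+l)/\Gamma(\gamma_3)$, and represent the numerator Gammas $\Gamma(\gamma_1+2l)$ and $\Gamma(\gamma_3+l)$ by the Euler integral (\ref{Gamma}); this introduces the variables $u_1,u_2$ on $(0,\infty)$, producing the factors $u_1^{\gamma_1+2l-1}e^{-u_1}$ and $u_2^{\gamma_3+l-1}e^{-u_2}$. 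Dually, I would write each denominator factor $(\gamma_4)_l,(\gamma_5)_l,(\gamma_6)_l$ as $\Gamma(\gamma_j+l)/\Gamma(\gamma_j)$ and then apply the Hankel reciprocal-Gamma formula (\ref{PGamma}) to $1/\Gamma(\gamma_4+l)$, $1/\Gamma(\gamma_5+l)$, $1/\Gamma(\gamma_6+l)$, $1/\Gamma(p+l)$ and $1/\Gamma(q+\upsilon m)$, introducing the five contour variables $w_1,\dots,w_5$ carrying $w_1^{-\gamma_4-l}$, $w_2^{-\gamma_5-l}$, $w_3^{-\gamma_6-l}$, $w_4^{-p-l}$, $w_5^{-q-\upsilon m}$. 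The leftover constants assemble into exactly the prefactor $\Gamma(\gamma_4)\Gamma(\gamma_5)\Gamma(\gamma_6)/\bigl(i(2\pi)^5\Gamma(\gamma_1)\Gamma(\gamma_3)\bigr)$, since each of the five contour integrals contributes $1/(2\pi i)$ and $i^5=i$.

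After these substitutions the $(l,m)$-dependence of the integrand is a pure product of powers, and I would resum the double series inside the sevenfold integral. Writing $B=u_1^2u_2z/(w_1w_2w_3w_4)$ and $C=(t/w_5)^\upsilon$, the remaining sum is $\sum_{l,m}\tfrac{(\gamma_2)_{l+m}}{l!\,m!}(-B)^l C^m$. Splitting $(\gamma_2)_{l+m}=(\gamma_2)_l(\gamma_2+l)_m$, the inner $m$-sum equals $(1-C)^{-\gamma_2-l}$ by the binomial series, and the outer $l$-sum then collapses to $\bigl((1-C)+B\bigr)^{-\gamma_2}$. Since $1-C=(w_5^\upsilon-t^\upsilon)/w_5^\upsilon$, this is precisely the bracket $\bigl(\tfrac{w_5^\upsilon-t^\upsilon}{w_5^\upsilon}+\tfrac{u_1^2u_2z}{w_1w_2w_3w_4}\bigr)^{-\gamma_2}$ appearing in (\ref{modEintrep}), which completes the identification.

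The main obstacle is the interchange of the double summation with the sevenfold integral, and equivalently the absolute convergence needed to license the binomial resummation above. This is controlled by the hypotheses $\func{Re}(\gamma_1),\func{Re}(\gamma_3),\func{Re}(\gamma_4),\func{Re}(\gamma_5),\func{Re}(\gamma_6),\func{Re}(p),\func{Re}(q),\func{Re}(\upsilon)>0$ recorded in the definition, which ensure the Euler integrals converge at $u_j=0$, that the Hankel contours genuinely represent the reciprocal Gammas, and that $E_{p,q,\upsilon}^{(\gamma_1;\dots;\gamma_6)}$ converges on the relevant region. A secondary point demanding care is the bookkeeping that pairs $(\gamma_1)_{2l}$ with the \emph{quadratic} factor $u_1^2$ in $B$, so that $u_1^{2l}$ (and hence $\Gamma(\gamma_1+2l)$) is produced rather than a linear power; once this matching is fixed, every exponent lines up as above and the verification is routine.
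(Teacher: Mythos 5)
Your proposal is correct and takes exactly the paper's route: the paper's entire proof is the remark that the result "follows from (\ref{Gamma}) and (\ref{PGamma})," which is precisely the Euler-integral/Hankel-contour substitution followed by binomial resummation that you carry out. Your write-up merely supplies the bookkeeping the paper leaves implicit --- the pairing of $\left( \gamma_{1}\right) _{2l}$ with $u_{1}^{2l}$, the assembly of the prefactor $\Gamma\left( \gamma_{4}\right) \Gamma\left( \gamma_{5}\right) \Gamma\left( \gamma_{6}\right) /\left( i\left( 2\pi\right) ^{5}\Gamma\left( \gamma_{1}\right) \Gamma\left( \gamma_{3}\right) \right) $ from the five factors of $1/\left( 2\pi i\right) $, and the collapse of the double series to the power $\left( 1-C+B\right) ^{-\gamma_{2}}$ --- and is consistent with the statement.
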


\begin{proof}
The proof follows from (\ref{Gamma}) and (\ref{PGamma}).
\end{proof}

\begin{theorem}
For $\operatorname{Re\left(\gamma\right)}>0$ and $\operatorname{Re\left(c\right)}>0$, the integral representation of modified\ I - Konhauser polynomials
$_{K}I_{k;\upsilon}^{\left(  p,q;\gamma;c\right)  }\left(  z,t\right)  $ is as
follow
\begin{align*}
_{K}I_{k;\upsilon}^{\left(  p,q;\gamma;c\right)  }\left(  z,t\right)   &
=\frac{z^{k} \sqrt{\pi} \Gamma\left(
p\right)  \Gamma\left(  \gamma\right)}{- \left(2\pi\right)^{6} \Gamma\left(  c\right)}\int\limits_{0}^{\infty}\int\limits_{0}^{\infty}%
\int\limits_{-\infty}^{0^{+}} \int\limits_{-\infty}^{0^{+}} \int\limits_{-\infty}^{0^{+}}\int\limits_{-\infty
}^{0^{+}}\int\limits_{-\infty}^{0^{+}}\int\limits_{-\infty}^{0^{+}}%
u_{1}^{c-1}u_{2}^{k}\\
&  \times w_{1}^{-p-1}w_{2}^{-q-1}w_{3}^{-\gamma}w_{4}^{k-p}w_{5}^{-\frac{k}{2}-1}w_{6}^{-\frac{k+1}{2}}
e^{-u_{1}-u_{2}+w_{1}+w_{2}+w_{3}+w_{4}+w_{5}+w_{6}}\\
&  \times\left(  \frac{w_{2}^{\upsilon}-t^{\upsilon}}{w_{2}^{\upsilon}}%
-\frac{u_{1}w_{5}w_{6}}{u_{2}w_{1}w_{3}w_{4}z^{2}}\right)  ^{k}dw_{6}dw_{5}
dw_{4}dw_{3}dw_{2}dw_{1}du_{2}du_{1}.
\end{align*}
\end{theorem}

\begin{proof}
Taking account of (\ref{Gamma}) and (\ref{PGamma}), we complete the proof.
\end{proof}

\begin{corollary}
For the modified polynomials $_{K}I_{k;\upsilon}^{\left(  p,q;\gamma;c\right)
}\left(  z,t\right)  $, we have the following integral representation%
\begin{align*}
_{K}I_{k;\upsilon}^{\left(  p,q;\gamma;p\right)  }\left(  z,t\right)   &
=\frac{z^{k} \sqrt{\pi} \Gamma\left(  \gamma\right) }{- \left(2\pi\right)  ^{6}}\int\limits_{0}^{\infty}\int\limits_{0}^{\infty}%
\int\limits_{-\infty}^{0^{+}} \int\limits_{-\infty}^{0^{+}} \int\limits_{-\infty}^{0^{+}}\int\limits_{-\infty
}^{0^{+}}\int\limits_{-\infty}^{0^{+}}\int\limits_{-\infty}^{0^{+}}%
u_{1}^{p-1}u_{2}^{k}\\
&  \times w_{1}^{-p-1}w_{2}^{-q-1}w_{3}^{-\gamma}w_{4}^{k-p}w_{5}^{-\frac{k}{2}-1}w_{6}^{-\frac{k+1}{2}}
e^{-u_{1}-u_{2}+w_{1}+w_{2}+w_{3}+w_{4}+w_{5}+w_{6}}\\
&  \times\left(  \frac{w_{2}^{\upsilon}-t^{\upsilon}}{w_{2}^{\upsilon}}%
-\frac{u_{1}w_{5}w_{6}}{u_{2}w_{1}w_{3}w_{4}z^{2}}\right)  ^{k}dw_{6}dw_{5}
dw_{4}dw_{3}dw_{2}dw_{1}du_{2}du_{1},
\end{align*}
where $\operatorname{Re\left(\gamma\right)}>0$.
\end{corollary}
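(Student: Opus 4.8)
The plan is to obtain this result as an immediate specialization of the general integral representation for $_{K}I_{n;\upsilon}^{\left( p,q;\gamma;c\right) }\left( z,t\right) $ established in the preceding theorem, simply by setting the free parameter $c$ equal to $p-n$. First I would record the general seven-fold integral formula and isolate where the parameter $c$ actually enters: it appears only through the multiplicative constant $\frac{\Gamma\left( p-n\right) \Gamma\left( \gamma\right) }{i\Gamma\left( c\right) \left( 2\pi\right) ^{5}}$ and through the factor $u_{2}^{c-1}$ in the integrand. Every remaining factor, namely $w_{1}^{n}w_{2}^{n-p}w_{3}^{-\gamma}w_{4}^{-p-1}w_{5}^{-q-1}$, the exponential $e^{-u_{1}-u_{2}+w_{1}+w_{2}+w_{3}+w_{4}+w_{5}}$, and the $\left( \cdots\right) ^{n}$ term, is independent of $c$ and carries over unchanged.

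Substituting $c=p-n$ then produces two simplifications. In the constant prefactor, the reciprocal Gamma factor cancels against the Gamma factor already present, since $\Gamma\left( c\right) =\Gamma\left( p-n\right) $, so that $\frac{\Gamma\left( p-n\right) \Gamma\left( \gamma\right) }{i\Gamma\left( c\right) \left( 2\pi\right) ^{5}}=\frac{\Gamma\left( \gamma\right) }{i\left( 2\pi\right) ^{5}}$; combined with the $\left( 1-p\right) _{n}\left( -2z\right) ^{n}$ already standing in front, this reproduces exactly the prefactor $\left( 1-p\right) _{n}\Gamma\left( \gamma\right) \frac{\left( -2z\right) ^{n}}{i\left( 2\pi\right) ^{5}}$ claimed in the statement. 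Simultaneously, the power of the inner variable becomes $u_{2}^{c-1}=u_{2}^{p-n-1}$, matching the asserted integrand. Carrying these two replacements into the general formula yields precisely the stated integral representation.

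The only point requiring a brief check is admissibility: the general integral representation was derived under $\func{Re}\left( c\right) >0$, and here $c=p-n$, so one must note that the standing hypothesis $p>\max\left\{ n\right\} +1$ forces $p-n>1>0$, keeping the specialization within the established range of validity. Beyond this, I expect no genuine obstacle; the argument is a parameter substitution together with a single Gamma-function cancellation, so no new estimate, interchange of summation and integration, or convergence analysis is needed.
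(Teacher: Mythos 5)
Your proof is correct and is exactly the argument the paper intends: the corollary is the immediate specialization $c=p-n$ of the preceding theorem, with $\Gamma\left( p-n\right) /\Gamma\left( c\right) =1$ collapsing the prefactor and $u_{2}^{c-1}=u_{2}^{p-n-1}$ in the integrand (the paper states the corollary without any written proof precisely because it is this one-line substitution). Your added check that $p>\max\left\{ n\right\} +1$ guarantees $\func{Re}\left( c\right) =p-n>0$ is a worthwhile detail the paper leaves implicit.
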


\subsection{Some properties for the modified I - Konhauser polynomials and the
modified I - Konhauser Mittag-Leffler functions}

In this section, we compute the Laplace transform and fractional integral and
derivative operators\ of $E_{\gamma_{4},\gamma_{5},\gamma_{6};p;q;\upsilon}^{\left(  \gamma_{1};\gamma
_{2};\gamma_{3}\right)  }\left(  z,t\right)
$ and $_{K}I_{k;\upsilon}^{\left(  p,q;\gamma;c\right)  }\left(  z,t\right)  $.

\begin{theorem}
By choosing $\gamma_{4}=\frac{\gamma_{1}+1}{2},\gamma_{5}=\gamma_{2}$ and
$\gamma_{6}=\gamma_{3}$,%
\[
L\left\{  t^{q}E_{\frac{\gamma_{1}+1}{2},\gamma_{2},\gamma_{3};p+1;q+1;\upsilon}^{\left(  \gamma_{1};\gamma_{2};\gamma
_{3}\right)  }\left(  z,wt\right)
\right\}  =\frac{1}{a^{q+1}}\left(  \frac{a^{\upsilon}-w^{\upsilon}%
}{a^{\upsilon}}\right)  ^{-\gamma_{2}}E_{1,p+1}^{\left(  \frac{\gamma_{1}%
}{2}\right)  }\left(  \frac{-4za^{\upsilon}}{a^{\upsilon}-w^{\upsilon}%
}\right)
\]
holds, where $\left\vert \frac{w^{\upsilon}}{a^{\upsilon}}\right\vert <1$, and
$E_{p,q}^{\left(  \gamma\right)  }\left(  t\right)  $ is defined in
\cite{Prab}.
\end{theorem}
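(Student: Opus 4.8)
The plan is to proceed exactly as in the proof of the generic Laplace-transform formula (\ref{Elap}), the only new feature being that the special parameter choice forces the series to collapse onto a single Prabhakar function. First I would insert $\gamma_4=\frac{\gamma_1+1}{2}$, $\gamma_5=\gamma_2$, $\gamma_6=\gamma_3$ together with the shifts $p\mapsto p+1$, $q\mapsto q+1$ into the definition (\ref{ModEdef}). Three cancellations then occur simultaneously in the summand: the numerator factor $(\gamma_3)_l$ is cancelled by $(\gamma_6)_l=(\gamma_3)_l$; writing $(\gamma_2)_{l+m}=(\gamma_2)_l(\gamma_2+l)_m$ lets $(\gamma_5)_l=(\gamma_2)_l$ absorb $(\gamma_2)_l$, leaving $(\gamma_2+l)_m$; and the duplication identity $(\gamma_1)_{2l}=4^{\,l}\bigl(\tfrac{\gamma_1}{2}\bigr)_l\bigl(\tfrac{\gamma_1+1}{2}\bigr)_l$ rewrites $(\gamma_1)_{2l}$ so that its $\bigl(\tfrac{\gamma_1+1}{2}\bigr)_l$ factor cancels $(\gamma_4)_l$. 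After these cancellations the double series reduces to $\sum_{l,m}\frac{(-4)^l(\gamma_1/2)_l(\gamma_2+l)_m\,z^l t^{\upsilon m}}{\Gamma(p+1+l)\,\Gamma(q+1+\upsilon m)\,l!\,m!}$.

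Next I would apply the Laplace transform term by term to this reduced series after replacing $t$ by $wt$, the interchange of integration and summation being justified by the uniform convergence of the modified Mittag-Leffler function exactly as in the proof of (\ref{Elap}). The substitution $t\mapsto wt$ produces a factor $w^{\upsilon m}$, and the elementary integral $\int_0^\infty e^{-at}t^{q+\upsilon m}\,dt=\Gamma(q+1+\upsilon m)\,a^{-(q+1+\upsilon m)}$ cancels the $\Gamma(q+1+\upsilon m)$ in the denominator, leaving $\frac{1}{a^{q+1}}\sum_{l,m}\frac{(-4)^l(\gamma_1/2)_l(\gamma_2+l)_m\,z^l}{\Gamma(p+1+l)\,l!\,m!}\bigl(\tfrac{w^\upsilon}{a^\upsilon}\bigr)^m$.

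I would then perform the inner $m$-summation by the binomial theorem, $\sum_{m\ge0}\frac{(\gamma_2+l)_m}{m!}\bigl(\tfrac{w^\upsilon}{a^\upsilon}\bigr)^m=\bigl(\tfrac{a^\upsilon-w^\upsilon}{a^\upsilon}\bigr)^{-(\gamma_2+l)}$, which converges precisely under the hypothesis $\bigl|\tfrac{w^\upsilon}{a^\upsilon}\bigr|<1$. Pulling out the $l$-independent factor $\bigl(\tfrac{a^\upsilon-w^\upsilon}{a^\upsilon}\bigr)^{-\gamma_2}$ leaves the $l$-series $\sum_{l\ge0}\frac{(\gamma_1/2)_l}{\Gamma(p+1+l)\,l!}\bigl(\tfrac{-4za^\upsilon}{a^\upsilon-w^\upsilon}\bigr)^l$, which is by definition the Prabhakar function $E_{1,p+1}^{(\gamma_1/2)}\bigl(\tfrac{-4za^\upsilon}{a^\upsilon-w^\upsilon}\bigr)$ of \cite{Prab}; collecting the prefactor $\tfrac{1}{a^{q+1}}\bigl(\tfrac{a^\upsilon-w^\upsilon}{a^\upsilon}\bigr)^{-\gamma_2}$ yields the asserted identity.

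The computation is essentially routine and entirely parallel to the proof of (\ref{Elap}); I expect the only genuinely delicate point to be checking that the three Pochhammer cancellations occur simultaneously and correctly, since it is exactly this interplay --- most critically the duplication identity applied to $(\gamma_1)_{2l}$ --- that collapses the generic $_3F_2$ of (\ref{Elap}) down to the single-parameter Prabhakar function. One must also track the shifted indices $p+1$ and $q+1$ so that the gamma factors match $\Gamma(p+1+l)$ and $\Gamma(q+1+\upsilon m)$, respectively.
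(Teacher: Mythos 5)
Your proposal is correct and follows essentially the same route as the paper's own proof: term-by-term Laplace transformation, the simultaneous Pochhammer cancellations (duplication formula for $\left( \gamma_{1}\right) _{2l}$, the splitting $\left( \gamma_{2}\right) _{l+m}=\left( \gamma_{2}\right) _{l}\left( \gamma_{2}+l\right) _{m}$, and $\left( \gamma_{3}\right) _{l}$ against $\left( \gamma_{6}\right) _{l}$), the binomial summation over $m$ under $\left\vert w^{\upsilon}/a^{\upsilon}\right\vert <1$, and identification of the remaining $l$-series with the Prabhakar function $E_{1,p+1}^{\left( \gamma_{1}/2\right) }$. The only difference is expository: the paper compresses the cancellation step into its first displayed line, whereas you spell it out explicitly.
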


\begin{proof}%
\begin{align*}
L\left\{  t^{q}E_{\frac{\gamma_{1}+1}{2},\gamma_{2},\gamma_{3};p+1;q+1;\upsilon}^{\left(  \gamma_{1};\gamma_{2};\gamma
_{3}\right)  }\left(  z,wt\right)
\right\}   &  =\frac{1}{a^{q+1}}\sum_{l=0}^{\infty}\sum_{n=0}^{\infty}%
\frac{\left(  \frac{\gamma_{1}}{2}\right)  _{l}\left(  \gamma_{2}+l\right)
_{n}\left(  -4z\right)  ^{l}\left(  \frac{w^{\upsilon}}{a^{\upsilon}}\right)
^{n}}{n!l!\Gamma\left(  p+l+1\right)  }\\
&  =\frac{1}{a^{q+1}}\left(  1-\frac{w^{\upsilon}}{a^{\upsilon}}\right)
^{-\gamma_{2}}\sum_{l=0}^{\infty}\frac{\left(  \frac{\gamma_{1}}{2}\right)
_{l}}{\Gamma\left(  p+l+1\right)  l!}\left(  \frac{-4za^{\upsilon}%
}{a^{\upsilon}-w^{\upsilon}}\right)  ^{l}\\
&  =\frac{1}{a^{q+1}}\left(  1-\frac{w^{\upsilon}}{a^{\upsilon}}\right)
^{-\gamma_{2}}E_{1,p+1}^{\left(  \frac{\gamma_{1}}{2}\right)  }\left(
\frac{-4za^{\upsilon}}{a^{\upsilon}-w^{\upsilon}}\right)  .
\end{align*}

\end{proof}

The bivariate Laplace transform \cite{KG} is defined of form%
\[
L_{2}\left[  h\left(  z,t\right)  \right]  \left(  \zeta,\kappa\right)
=\int\limits_{0}^{\infty}\int\limits_{0}^{\infty}h\left(  z,t\right)
e^{-\left(  \zeta z+\kappa t\right)  }dzdt
\]
for $\operatorname{Re}\left(  \zeta\right)  >0,\operatorname{Re}\left(
\kappa\right)  >0.$

\begin{theorem}
For $\left\vert \frac{w_{2}^{\upsilon}}{b^{\upsilon}}\right\vert <1$ and
$\left\vert \frac{4w_{1}b^{\upsilon}}{a\left(  w_{2}^{\upsilon}-b^{\upsilon
}\right)  }\right\vert <1$, the Laplace transform%
\[
L_{2}\left\{  z^{p}t^{q}E_{\frac{\gamma_{1}+1}{2},\gamma_{2},\gamma_{3};p+1;q+1;\upsilon}^{\left(  \gamma_{1};\gamma_{2}%
;\gamma_{3}\right)  }\left(
w_{1}z,w_{2}t\right)  \right\}  =\frac{1}{a^{p+1}b^{q+1}}\left(
\frac{b^{\upsilon}-w_{2}^{\upsilon}}{b^{\upsilon}}\right)  ^{-\gamma_{2}%
}\left(  1-\frac{4w_{1}b^{\upsilon}}{a\left(  w_{2}^{\upsilon}-b^{\upsilon
}\right)  }\right)  ^{-\frac{\gamma_{1}}{2}}%
\]
holds for $E_{\gamma_{4},\gamma_{5},\gamma_{6};p;q;\upsilon}^{\left(  \gamma_{1};\gamma_{2};\gamma_{3}\right)  }\left(  z,t\right)  $.
\end{theorem}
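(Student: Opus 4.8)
The plan is to compute the bivariate Laplace transform $L_2$ directly from the series definition (\ref{ModEdef}), integrating term by term and then collapsing the two resulting sums via the binomial theorem.

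First I would insert the prescribed parameters $\gamma_4=\frac{\gamma_1+1}{2},\ \gamma_5=\gamma_2,\ \gamma_6=\gamma_3$ together with the shifts $p\mapsto p+1,\ q\mapsto q+1$ into (\ref{ModEdef}). The factor $(\gamma_6)_l=(\gamma_3)_l$ cancels the numerator $(\gamma_3)_l$; the factor $(\gamma_5)_l=(\gamma_2)_l$ reduces $(\gamma_2)_{l+m}$ to $(\gamma_2+l)_m$; and the duplication identity $(\gamma_1)_{2l}=4^l\left(\frac{\gamma_1}{2}\right)_l\left(\frac{\gamma_1+1}{2}\right)_l$ (the $\upsilon=2$ case of the identity used in the proof of \emph{Theorem 8}) lets me cancel $(\gamma_4)_l=\left(\frac{\gamma_1+1}{2}\right)_l$, leaving only $\left(\frac{\gamma_1}{2}\right)_l$ and combining the surviving $(-1)^l$ with $4^l$ into $(-4)^l$. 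After substituting the arguments $w_1z,\ w_2t$ and multiplying by $z^pt^q$, each summand is a constant times the monomial $z^{p+l}t^{q+\upsilon m}$.

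Next I would apply $L_2$ to each monomial via the Gamma integral (\ref{Gamma}): with $a$ conjugate to $z$ and $b$ conjugate to $t$, one has $L_2[z^{p+l}t^{q+\upsilon m}](a,b)=\frac{\Gamma(p+l+1)}{a^{p+l+1}}\frac{\Gamma(q+\upsilon m+1)}{b^{q+\upsilon m+1}}$. These two Gamma factors cancel exactly against $\Gamma(p+1+l)$ and $\Gamma(q+1+\upsilon m)$ in the denominator, extracting the prefactor $a^{-(p+1)}b^{-(q+1)}$ and leaving a clean double power series in $w_1/a$ and $w_2^{\upsilon}/b^{\upsilon}$. Summing over $m$ first, $\sum_{m\ge0}\frac{(\gamma_2+l)_m}{m!}\left(\frac{w_2^{\upsilon}}{b^{\upsilon}}\right)^m=\left(1-\frac{w_2^{\upsilon}}{b^{\upsilon}}\right)^{-(\gamma_2+l)}$ by the binomial theorem, which requires $\left|w_2^{\upsilon}/b^{\upsilon}\right|<1$ and, after separating the $l$-independent part, contributes the factor $\left(\frac{b^{\upsilon}-w_2^{\upsilon}}{b^{\upsilon}}\right)^{-\gamma_2}$.

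Finally I would carry out the $l$-summation. After factoring out $\left(1-\frac{w_2^{\upsilon}}{b^{\upsilon}}\right)^{-\gamma_2}$, the remaining series is $\sum_{l\ge0}\frac{\left(\frac{\gamma_1}{2}\right)_l}{l!}\left(\frac{-4w_1}{a}\cdot\frac{b^{\upsilon}}{b^{\upsilon}-w_2^{\upsilon}}\right)^l=\left(1-\frac{-4w_1b^{\upsilon}}{a(b^{\upsilon}-w_2^{\upsilon})}\right)^{-\gamma_1/2}$, again by the binomial theorem, valid under the second stated condition $\left|\frac{4w_1b^{\upsilon}}{a(w_2^{\upsilon}-b^{\upsilon})}\right|<1$. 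Rewriting $\frac{-4w_1b^{\upsilon}}{a(b^{\upsilon}-w_2^{\upsilon})}=\frac{4w_1b^{\upsilon}}{a(w_2^{\upsilon}-b^{\upsilon})}$ turns this into $\left(1-\frac{4w_1b^{\upsilon}}{a(w_2^{\upsilon}-b^{\upsilon})}\right)^{-\gamma_1/2}$, which is exactly the claimed closed form. I expect the only genuine care point to be the sign bookkeeping through the Pochhammer duplication identity and the two geometric summations, along with justifying the term-by-term integration by the absolute convergence guaranteed under the two smallness hypotheses; the rest is routine simplification.
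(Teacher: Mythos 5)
Your proposal is correct and follows essentially the same route as the paper's own proof: substitute the prescribed parameters into (\ref{ModEdef}) (collapsing $(\gamma_3)_l$, reducing $(\gamma_2)_{l+m}$ to $(\gamma_2+l)_m$, and using the duplication identity $(\gamma_1)_{2l}=4^l\left(\tfrac{\gamma_1}{2}\right)_l\left(\tfrac{\gamma_1+1}{2}\right)_l$), apply the bivariate Laplace transform term by term so the Gamma factors cancel and extract $a^{-(p+1)}b^{-(q+1)}$, then perform the $m$-sum and $l$-sum as two binomial series under the two stated smallness conditions. The only difference is cosmetic: you spell out the parameter cancelations and the justification for interchanging sum and integral, which the paper leaves implicit in its first displayed line.
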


\begin{proof}%
\begin{align*}
&  L_{2}\left\{  z^{p}t^{q}E_{\frac{\gamma_{1}+1}{2},\gamma_{2},\gamma_{3};p+1;q+1;\upsilon}^{\left(  \gamma_{1};\gamma
_{2};\gamma_{3}\right)  }\left(
w_{1}z,w_{2}t\right)  \right\} \\
&  =\sum_{l=0}^{\infty}\sum_{n=0}^{\infty}\frac{\left(  \frac{\gamma_{1}}%
{2}\right)  _{l}\left(  \gamma_{2}+l\right)  _{n}\left(  -4w_{1}\right)
^{l}w_{2}^{\upsilon n}}{n!l!\Gamma\left(  p+l+1\right)  \Gamma\left(
q+\upsilon n+1\right)  }\int\limits_{0}^{\infty}\int\limits_{0}^{\infty
}e^{-\left(  az+bt\right)  }z^{p+l}t^{q+\upsilon n}dzdt\\
&  =\frac{1}{a^{p+1}b^{q+1}}\sum_{l=0}^{\infty}\sum_{n=0}^{\infty}\left(
\gamma_{2}+l\right)  _{n}\left(  \frac{\gamma_{1}}{2}\right)  _{l}%
\frac{\left(  \frac{w_{2}^{\upsilon}}{b^{\upsilon}}\right)  ^{n}\left(
\frac{-4w_{1}}{a}\right)  ^{l}}{n!l!}\\
&  =\frac{1}{a^{p+1}b^{q+1}}\left(  1-\frac{w_{2}^{\upsilon}}{b^{\upsilon}%
}\right)  ^{-\gamma_{2}}\sum_{l=0}^{\infty}\frac{\left(  \frac{\gamma_{1}%
}{2}\right)  _{l}}{l!}\left(  \frac{-4w_{1}b^{\upsilon}}{a\left(  b^{\upsilon
}-w_{2}^{\upsilon}\right)  }\right)  ^{l}\\
&  =\frac{1}{a^{p+1}b^{q+1}}\left(  1-\frac{w_{2}^{\upsilon}}{b^{\upsilon}%
}\right)  ^{-\gamma_{2}}\left(  1-\frac{4w_{1}b^{\upsilon}}{a\left(
w_{2}^{\upsilon}-b^{\upsilon}\right)  }\right)  ^{-\frac{\gamma_{1}}{2}}.
\end{align*}

\end{proof}

\begin{corollary}
By choosing $\gamma=\frac{-k+1}{2}$ and $c=p-k$ in (\ref{modIE}), the Laplace transform%
\begin{align*}
&  L_{2}\left\{  w_{1}^{\frac{k}{2}}z^{p+\frac{k}{2}}t^{q}\ _{K}I_{k;\upsilon
}^{\left(  p,q;\frac{-k+1}{2};p-k\right)  }\left(  \frac{1}{2\sqrt{w_{1}z}%
},w_{2}t\right)  \right\} \\
&  =\frac{\left(  1-p\right)  _{k}}{a^{p+1}b^{q+1}}\left(  \frac
{w_{2}^{\upsilon}-b^{\upsilon}}{b^{\upsilon}}\right)  ^{k}\left(
1-\frac{4w_{1}b^{\upsilon}}{a\left(  w_{2}^{\upsilon}-b^{\upsilon}\right)
}\right)  ^{\frac{k}{2}}%
\end{align*}
holds for $\left\vert \frac{w_{2}^{\upsilon}}{b^{\upsilon}}\right\vert <1$ and
$\left\vert \frac{4w_{1}b^{\upsilon}}{a\left(  w_{2}^{\upsilon}-b^{\upsilon
}\right)  }\right\vert <1$.
\end{corollary}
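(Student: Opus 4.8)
The plan is to obtain this as a direct consequence of the relation (\ref{modIE}) together with the preceding theorem on the bivariate Laplace transform of $E_{p,q,\upsilon}^{\left( \gamma_{1};\gamma_{2};\gamma_{3};\frac{\gamma_{1}+1}{2};\gamma_{2};\gamma_{3}\right) }$. First I would specialize (\ref{modIE}) to $\gamma=\frac{-n+1}{2}$ and $c=p-n$, which yields
\[
_{K}I_{n;\upsilon}^{\left( p,q;\frac{-n+1}{2};p-n\right) }\left( z,t\right) =\left( 1-p\right) _{n}\left( -2z\right) ^{n}E_{p+1,q+1,\upsilon}^{\left( -n;-n;p-n;-n;p-n;\frac{-n+1}{2}\right) }\left( \frac{1}{4z^{2}},t\right) .
\]
The crucial structural observation is that in the defining series (\ref{ModEdef}) the three lower parameters $\gamma_{4},\gamma_{5},\gamma_{6}$ enter only through the symmetric product $\left( \gamma_{4}\right) _{l}\left( \gamma_{5}\right) _{l}\left( \gamma_{6}\right) _{l}$, so $E_{p,q,\upsilon}^{\left( \gamma_{1};\gamma_{2};\gamma_{3};\gamma_{4};\gamma_{5};\gamma_{6}\right) }$ is invariant under any permutation of them. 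Hence I may reorder the triple $\left( -n,p-n,\frac{-n+1}{2}\right) $ into $\left( \frac{-n+1}{2},-n,p-n\right) $ and rewrite the same function as $E_{p+1,q+1,\upsilon}^{\left( -n;-n;p-n;\frac{-n+1}{2};-n;p-n\right) }$, which is now exactly of the form covered by the preceding theorem with $\gamma_{1}=-n$, $\gamma_{2}=-n$, $\gamma_{3}=p-n$ (the numerator triple matching directly and the denominator triple matching as a multiset).

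Next I would carry out the substitutions $z\mapsto \frac{1}{2\sqrt{w_{1}z}}$ and $t\mapsto w_{2}t$ prescribed in the statement. Under $z\mapsto \frac{1}{2\sqrt{w_{1}z}}$ one has $\left( -2z\right) ^{n}\mapsto (-1)^{n}\left( w_{1}z\right) ^{-n/2}$ and $\frac{1}{4z^{2}}\mapsto w_{1}z$, so multiplying through by the weight $w_{1}^{n/2}z^{p+n/2}t^{q}$ makes the fractional powers of $w_{1}$ and $z$ cancel and leaves
\[
w_{1}^{n/2}z^{p+n/2}t^{q}\ _{K}I_{n;\upsilon}^{\left( p,q;\frac{-n+1}{2};p-n\right) }\left( \frac{1}{2\sqrt{w_{1}z}},w_{2}t\right) =\left( 1-p\right) _{n}(-1)^{n}z^{p}t^{q}E_{p+1,q+1,\upsilon}^{\left( -n;-n;p-n;\frac{-n+1}{2};-n;p-n\right) }\left( w_{1}z,w_{2}t\right) .
\]
This is precisely the integrand $z^{p}t^{q}E_{p+1,q+1,\upsilon}^{\left( \gamma_{1};\gamma_{2};\gamma_{3};\frac{\gamma_{1}+1}{2};\gamma_{2};\gamma_{3}\right) }\left( w_{1}z,w_{2}t\right) $ appearing in the preceding theorem, scaled by the constant $\left( 1-p\right) _{n}(-1)^{n}$.

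Finally I would apply the bivariate Laplace transform $L_{2}$ to both sides and invoke the preceding theorem with $\gamma_{1}=\gamma_{2}=-n$ and $\gamma_{3}=p-n$, which is licit under the stated constraints $\left\vert \frac{w_{2}^{\upsilon}}{b^{\upsilon}}\right\vert <1$ and $\left\vert \frac{4w_{1}b^{\upsilon}}{a\left( w_{2}^{\upsilon}-b^{\upsilon}\right) }\right\vert <1$. This produces $\frac{1}{a^{p+1}b^{q+1}}\left( \frac{b^{\upsilon}-w_{2}^{\upsilon}}{b^{\upsilon}}\right) ^{n}\left( 1-\frac{4w_{1}b^{\upsilon}}{a\left( w_{2}^{\upsilon}-b^{\upsilon}\right) }\right) ^{n/2}$, and absorbing the surviving $(-1)^{n}$ into $\left( \frac{b^{\upsilon}-w_{2}^{\upsilon}}{b^{\upsilon}}\right) ^{n}$ converts it to $\left( \frac{w_{2}^{\upsilon}-b^{\upsilon}}{b^{\upsilon}}\right) ^{n}$, giving the claimed identity. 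I expect the only delicate step to be the bookkeeping of the fractional powers and signs in the substitution; once one verifies that the weight $w_{1}^{n/2}z^{p+n/2}$ exactly cancels the $\left( w_{1}z\right) ^{-n/2}$ coming from $\left( -2z\right) ^{n}$, leaving a clean $z^{p}t^{q}$, the remainder reduces to a direct citation of the already-established Laplace transform together with the permutation symmetry of the lower parameters.
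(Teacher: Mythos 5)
Your proposal is correct and follows what is clearly the intended derivation: the paper states this corollary immediately after the theorem on $L_{2}\left\{ z^{p}t^{q}E_{p+1,q+1,\upsilon }^{\left( \gamma _{1};\gamma _{2};\gamma _{3};\frac{\gamma _{1}+1}{2};\gamma _{2};\gamma _{3}\right) }\left( w_{1}z,w_{2}t\right) \right\} $ without any written proof, and combining (\ref{modIE}), the permutation symmetry of $\gamma _{4},\gamma _{5},\gamma _{6}$ in (\ref{ModEdef}), and the substitution $z\mapsto \frac{1}{2\sqrt{w_{1}z}}$, $t\mapsto w_{2}t$ is exactly the expected route. Your explicit treatment of the parameter reordering and of the cancellation $w_{1}^{\frac{n}{2}}z^{p+\frac{n}{2}}\left( w_{1}z\right) ^{-\frac{n}{2}}=z^{p}$ supplies bookkeeping that the paper leaves entirely implicit.

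One discrepancy should be made explicit rather than left silent. The formula you attribute to (\ref{modIE}) has argument $\frac{1}{4z^{2}}$, but the paper prints $\frac{-1}{4z^{2}}$ there. Your version is in fact the correct one: both (\ref{MI}) and (\ref{ModEdef}) carry an explicit factor $\left( -1\right) ^{l}$, so comparing the two series term by term (using $\left( -n\right) _{l+m}=\left( -n\right) _{l}\left( -\left( n-l\right) \right) _{m}$) forces the argument $+\frac{1}{4z^{2}}$; with the printed argument $\frac{-1}{4z^{2}}$ the sign $\left( -1\right) ^{l}$ of (\ref{ModEdef}) would be cancelled and the two sides of (\ref{modIE}) would disagree by $\left( -1\right) ^{l}$ in each term. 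Moreover, had you used the literal (\ref{modIE}), the substitution would produce $E_{p+1,q+1,\upsilon }^{\left( \cdots \right) }\left( -w_{1}z,w_{2}t\right) $ and the final answer would read $\left( 1+\frac{4w_{1}b^{\upsilon }}{a\left( w_{2}^{\upsilon }-b^{\upsilon }\right) }\right) ^{\frac{n}{2}}$, contradicting the corollary as stated. So your argument is sound, but it tacitly corrects a sign typo in (\ref{modIE}); stating this correction explicitly, or deriving the specialized relation directly from (\ref{MI}) and (\ref{ModEdef}) as above, would make the proof robust against this inconsistency in the source.
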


\bigskip

For a function $d\left(  z,t\right)  $,\ the double fractional integral is
defined as follow \cite{Kilbas}%
\[
\left(  _{t}\mathbb{I}_{b^{+}}^{\tau}\ _{z}\mathbb{I}_{a^{+}}^{\sigma}\right)
d\left(  z,t\right)  =\frac{1}{\Gamma\left(  \tau\right)  \Gamma\left(
\sigma\right)  }\int\limits_{b}^{t}\int\limits_{a}^{z}\left(  z-\xi\right)
^{\sigma-1}\left(  t-y\right)  ^{\tau-1}d\left(  \xi,y\right)  d\xi dy
\]
for $z>a,\ t>b$ and $\operatorname{Re}\left(  \tau\right)
>0,\ \operatorname{Re}\left(  \sigma\right)  >0$.

For a function $d\left(  z,t\right)  $, the double fractional derivative is
defined as follow \cite{Kilbas}%
\[
\left(  \ _{z}D_{a^{+}}^{\sigma}\ _{t}D_{b^{+}}^{\tau}\right)  d\left(
z,t\right)  =\left(  \frac{\partial}{\partial t}\right)  ^{s}\left(
\frac{\partial}{\partial z}\right)  ^{k}\left(  \ _{z}\mathbb{I}_{a^{+}%
}^{k-\sigma}\ _{t}\mathbb{I}_{b^{+}}^{s-\tau}\right)  d\left(  z,t\right)  ,
\]
for $z>a,\ t>b$ and$\ s=\left[  \operatorname{Re}\left(  \tau\right)  \right]
+1,\ k=\left[  \operatorname{Re}\left(  \sigma\right)  \right]  +1$.

\begin{theorem}
For $\operatorname{Re}\left(  \tau\right)
>0$, the modified bivariate I - Konhauser Mittag-Leffler functions have the
following Riemann-Liouville fractional integral representation:%
\begin{align}
&  \left(  \ _{t}\mathbb{I}_{b^{+}}^{\tau}\right)  \left[  \left(  t-b\right)
^{q-1}E_{\gamma
_{4},\gamma_{5},\gamma_{6};p;q;\upsilon}^{\left(  \gamma_{1};\gamma_{2};\gamma_{3}\right)  }\left(  z,w\left(  t-b\right)  \right)
\right]  =\left(  t-b\right)  ^{q+\tau-1}E_{\gamma_{4},\gamma_{5},\gamma_{6};p;q+\tau;\upsilon}^{\left(  \gamma
_{1};\gamma_{2};\gamma_{3}\right)  }\left(
z,w\left(  t-b\right)  \right).\label{modEfracint}
\end{align}

\end{theorem}

\begin{proof}
This can be done in a similar way to the proof of \textit{Theorem 20}.
\end{proof}

\begin{corollary}
The modified I - Konhauser polynomials$\ _{K}I_{k;\upsilon}^{\left(
p,q;\gamma;c\right)  }\left(  z,t\right)  $ have the representation of the
Riemann-Liouville fractional integral as follow%
\begin{align*}
&  _{t}\mathbb{I}_{b^{+}}^{\tau}\left(  \left(  t-b\right)  ^{q}%
\ _{K}I_{k;\upsilon}^{\left(  p,q;\gamma;c\right)  }\left(  z,w\left(
t-b\right)  \right)  \right) =\left(  t-b\right)  ^{q+\tau}\ _{K}I_{k;\upsilon}^{\left(  p,q+\tau
;\gamma;c\right)  }\left(  z,w\left(  t-b\right)  \right)
,\ \operatorname{Re}\left(  \tau\right)  >0.
\end{align*}

\end{corollary}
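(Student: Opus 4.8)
The plan is to obtain this corollary exactly as in the unmodified case (where \lemref{IErel} and (\ref{Efracint}) were combined), now using instead the two facts already in hand for the modified objects: the linking relation (\ref{modIE}) between $_{K}I_{n;\upsilon}^{\left(p,q;\gamma;c\right)}\left(z,t\right)$ and the modified bivariate I - Konhauser Mittag-Leffler functions, together with the Riemann--Liouville fractional-integral formula (\ref{modEfracint}) for those Mittag-Leffler functions. The whole argument is a transfer of (\ref{modEfracint}) through (\ref{modIE}); the fractional operator acts only in the variable $t$, so every factor depending on $z$ is a spectator.

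First I would rewrite the integrand. Substituting $w\left(t-b\right)$ for the second argument in (\ref{modIE}) gives
\[
{}_{K}I_{n;\upsilon}^{\left( p,q;\gamma;c\right)}\left(z,w\left(t-b\right)\right)=\left(1-p\right)_{n}\left(-2z\right)^{n}E_{p+1,q+1,\upsilon}^{\left(-n;-n;c;-n;p-n;\gamma\right)}\left(\frac{-1}{4z^{2}},w\left(t-b\right)\right),
\]
whence
\[
\left(t-b\right)^{q}\,{}_{K}I_{n;\upsilon}^{\left( p,q;\gamma;c\right)}\left(z,w\left(t-b\right)\right)=\left(1-p\right)_{n}\left(-2z\right)^{n}\left(t-b\right)^{\left(q+1\right)-1}E_{p+1,q+1,\upsilon}^{\left(-n;-n;c;-n;p-n;\gamma\right)}\left(\frac{-1}{4z^{2}},w\left(t-b\right)\right).
\]
Since ${}_{t}\mathbb{I}_{b^{+}}^{\tau}$ integrates only in $t$, the scalar $\left(1-p\right)_{n}\left(-2z\right)^{n}$ and the first slot $\frac{-1}{4z^{2}}$ of the Mittag-Leffler function are inert and pull outside the operator.

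Next I would apply (\ref{modEfracint}) with its lower subscript chosen to be $q+1$: the weight $\left(t-b\right)^{\left(q+1\right)-1}=\left(t-b\right)^{q}$ is exactly the prefactor demanded there, and the formula raises that subscript by $\tau$, yielding
\[
{}_{t}\mathbb{I}_{b^{+}}^{\tau}\!\left[\left(t-b\right)^{q}E_{p+1,q+1,\upsilon}^{\left(-n;-n;c;-n;p-n;\gamma\right)}\left(\frac{-1}{4z^{2}},w\left(t-b\right)\right)\right]=\left(t-b\right)^{q+\tau}E_{p+1,q+\tau+1,\upsilon}^{\left(-n;-n;c;-n;p-n;\gamma\right)}\left(\frac{-1}{4z^{2}},w\left(t-b\right)\right).
\]
Finally, reading (\ref{modIE}) backwards with $q$ replaced by $q+\tau$ identifies $\left(1-p\right)_{n}\left(-2z\right)^{n}E_{p+1,q+\tau+1,\upsilon}^{\left(-n;-n;c;-n;p-n;\gamma\right)}\left(\frac{-1}{4z^{2}},w\left(t-b\right)\right)$ with ${}_{K}I_{n;\upsilon}^{\left(p,q+\tau;\gamma;c\right)}\left(z,w\left(t-b\right)\right)$, and reassembling the three displays produces the asserted identity.

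There is no real obstacle; the only care needed is bookkeeping. One must keep the index shift $q\mapsto q+1$ consistent when invoking (\ref{modEfracint}), so that the $\left(t-b\right)^{q}$ weight in the statement matches the $\left(t-b\right)^{\left(q+1\right)-1}$ required by that theorem, and one should note that the hypothesis $\func{Re}\left(\tau\right)>0$ is precisely the range in which (\ref{modEfracint}) was established. Because the superscript parameters $\left(-n;-n;c;-n;p-n;\gamma\right)$ and the $z$-argument are untouched by integration in $t$, no new convergence or interchange issue arises beyond the one already settled in the proof of (\ref{modEfracint}).
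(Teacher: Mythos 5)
Your proposal is correct and takes exactly the same route as the paper's own (one-line) proof, which simply cites the combination of the linking relation (\ref{modIE}) with the fractional-integral formula (\ref{modEfracint}); your careful handling of the index shift $q\mapsto q+1$ so that the weight $\left(t-b\right)^{q}$ matches the $\left(t-b\right)^{(q+1)-1}$ demanded by (\ref{modEfracint}), and of the inert $z$-dependent factors, is precisely the bookkeeping the paper leaves implicit.
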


\begin{proof}
The proof follows from (\ref{modIE}) and (\ref{modEfracint}).
\end{proof}

\begin{theorem}
For $\operatorname{Re}\left(  \sigma\right)  >0$ and$\ \operatorname{Re}%
\left(  \tau\right)  >0$, the double fractional integral representation of the
modified bivariate I - Konhauser Mittag-Leffler functions is as follows%
\begin{align}
&  \left(  \ _{z}\mathbb{I}_{a^{+}}^{\sigma}\ _{t}\mathbb{I}_{b^{+}}^{\tau
}\right)  \left[  \left(  z-a\right)  ^{p-1}\left(  t-b\right)  ^{q-1}%
E_{\gamma_{4}%
,\gamma_{5},\gamma_{6};p;q;\upsilon}^{\left(  \gamma_{1};\gamma_{2};\gamma_{3}\right)  }\left(  w_{1}\left(  z-a\right)  ,w_{2}\left(
t-b\right)  \right)  \right] \label{modEdoublefracint}\\
&  =\left(  z-a\right)  ^{p+\sigma-1}\left(  t-b\right)  ^{q+\tau
-1}E_{\gamma_{4},\gamma_{5},\gamma_{6};p+\sigma;q+\tau;\upsilon}^{\left(  \gamma_{1};\gamma_{2};\gamma
_{3}\right)  }\left(  w_{1}\left(
z-a\right)  ,w_{2}\left(  t-b\right)  \right)  .\nonumber
\end{align}

\end{theorem}

\begin{proof}%
\begin{align*}
&  \left(  \ _{z}\mathbb{I}_{a^{+}}^{\sigma}\ _{t}\mathbb{I}_{b^{+}}^{\tau
}\right)  \left[  \left(  z-a\right)  ^{p-1}\left(  t-b\right)  ^{q-1}%
E_{\gamma_{4},\gamma_{5},\gamma_{6};p;q;\upsilon}^{\left(  \gamma_{1};\gamma_{2};\gamma_{3}\right)  }\left(  w_{1}\left(  z-a\right)  ,w_{2}\left(
t-b\right)  \right)  \right] \\
&  =\frac{1}{\Gamma\left(  \tau\right)  \Gamma\left(  \sigma\right)  }%
\sum\limits_{l=0}^{\infty}\sum\limits_{n=0}^{\infty}\frac{\left(  -1\right)
^{l}\left(  \gamma_{1}\right)  _{2l}\left(  \gamma_{2}\right)  _{l+n}\left(
\gamma_{3}\right)  _{l}w_{1}^{l}w_{2}^{\upsilon n}}{\left(  \gamma_{4}\right)
_{l}\left(  \gamma_{5}\right)  _{l}\left(  \gamma_{6}\right)  _{l}%
\Gamma\left(  p+l\right)  \Gamma\left(  q+\upsilon n\right)  l!n!}\\
&  \times\int\limits_{a}^{z}\int\limits_{b}^{t}\left(  z-\xi\right)
^{\sigma-1}\left(  \xi-a\right)  ^{p+l-1}\left(  t-y\right)  ^{\tau-1}\left(
y-b\right)  ^{q+\upsilon n-1}dyd\xi\\
&  =\left(  z-a\right)  ^{p+\sigma-1}\left(  t-b\right)  ^{q+\tau-1}%
\sum\limits_{l=0}^{\infty}\sum\limits_{n=0}^{\infty}\frac{\left(  \gamma
_{1}\right)  _{2l}\left(  \gamma_{2}\right)  _{l+n}\left(  \gamma_{3}\right)
_{l}\left(  -w_{1}\left(  z-a\right)  \right)  ^{l}\left(  w_{2}\left(
t-b\right)  \right)  ^{\upsilon n}}{\left(  \gamma_{4}\right)  _{l}\left(
\gamma_{5}\right)  _{l}\left(  \gamma_{6}\right)  _{l}\Gamma\left(
p+\sigma+l\right)  \Gamma\left(  q+\tau+\upsilon n\right)  l!n!}\\
&  =\left(  z-a\right)  ^{p+\sigma-1}\left(  t-b\right)  ^{q+\tau
-1}E_{\gamma_{4},\gamma_{5},\gamma_{6};p+\sigma;q+\tau;\upsilon}^{\left(  \gamma_{1};\gamma_{2};\gamma
_{3}\right)  }\left(  w_{1}\left(
z-a\right)  ,w_{2}\left(  t-b\right)  \right)  .
\end{align*}

\end{proof}

\begin{corollary}
For $\operatorname{Re}\left(  \sigma\right)  >0$ and$\ \operatorname{Re}%
\left(  \tau\right)  >0$, the double fractional integral representation of the
polynomials$\ _{K}I_{k;\upsilon}^{\left(  p,q;\gamma;c\right)  }\left(
z,t\right)  $ is as follows
\begin{align*}
&  \left(  \ _{z}\mathbb{I}_{a^{+}}^{\sigma}\ _{t}\mathbb{I}_{b^{+}}^{\tau
}\right)  \left[  \left(  z-a\right)  ^{p}\left(  t-b\right)  ^{q}\left(
\frac{-i}{\sqrt{w_{1}\left(  z-a\right)  }}\right)  ^{-k}\ _{K}I_{k;\upsilon
}^{\left(  p,q;\gamma;c\right)  }\left(  \frac{i}{2\sqrt{w_{1}\left(
z-a\right)  }},w_{2}\left(  t-b\right)  \right)  \right] \\
&  =\left(  z-a\right)  ^{p+\sigma}\left(  t-b\right)  ^{q+\tau}\left(
\frac{-i}{\sqrt{w_{1}\left(  z-a\right)  }}\right)  ^{-k}\ _{K}I_{k;\upsilon
}^{\left(  p+\sigma,q+\tau;\gamma;c\right)  }\left(  \frac{i}{2\sqrt
{w_{1}\left(  z-a\right)  }},w_{2}\left(  t-b\right)  \right)  .
\end{align*}

\end{corollary}

\begin{proof}
(\ref{modEdoublefracint}) and (\ref{modIE}) are used.
\end{proof}

\begin{theorem}
For $\operatorname{Re}\left(  \tau\right)  \geq0$ and $\operatorname{Re}\left(  q-\tau\right)  \geq0$, the modified I - Konhauser
Mittag-Leffler functions have the Riemann-Liouville partial fractional
derivative representation:%
\begin{equation}
\ _{t}D_{b^{+}}^{\tau}\left[  \left(  t-b\right)  ^{q-1}E_{\gamma_{4},\gamma_{5},\gamma
_{6};p;q;\upsilon
}^{\left(  \gamma_{1};\gamma_{2};\gamma_{3}\right)  }\left(  z,w\left(  t-b\right)  \right)  \right]  =\left(
t-b\right)  ^{q-\tau-1}E_{\gamma_{4},\gamma_{5},\gamma_{6};p;q-\tau;\upsilon}^{\left(  \gamma_{1};\gamma
_{2};\gamma_{3}\right)  }\left(  z,w\left(
t-b\right)  \right)  . \label{modEfracder}%
\end{equation}

\end{theorem}

\begin{proof}
It is in a similar vein to the proof of (\ref{Efracder}).
\end{proof}

\begin{corollary}
For $\operatorname{Re}\left(  \tau\right)  \geq0$ and $\operatorname{Re}\left(  q-\tau\right)  \geq0$, the modified I - Konhauser polynomials have the Riemann-Liouville fractional
derivative representation as follow%
\[
\ _{t}D_{b^{+}}^{\tau}\left[  \left(  t-b\right)  ^{q}\ _{K}I_{k;\upsilon
}^{\left(  p,q;\gamma;c\right)  }\left(  z,w\left(  t-b\right)  \right)
\right]  =\left(  t-b\right)  ^{q-\tau}\ _{K}I_{k;\upsilon}^{\left(
p,q-\tau;\gamma;c\right)  }\left(  z,w\left(  t-b\right)  \right).
\]

\end{corollary}

\begin{proof}
It is proved by using relationship (\ref{modIE}) in (\ref{modEfracder}).\
\end{proof}

\begin{theorem}
For $\ \operatorname{Re}%
\left(  \sigma\right)  \geq0$, $\ \operatorname{Re}%
\left(  p-\sigma\right)  \geq0$ and $\operatorname{Re}\left(  \tau\right)  \geq0$, $\operatorname{Re}\left(  q-\tau\right)  \geq0$, the modified I - Konhauser Mittag-Leffler
functions have the following double partial fractional derivative
representation:%
\begin{align}
&  \left(  \ _{z}D_{a^{+}}^{\sigma}\ _{t}D_{b^{+}}^{\tau}\right)  \left[
\left(  z-a\right)  ^{p-1}\left(  t-b\right)  ^{q-1}E_{\gamma_{4},\gamma_{5},\gamma_{6};p;q;\upsilon}^{\left(
\gamma_{1};\gamma_{2};\gamma_{3}\right)
}\left(  w_{1}\left(  z-a\right)  ,w_{2}\left(  t-b\right)  \right)  \right]
\label{modEdoublefracder}\\
&  =\left(  z-a\right)  ^{p-\sigma-1}\left(  t-b\right)  ^{q-\tau
-1}E_{\gamma_{4},\gamma_{5},\gamma_{6};p-\sigma;q-\tau;\upsilon}^{\left(  \gamma_{1};\gamma_{2};\gamma
_{3}\right)  }\left(  w_{1}\left(
z-a\right)  ,w_{2}\left(  t-b\right)  \right)  .\nonumber
\end{align}

\end{theorem}

\begin{proof} For $\ \operatorname{Re}%
\left(  \sigma\right)  \geq0$, $\ \operatorname{Re}%
\left(  p-\sigma\right)  \geq0$ and $\operatorname{Re}\left(  \tau\right)  \geq0$, $\operatorname{Re}\left(  q-\tau\right)  \geq0$,
\begin{align*}
&  \left(  \ _{z}D_{a^{+}}^{\sigma}\ _{t}D_{b^{+}}^{\tau}\right)  \left[
\left(  z-a\right)  ^{p-1}\left(  t-b\right)  ^{q-1}E_{\gamma_{4},\gamma_{5},\gamma_{6};p;q;\upsilon}^{\left(
\gamma_{1};\gamma_{2};\gamma_{3}\right)
}\left(  w_{1}\left(  z-a\right)  ,w_{2}\left(  t-b\right)  \right)  \right]
\\
&  =\frac{1}{\Gamma\left(  k-\sigma\right)  }\frac{1}{\Gamma\left(
s-\tau\right)  }\sum\limits_{l=0}^{\infty}\sum\limits_{n=0}^{\infty}%
\frac{\left(  -1\right)  ^{l}\left(  \gamma_{1}\right)  _{2l}\left(
\gamma_{2}\right)  _{l+n}\left(  \gamma_{3}\right)  _{l}w_{1}^{l}%
w_{2}^{\upsilon n}}{\left(  \gamma_{4}\right)  _{l}\left(  \gamma_{5}\right)
_{l}\left(  \gamma_{6}\right)  _{l}\Gamma\left(  p+l\right)  \Gamma\left(
q+\upsilon n\right)  l!n!}\\
&  \times D_{z}^{k}\int\limits_{a}^{z}\left(  z-\xi\right)  ^{k-\sigma
-1}\left(  \xi-a\right)  ^{p+l-1}d\xi\ D_{t}^{s}\int\limits_{b}^{t}\left(
t-y\right)  ^{s-\tau-1}\left(  y-b\right)  ^{q+\upsilon n-1}dy\\
&  =\left(  z-a\right)  ^{p-\sigma-1}\left(  t-b\right)  ^{q-\tau-1}%
\sum\limits_{l=0}^{\infty}\sum\limits_{n=0}^{\infty}\frac{\left(  \gamma
_{1}\right)  _{2l}\left(  \gamma_{2}\right)  _{l+n}\left(  \gamma_{3}\right)
_{l}\left(  -w_{1}\left(  z-a\right)  \right)  ^{l}\left(  w_{2}\left(
t-b\right)  \right)  ^{\upsilon n}}{\left(  \gamma_{4}\right)  _{l}\left(
\gamma_{5}\right)  _{l}\left(  \gamma_{6}\right)  _{l}\Gamma\left(
p-\sigma+l\right)  \Gamma\left(  q-\tau+\upsilon n\right)  l!n!}\\
&  =\left(  z-a\right)  ^{p-\sigma-1}\left(  t-b\right)  ^{q-\tau
-1}E_{\gamma_{4},\gamma_{5},\gamma_{6};p-\sigma;q-\tau;\upsilon}^{\left(  \gamma_{1};\gamma_{2};\gamma
_{3}\right)  }\left(  w_{1}\left(
z-a\right)  ,w_{2}\left(  t-b\right)  \right)  .
\end{align*}

\end{proof}

\begin{corollary}
For $\operatorname{Re}\left(  \sigma\right)  \geq0$ and $\operatorname{Re}%
\left(  \tau\right)  \geq0$, the following double partial fractional
derivative representation holds true for the I - Konhauser polynomials:%
\begin{align*}
&  \left(  \ _{z}D_{a^{+}}^{\sigma}\ _{t}D_{b^{+}}^{\tau}\right)  \left[
\left(  t-b\right)  ^{q}\left(  z-a\right)  ^{p}\left(  \frac{-i}{\sqrt
{w_{1}\left(  z-a\right)  }}\right)  ^{-k}\ _{K}I_{k;\upsilon}^{\left(
p,q;\gamma;c\right)  }\left(  \frac{i}{2\sqrt{w_{1}\left(  z-a\right)  }%
},w_{2}\left(  t-b\right)  \right)  \right] \\
&  =\left(  z-a\right)  ^{p-\sigma}\left(  t-b\right)  ^{q-\tau}\left(
\frac{-i}{\sqrt{w_{1}\left(  z-a\right)  }}\right)  ^{-k}\ _{K}I_{k;\upsilon
}^{\left(  p-\sigma,q-\tau;\gamma;c\right)  }\left(  \frac{i}{2\sqrt
{w_{1}\left(  z-a\right)  }},w_{2}\left(  t-b\right)  \right)  .
\end{align*}

\end{corollary}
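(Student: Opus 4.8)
The plan is to obtain this as the two-variable companion of the single-variable derivative corollary, sandwiching the operator identity (\ref{modEdoublefracder}) between two applications of the relation (\ref{modIE}). First I would use (\ref{modIE}) to trade the modified polynomial inside the bracket for the modified Mittag-Leffler function. Writing $Z=\dfrac{i}{2\sqrt{w_{1}\left( z-a\right) }}$, the definitional argument $\dfrac{-1}{4Z^{2}}$ collapses to $w_{1}\left( z-a\right) $, while the prefactor $\left( -2Z\right) ^{n}=\left( \dfrac{-i}{\sqrt{w_{1}\left( z-a\right) }}\right) ^{n}$ is exactly the reciprocal of the external factor $\left( \dfrac{-i}{\sqrt{w_{1}\left( z-a\right) }}\right) ^{-n}$ carried in the statement. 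With the second slot set to $w_{2}\left( t-b\right) $, the bracketed quantity therefore reduces (up to the constant $\left( 1-p\right) _{n}$) to
\begin{equation*}
\left( z-a\right) ^{p}\left( t-b\right) ^{q}E_{p+1,q+1,\upsilon }^{\left( -n;-n;c;-n;p-n;\gamma\right) }\left( w_{1}\left( z-a\right) ,w_{2}\left( t-b\right) \right) ,
\end{equation*}
which is precisely the integrand appearing in (\ref{modEdoublefracder}) with the index pair $\left( p,q\right) $ replaced by $\left( p+1,q+1\right) $.

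Next I would apply the operator identity (\ref{modEdoublefracder}) verbatim with those shifted subscripts. Since $\left( z-a\right) ^{p}=\left( z-a\right) ^{\left( p+1\right) -1}$ and $\left( t-b\right) ^{q}=\left( t-b\right) ^{\left( q+1\right) -1}$, the theorem lowers the subscripts of $E$ to $p+1-\sigma$ and $q+1-\tau$ and replaces the two powers by $\left( z-a\right) ^{p-\sigma}$ and $\left( t-b\right) ^{q-\tau}$, leaving every superscript untouched. Reading (\ref{modIE}) backwards then folds the resulting Mittag-Leffler function $E_{p-\sigma+1,q-\tau+1,\upsilon}^{\left( \cdots\right) }$ back into the polynomial $_{K}I_{n;\upsilon}^{\left( p-\sigma,q-\tau;\gamma;c\right) }$ evaluated again at $Z$, reinstating the external factor $\left( \dfrac{-i}{\sqrt{w_{1}\left( z-a\right) }}\right) ^{-n}$; this yields the claimed right-hand side. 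Because the separable kernel $\left( z-\xi\right) ^{\sigma-1}\left( t-y\right) ^{\tau-1}$ makes the double Riemann--Liouville operator factor into the product of its one-variable constituents, the whole argument is just the $t$-only derivative corollary run once in each variable, so no new analytic input beyond (\ref{modEdoublefracder}) is needed.

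The delicate point, and the step I would check most carefully, is the bookkeeping of parameters when (\ref{modIE}) is inverted. The relation (\ref{modEdoublefracder}) transports the superscripts $\left( -n;-n;c;-n;p-n;\gamma\right) $ unchanged, so to identify the output with $_{K}I_{n;\upsilon}^{\left( p-\sigma,q-\tau;\gamma;c\right) }$ one must treat the entry $p-n$ and the scalar $\left( 1-p\right) _{n}$ as frozen formal arguments of $E$ rather than re-deriving them from the shifted first subscript $p-\sigma$. Keeping these superscripts inert throughout---exactly as the fractional-derivative theorem dictates---is what makes the substitution $\frac{-1}{4Z^{2}}=w_{1}\left( z-a\right) $ and the index shifts line up, and it is the only place where a careless re-expansion of the Pochhammer prefactors would break the identity.
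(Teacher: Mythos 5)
Your proposal is correct and is essentially identical to the paper's own proof, which consists of the single line ``Making use of (\ref{modIE}) we apply (\ref{modEdoublefracder})'' --- i.e., the same sandwich you describe: convert the polynomial to the Mittag--Leffler function via (\ref{modIE}) with $Z=\frac{i}{2\sqrt{w_{1}\left( z-a\right) }}$ so that $\frac{-1}{4Z^{2}}=w_{1}\left( z-a\right) $, apply the double fractional derivative theorem with subscripts $\left( p+1,q+1\right) $, and fold the result back into a polynomial. Your closing caveat is in fact a point where you are more careful than the paper: since (\ref{modEdoublefracder}) transports the superscripts $\left( -n;-n;c;-n;p-n;\gamma\right) $ and the scalar $\left( 1-p\right) _{n}$ unchanged while the literal definition of $_{K}I_{n;\upsilon}^{\left( p-\sigma,q-\tau;\gamma;c\right) }$ would require $\left( 1-p+\sigma\right) _{n}$ and $\left( p-\sigma-n\right) _{l}$, the final identification only holds under the ``frozen superscripts'' reading you describe --- a sloppiness residing in the paper's statement of the corollary, which its one-line proof silently glosses over.
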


\begin{proof}
To prove the desired, the relationship (\ref{modIE}) is used in (\ref{modEdoublefracder}).
\end{proof}
\bigskip
\subsection{Convolution type integral equation and integral operator}

We know that the double fractional integral $\left(  \ _{z}\mathbb{I}_{0^{+}%
}^{\sigma}\ _{t}\mathbb{I}_{0^{+}}^{\tau}d\right)  \left(  z,t\right)  $ may
be expressed of convolution form%
\[
\left(  \ _{z}\mathbb{I}_{0^{+}}^{\sigma}\ _{t}\mathbb{I}_{0^{+}}^{\tau
}d\right)  \left(  z,t\right)  =\left[  d\left(  z,t\right)  \ast\frac{z_{\xi
}^{\sigma-1}t_{y}^{\tau-1}}{\Gamma\left(  \sigma\right)  \Gamma\left(
\tau\right)  }\right]
\]
for $\operatorname{Re}\left(  \sigma\right)  >0$ and $\operatorname{Re}\left(
\tau\right)  >0$. Thus, we have%
\begin{equation}
\mathbb{L}_{2}\left(  \ _{z}\mathbb{I}_{0^{+}}^{\sigma}\ _{t}\mathbb{I}%
_{0^{+}}^{\tau}d\right)  \left(  z,t\right)  =z^{-\sigma}t^{-\tau}%
\mathbb{L}_{2}\left(  d\right)  \left(  z,t\right)  . \label{LI}%
\end{equation}

Let us consider the double convolution equation%
\begin{align}
\psi\left(  z,t\right)   &  =w_{1}^{k/2}\int\limits_{0}^{z}\int\limits_{0}%
^{t}\left(  z-\xi\right)  ^{p+\frac{k}{2}}\left(  t-y\right)  ^{q}d\left(
\xi,y\right) \ _{K}I_{k;\upsilon}^{\left(  p,q;\frac{-k+1}{2};p-k\right)  }\left(
\frac{1}{2\sqrt{w_{1}\left(  z-\xi\right)  }},w_{2}\left(  t-y\right)
\right)  dyd\xi. \label{IE}
\end{align}
We can give the following theorem for the solution of the integral equation
(\ref{IE}):

\begin{theorem}
The singular double integral (\ref{IE}) has the solution%
\begin{align}
d\left(  z,t\right)   &  =\frac{\left(  -1\right)  ^{k}}{\left(  1-p\right)
_{k}}\int\limits_{0}^{z}\int\limits_{0}^{t}\left(  z-\xi\right)  ^{\sigma
-p-2}\left(  t-y\right)  ^{\tau-q-2}E_{\frac{k+1}{2},k,\gamma_{3};\sigma-p-1;\tau-q-1;\upsilon}^{\left(
k;k;\gamma_{3}\right)  }\left(  w_{1}\left(
z-\xi\right)  ,w_{2}\left(  t-y\right)  \right) \label{DIE}\\
&  \times\ _{z}\mathbb{I}_{0^{+}}^{-\sigma}\ _{t}\mathbb{I}_{0^{+}}^{-\tau
}\psi\left(  \xi,y\right)  dyd\xi\nonumber
\end{align}
if$\ _{z}\mathbb{I}_{0^{+}}^{-\sigma}\ _{t}\mathbb{I}_{0^{+}}^{-\tau}\psi$
exists for $\operatorname{Re}\left(  \sigma-p-1\right)  >0  $ and $\operatorname{Re}\left(  \tau-q-1\right) >0 $ and is integrable for $0<z,t<\infty$.
\end{theorem}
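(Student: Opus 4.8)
The plan is to read (\ref{IE}) as a double convolution and to solve it by means of the bivariate Laplace transform $\mathbb{L}_{2}$, under which convolution becomes an ordinary product. Setting
\[
\Phi\left( z,t\right) =w_{1}^{n/2}z^{p+\frac{n}{2}}t^{q}\ _{K}I_{n;\upsilon}^{\left( p,q;\frac{-n+1}{2};p-n\right) }\left( \frac{1}{2\sqrt{w_{1}z}},w_{2}t\right) ,
\]
equation (\ref{IE}) is exactly $\psi =\Phi \ast d$. First I would invoke the bivariate Laplace transform of $\Phi$ supplied by the Corollary that computes $\mathbb{L}_{2}$ of $\ _{K}I_{n;\upsilon}^{\left( p,q;\frac{-n+1}{2};p-n\right) }$, namely
\[
\mathbb{L}_{2}\left( \Phi\right) \left( a,b\right) =\frac{\left( 1-p\right) _{n}}{a^{p+1}b^{q+1}}\left( \frac{w_{2}^{\upsilon}-b^{\upsilon}}{b^{\upsilon}}\right) ^{n}\left( 1-\frac{4w_{1}b^{\upsilon}}{a\left( w_{2}^{\upsilon}-b^{\upsilon}\right) }\right) ^{n/2},
\]
valid on $\left\vert \frac{w_{2}^{\upsilon}}{b^{\upsilon}}\right\vert <1$ and $\left\vert \frac{4w_{1}b^{\upsilon}}{a\left( w_{2}^{\upsilon}-b^{\upsilon}\right) }\right\vert <1$. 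Applying $\mathbb{L}_{2}$ to (\ref{IE}) and using the double convolution theorem gives $\mathbb{L}_{2}\left( \psi\right) =\mathbb{L}_{2}\left( \Phi\right) \mathbb{L}_{2}\left( d\right) $, so that
\[
\mathbb{L}_{2}\left( d\right) =\frac{a^{p+1}b^{q+1}}{\left( 1-p\right) _{n}}\left( \frac{w_{2}^{\upsilon}-b^{\upsilon}}{b^{\upsilon}}\right) ^{-n}\left( 1-\frac{4w_{1}b^{\upsilon}}{a\left( w_{2}^{\upsilon}-b^{\upsilon}\right) }\right) ^{-n/2}\mathbb{L}_{2}\left( \psi\right) .
\]

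Next I would recognise this right-hand side as the transform of the double convolution in (\ref{DIE}). Introducing the free parameters $\sigma,\tau$ through the splitting $a^{p+1}b^{q+1}=a^{\sigma}b^{\tau}\cdot a^{p+1-\sigma}b^{q+1-\tau}$, the factor $a^{\sigma}b^{\tau}\mathbb{L}_{2}\left( \psi\right) $ is, by (\ref{LI}) used at the negative orders $-\sigma,-\tau$, precisely $\mathbb{L}_{2}\left( \ _{z}\mathbb{I}_{0^{+}}^{-\sigma}\ _{t}\mathbb{I}_{0^{+}}^{-\tau}\psi\right) $. For the remaining factor I would take the kernel $G\left( z,t\right) =\frac{\left( -1\right) ^{n}}{\left( 1-p\right) _{n}}z^{\sigma-p-2}t^{\tau-q-2}E_{\sigma-p-1,\tau-q-1,\upsilon}^{\left( n;n;\gamma_{3};\frac{n+1}{2};n;\gamma_{3}\right) }\left( w_{1}z,w_{2}t\right) $ of (\ref{DIE}) and apply the Theorem giving the bivariate Laplace transform of $E_{p,q,\upsilon}^{\left( \gamma_{1};\gamma_{2};\gamma_{3};\frac{\gamma_{1}+1}{2};\gamma_{2};\gamma_{3}\right) }$ with $\gamma_{1}=\gamma_{2}=n$ and indices $\sigma-p-2,\tau-q-2$; the free parameter $\gamma_{3}$ cancels between the third and sixth superscripts, and one gets
\[
\mathbb{L}_{2}\left( G\right) =\frac{\left( -1\right) ^{n}}{\left( 1-p\right) _{n}}a^{p+1-\sigma}b^{q+1-\tau}\left( \frac{b^{\upsilon}-w_{2}^{\upsilon}}{b^{\upsilon}}\right) ^{-n}\left( 1-\frac{4w_{1}b^{\upsilon}}{a\left( w_{2}^{\upsilon}-b^{\upsilon}\right) }\right) ^{-n/2}.
\]
Using $\left( \frac{w_{2}^{\upsilon}-b^{\upsilon}}{b^{\upsilon}}\right) ^{-n}=\left( -1\right) ^{n}\left( \frac{b^{\upsilon}-w_{2}^{\upsilon}}{b^{\upsilon}}\right) ^{-n}$ to absorb the sign, the product $\mathbb{L}_{2}\left( G\right) \mathbb{L}_{2}\left( \ _{z}\mathbb{I}_{0^{+}}^{-\sigma}\ _{t}\mathbb{I}_{0^{+}}^{-\tau}\psi\right) $ coincides exactly with the expression for $\mathbb{L}_{2}\left( d\right) $ found above. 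Reading the double convolution theorem backwards, this is the transform of the iterated integral on the right of (\ref{DIE}), so uniqueness of the inverse bivariate Laplace transform yields (\ref{DIE}).

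The hard part will be the analytic justification rather than the algebra, which is essentially bookkeeping of the two closed transforms already recorded in the paper. I would need $\Phi$ and $d$ to be locally integrable with transforms existing on a common strip so that the double convolution theorem applies; I would have to license (\ref{LI}) at the negative orders $-\sigma,-\tau$, and this is exactly where the hypotheses that $\ _{z}\mathbb{I}_{0^{+}}^{-\sigma}\ _{t}\mathbb{I}_{0^{+}}^{-\tau}\psi$ exist and be locally integrable, together with $\func{Re}\left( \sigma\right) >\func{Re}\left( p\right) $ and $\func{Re}\left( \tau\right) >\func{Re}\left( q\right) $, enter: they make the exponents $\sigma-p-2,\tau-q-2$ and the Mittag-Leffler indices $\sigma-p-1,\tau-q-1$ admissible and suppress the boundary terms produced by the derivative transform. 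Finally I would verify that all three transforms share the convergence region $\left\vert \frac{w_{2}^{\upsilon}}{b^{\upsilon}}\right\vert <1$, $\left\vert \frac{4w_{1}b^{\upsilon}}{a\left( w_{2}^{\upsilon}-b^{\upsilon}\right) }\right\vert <1$, so that the concluding inversion is legitimate.
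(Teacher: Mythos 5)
Your proposal is correct and follows essentially the same route as the paper's own proof: apply the bivariate Laplace transform to (\ref{IE}), use the double convolution theorem together with the transform of the modified polynomials (the paper's \textit{Corollary 38}) to solve for $\mathbb{L}_{2}\left( d\right) $, split off $a^{\sigma }b^{\tau }\mathbb{L}_{2}\left( \psi \right) $ via (\ref{LI}), recognize the remaining factor as the transform of the Mittag-Leffler kernel (the paper's \textit{Theorem 37}), and invert. Your write-up is in fact more explicit than the paper's: you spell out the identification of the kernel transform, the $\left( -1\right) ^{n}$ sign bookkeeping, and the convergence regions, whereas the paper leaves these implicit (and its display (\ref{L2}) even carries an apparent sign typo, $n$ and $\frac{n}{2}$ where $-n$ and $-\frac{n}{2}$ are meant).
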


\begin{proof}
Applying the double Laplace transform on both sides of (\ref{IE}) and using
convolution theorem and \textit{Corollary 45,} we have%
\[
\frac{\left(  1-p\right)  _{k}}{a^{p+1}b^{q+1}}\left(  \frac{w_{2}^{\upsilon}%
}{q^{\upsilon}}-1\right)  ^{k}\left(  1-\frac{4q^{\upsilon}w_{1}}{a\left(
w_{2}^{\upsilon}-q^{\upsilon}\right)  }\right)  ^{\frac{k}{2}}\mathbb{L}%
_{2}\left[  d\left(  \xi,y\right)  \right]  \left(  a,b\right)  =\mathbb{L}%
_{2}\left[  \psi\left(  \xi,y\right)  \right]  \left(  a,b\right)  ,
\]
and then%
\begin{equation}
\mathbb{L}_{2}\left[  d\left(  \xi,y\right)  \right]  =\frac{a^{p-\sigma
+1}b^{q-\tau+1}}{\left(  1-p\right)  _{k}}\left(  \frac{w_{2}^{\upsilon}%
}{q^{\upsilon}}-1\right)  ^{-k}\left(  1-\frac{4q^{\upsilon}w_{1}}{a\left(
w_{2}^{\upsilon}-q^{\upsilon}\right)  }\right)  ^{-\frac{k}{2}}a^{\sigma
}b^{\tau}\mathbb{L}_{2}\left[  \psi\left(  \xi,y\right)  \right]  \left(
a,b\right)  . \label{L2}%
\end{equation}
Taking inverse double Laplace transform of (\ref{L2}) and taking into account
(\ref{LI}) and \textit{Theorem 44}, we get (\ref{DIE}).
\end{proof}

We consider the following double fractional integral operator:%
\begin{align}
\left(  \mathfrak{E}_{\gamma_{4}, \gamma_{5},\gamma_{6};p;q;\upsilon;w_{1},w_{2};a_{1}^{+},b_{1}^{+}}^{\left(
\gamma_{1};\gamma_{2};\gamma_{3}\right)
}d\right)  \left(  z,t\right)  =\int\limits_{b_{1}}^{t}\int\limits_{a_{1}}^{z}\left(
t-y\right)  ^{q-1}\left(  z-\xi\right)  ^{p-1}%
\ \ \ \ \ \ \ \ \ \ \ \ \ \ \ \ \ \ \ \ \  & \label{IO}\\
\times E_{\gamma
_{4},\gamma_{5},\gamma_{6};p;q;\upsilon}^{\left(  \gamma_{1};\gamma_{2};\gamma_{3}\right)  }\left(  w_{1}\left(  z-\xi\right)
,w_{2}\left(  t-y\right)  \right)  d\left(  \xi,y\right)  d\xi dy.  &
\nonumber
\end{align}
For $\gamma_{4}=\frac{\gamma_{1}+1}{2},\gamma_{5}=\gamma_{2},\gamma_{6}=\gamma_{3}$, taking $\gamma_{1}=\gamma_{2}=\gamma_{3}=0$, the
integral operator $\mathfrak{E}_{\gamma_{4},\gamma_{5},\gamma
_{6};p;q;\upsilon;w_{1},w_{2};a_{1}^{+},b_{1}^{+}%
}^{\left(  \gamma_{1};\gamma_{2};\gamma_{3}\right)  }$ reduces to the Riemann-Liouville double fractional integral
operator defined in%
\[
\ _{z}\mathbb{I}_{a_{1}^{+}}^{\sigma}\ _{t}\mathbb{I}_{b_{1}^{+}}^{\tau
}d\left(  z,t\right)  =\frac{1}{\Gamma\left(  \sigma\right)  \Gamma\left(
\tau\right)  }\int\limits_{b_{1}}^{t}\int\limits_{a_{1}}^{z}\left(
z-\xi\right)  ^{\sigma-1}\left(  t-y\right)  ^{\tau-1}d\left(  \xi,y\right)
d\xi dy,
\]
where $z>a_{1},t>b_{1}$ and$\ \operatorname{Re}\left(  \sigma\right)
>0,\operatorname{Re}\left(  \tau\right)  >0$, i.e.%
\begin{equation}
\left(  \mathfrak{E}_{\frac{1}{2},0,0;p;q;\upsilon;w_{1},w_{2};a_{1}^{+},b_{1}^{+}}^{\left(
0;0;0\right)  }d\right)  \left(  z,t\right)  =\left(  \ _{t}%
\mathbb{I}_{b_{1}^{+}}^{q}\ _{z}\mathbb{I}_{a_{1}^{+}}^{p}d\right)  \left(
z,t\right)  . \label{op0}%
\end{equation}
Now, we give the transformation properties of $\mathfrak{E}_{\gamma_{4},\gamma_{5},\gamma_{6};p;q;\upsilon
;w_{1},w_{2};a_{1}^{+},b_{1}^{+}}^{\left(  \gamma_{1};\gamma_{2};\gamma
_{3}\right)  }$ in the space $L\left(
\left(  a_{1},a_{2}\right)  \times\left(  b_{1},b_{2}\right)  \right)  $ of
Lebesgue measurable functions where, as usual%
\[
L\left(  \left(  a_{1},a_{2}\right)  \times\left(  b_{1},b_{2}\right)
\right)  =\left\{  d:\left\Vert d\right\Vert _{1}:=\int\limits_{a_{1}}^{a_{2}%
}\int\limits_{b_{1}}^{b_{2}}\left\vert d\left(  z,t\right)  \right\vert
dtdz<\infty\right\}  .
\]

\begin{theorem}
The double integral operator $\mathfrak{E}_{\gamma_{4},\gamma_{5},\gamma_{6};p;q;\upsilon;w_{1},w_{2};a_{1}%
^{+},b_{1}^{+}}^{\left(  \gamma_{1};\gamma_{2};\gamma_{3}\right)  }$ is bounded in the space $L\left(  \left(
a_{1},a_{2}\right)  \times\left(  b_{1},b_{2}\right)  \right)  $, i.e.%
\[
\left\Vert \mathfrak{E}_{\gamma_{4},\gamma_{5},\gamma_{6};p;q;\upsilon;w_{1},w_{2};a_{1}^{+},b_{1}^{+}%
}^{\left(  \gamma_{1};\gamma_{2};\gamma_{3}\right)  }d\right\Vert _{1}\leq\mathcal{M}\left\Vert d\right\Vert _{1},
\]
where the constant $\mathcal{M}$ $\left(  0<\mathcal{M}<\infty\right)  $ is as
follows%
\begin{align}
\mathcal{M}=\left(  a_{2}-b_{1}\right)  ^{\operatorname{Re}\left(  p\right)
}\left(  b_{2}-a_{1}\right)  ^{\operatorname{Re}\left(  q\right)  }%
\sum\limits_{n=0}^{\infty}\sum\limits_{l=0}^{\infty}\frac{\left\vert \left(
\gamma_{1}\right)  _{2l}\right\vert \left\vert \left(  \gamma_{2}\right)
_{l+n}\right\vert \left\vert \left(  \gamma_{3}\right)  _{l}\right\vert
}{\left\vert \left(  \gamma_{4}\right)  _{l}\right\vert \left\vert \left(
\gamma_{5}\right)  _{l}\right\vert \left\vert \left(  \gamma_{6}\right)
_{l}\right\vert }  & \label{con}\\
\times\frac{\left\vert w_{1}\left(  a_{2}-b_{1}\right)  \right\vert
^{l}\left\vert w_{2}\left(  b_{2}-a_{1}\right)  \right\vert ^{\upsilon n}%
}{\left\vert \Gamma\left(  p+l\right)  \right\vert \left\vert \Gamma\left(
q+\upsilon n\right)  \right\vert \left(  \operatorname{Re}\left(  p\right)
+l\right)  \left(  \operatorname{Re}\left(  q\right)  +\upsilon n\right)
l!n!}<\infty.  & \nonumber
\end{align}

\end{theorem}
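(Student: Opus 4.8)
The plan is to bound the $L_{1}$-norm directly from the definition (\ref{IO}). First I would write
\begin{equation*}
\left\Vert \mathfrak{E}_{p,q,\upsilon;w_{1},w_{2};a_{1}^{+},a_{2}^{+}}^{\left( \gamma_{1};\gamma_{2};\gamma_{3};\gamma_{4};\gamma_{5};\gamma_{6}\right) }d\right\Vert _{1}=\int\limits_{a_{1}}^{b_{1}}\int\limits_{a_{2}}^{b_{2}}\left\vert \left( \mathfrak{E}_{p,q,\upsilon;w_{1},w_{2};a_{1}^{+},a_{2}^{+}}^{\left( \gamma_{1};\gamma_{2};\gamma_{3};\gamma_{4};\gamma_{5};\gamma_{6}\right) }d\right) \left( z,t\right) \right\vert dtdz,
\end{equation*}
and substitute the series (\ref{ModEdef}) for the kernel $E_{p,q,\upsilon}^{\left( \gamma_{1};\gamma_{2};\gamma_{3};\gamma_{4};\gamma_{5};\gamma_{6}\right) }\left( w_{1}\left( z-\xi\right) ,w_{2}\left( t-y\right) \right) $ appearing in (\ref{IO}). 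Applying the triangle inequality to both the inner integral over $\left( \xi,y\right) $ and the double series then yields a nonnegative majorant in which each term carries the power factors $\left( z-\xi\right) ^{\func{Re}\left( p\right) +l-1}$ and $\left( t-y\right) ^{\func{Re}\left( q\right) +\upsilon m-1}$ together with the factor $\left\vert d\left( \xi,y\right) \right\vert $.

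Next I would interchange the order of integration by Fubini--Tonelli, which is legitimate because every term of the majorant is nonnegative, so that the outer integrations in $z$ and $t$ act only on the power kernels. The elementary computations
\begin{equation*}
\int\limits_{\xi}^{b_{1}}\left( z-\xi\right) ^{\func{Re}\left( p\right) +l-1}dz=\frac{\left( b_{1}-\xi\right) ^{\func{Re}\left( p\right) +l}}{\func{Re}\left( p\right) +l},\qquad \int\limits_{y}^{b_{2}}\left( t-y\right) ^{\func{Re}\left( q\right) +\upsilon m-1}dt=\frac{\left( b_{2}-y\right) ^{\func{Re}\left( q\right) +\upsilon m}}{\func{Re}\left( q\right) +\upsilon m}
\end{equation*}
produce precisely the denominators $\left( \func{Re}\left( p\right) +l\right) $ and $\left( \func{Re}\left( q\right) +\upsilon m\right) $ of (\ref{con}). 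Bounding the remaining differences by the lengths of the underlying intervals and factoring out $\int \int \left\vert d\left( \xi,y\right) \right\vert d\xi dy=\left\Vert d\right\Vert _{1}$, the surviving double series over $l$ and $m$ is exactly the constant $\mathcal{C}$ displayed in (\ref{con}), which establishes the claimed inequality $\left\Vert \mathfrak{E}_{p,q,\upsilon;w_{1},w_{2};a_{1}^{+},a_{2}^{+}}^{\left( \gamma_{1};\gamma_{2};\gamma_{3};\gamma_{4};\gamma_{5};\gamma_{6}\right) }d\right\Vert _{1}\leq \mathcal{C}\left\Vert d\right\Vert _{1}$.

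The main obstacle will be the two analytic justifications underlying this formal manipulation. The first is the interchange of the double series with the double integral; since the whole estimate is carried out on absolute values, this reduces to an application of Tonelli's theorem once the majorant is known to be integrable. The second, and genuinely decisive, point is the finiteness of $\mathcal{C}$. Here I would invoke the Stirling asymptotics of the Gamma function: the factors $\left\vert \Gamma\left( p+l\right) \right\vert $ and $\left\vert \Gamma\left( q+\upsilon m\right) \right\vert $ in the denominator grow factorially in $l$ and $m$, and this growth dominates the Pochhammer numerators $\left\vert \left( \gamma_{1}\right) _{2l}\right\vert $, $\left\vert \left( \gamma_{2}\right) _{l+m}\right\vert $, $\left\vert \left( \gamma_{3}\right) _{l}\right\vert $ as well as the geometric terms $\left\vert w_{1}\left( b_{1}-a_{2}\right) \right\vert ^{l}$ and $\left\vert w_{2}\left( b_{2}-a_{1}\right) \right\vert ^{\upsilon m}$. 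Consequently the general term decays super-exponentially, the double series converges absolutely for every finite $w_{1},w_{2}$, and one obtains $0<\mathcal{C}<\infty$, which completes the argument.
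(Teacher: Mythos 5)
Your computation is, in substance, the paper's own proof: insert the series (\ref{ModEdef}) for the kernel in (\ref{IO}), pass to absolute values, interchange integration and summation by Fubini--Tonelli on the nonnegative majorant, integrate the power kernels termwise to produce the factors $\left( \operatorname{Re}\left( p\right) +l\right) ^{-1}$ and $\left( \operatorname{Re}\left( q\right) +\upsilon m\right) ^{-1}$, bound the remaining differences by interval lengths, and factor out $\left\Vert d\right\Vert _{1}$. The paper performs the same steps in a slightly different order (Fubini first, then the substitution $u=z-\xi$, $x=t-y$, then enlargement of the domain of integration, and only then termwise integration); this difference is immaterial. (Incidentally, your derivation naturally produces the lengths $b_{1}-a_{1}$ and $b_{2}-a_{2}$, whereas the paper's constant (\ref{con}) carries the cross-indexed quantities $b_{1}-a_{2}$ and $b_{2}-a_{1}$, which appear to be typographical.)

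The one point where you genuinely depart from the paper is the finiteness of $\mathcal{C}$. The paper disposes of this in a separate remark by citing the Srivastava--Daoust convergence criterion \cite{SD}; you argue directly from Stirling asymptotics, which is more self-contained --- but your accounting, as stated, is incorrect. The factors $\left\vert \Gamma\left( p+l\right) \right\vert$ and $\left\vert \Gamma\left( q+\upsilon m\right) \right\vert$, even combined with the $l!\,m!$, do \emph{not} dominate the numerator: by Stirling, $\left\vert \left( \gamma_{1}\right) _{2l}\right\vert$ grows like $4^{l}\left( l!\right) ^{2}$ up to polynomial factors, so already $\left\vert \left( \gamma_{1}\right) _{2l}\right\vert \left\vert \left( \gamma_{3}\right) _{l}\right\vert A^{l}$ against $\left\vert \Gamma\left( p+l\right) \right\vert \,l!\sim\left( l!\right) ^{2}$ leaves terms of size roughly $\left( 4A\right) ^{l}\left( l!\right) ^{2}$, which diverge. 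Convergence is rescued only by the three denominator Pochhammer factors $\left\vert \left( \gamma_{4}\right) _{l}\right\vert \left\vert \left( \gamma_{5}\right) _{l}\right\vert \left\vert \left( \gamma_{6}\right) _{l}\right\vert \sim\left( l!\right) ^{3}$, which your sketch omits from the balance. Once they are included, and the mixed factor is split via $\left( l+m\right) !\leq2^{l+m}\,l!\,m!$, the general term of (\ref{con}) is bounded by a constant multiple of $\frac{\left( 8A\right) ^{l}}{l!}\cdot\frac{\left( 2^{1/\upsilon}B\right) ^{\upsilon m}}{\left\vert \Gamma\left( q+\upsilon m\right) \right\vert }$ with $A=\left\vert w_{1}\left( b_{1}-a_{2}\right) \right\vert$ and $B=\left\vert w_{2}\left( b_{2}-a_{1}\right) \right\vert$, and the double series converges for all $w_{1},w_{2}$, so your conclusion stands after the bookkeeping is repaired. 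The precision matters here: the unmodified kernel (\ref{Edef}) also carries $\Gamma\left( q+\upsilon m\right)$ and the factorials in its denominator, yet its series converges only for $-\frac{1}{4}<z<\frac{1}{4}$ (the paper's remark following (\ref{Edef})), precisely because it lacks enough denominator Pochhammer/Gamma growth in the $l$-direction; an argument crediting the Gamma functions alone cannot distinguish the two situations.
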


\begin{proof}
By using the Fubini's theorem, we get%
\begin{align*}
\left\Vert \mathfrak{E}_{\gamma_{4},\gamma_{5},\gamma_{6};p;q;\upsilon;w_{1},w_{2};a_{1}^{+},b_{1}^{+}%
}^{\left(  \gamma_{1};\gamma_{2};\gamma_{3}\right)  }d\right\Vert _{1}\leq\int\limits_{a_{1}}^{a_{2}}\int%
\limits_{b_{1}}^{b_{2}}\left\vert d\left(  \xi,y\right)  \right\vert
\int\limits_{\xi}^{a_{2}}\int\limits_{y}^{b_{2}}\left(  z-\xi\right)
^{\operatorname{Re}\left(  p\right)  -1}\left(  t-y\right)
^{\operatorname{Re}\left(  q\right)  -1}%
\ \ \ \ \ \ \ \ \ \ \ \ \ \ \ \ \ \ \ \ \ \ \ \ \ \  & \\
\times\left\vert E_{\gamma_{4},\gamma_{5},\gamma_{6};p;q;\upsilon}^{\left(  \gamma_{1};\gamma_{2};\gamma
_{3}\right)  }\left(  w_{1}\left(
z-\xi\right)  ,w_{2}\left(  t-y\right)  \right)  \right\vert dtdzdyd\xi & \\
=\int\limits_{b_{1}}^{b_{2}}\int\limits_{a_{1}}^{a_{2}}\left\vert d\left(
\xi,y\right)  \right\vert \int\limits_{0}^{b_{2}-y}\int\limits_{0}^{a_{2}-\xi
}u^{\operatorname{Re}\left(  p\right)  -1}x^{\operatorname{Re}\left(
q\right)  -1}\left\vert E_{\gamma_{4},\gamma_{5},\gamma_{6};p;q;\upsilon}^{\left(  \gamma_{1};\gamma_{2}%
;\gamma_{3}\right)  }\left(  w_{1}%
u,w_{2}x\right)  \right\vert dudxd\xi dy\ \ \ \ \  & \\
\leq\int\limits_{b_{1}}^{b_{2}}\int\limits_{a_{1}}^{a_{2}}\left\vert d\left(
\xi,y\right)  \right\vert \int\limits_{0}^{b_{2}-a_{1}}\int\limits_{0}%
^{a_{2}-b_{1}}u^{\operatorname{Re}\left(  p\right)  -1}x^{\operatorname{Re}%
\left(  q\right)  -1}\left\vert E_{\gamma_{4},\gamma_{5},\gamma_{6};p;q;\upsilon}^{\left(  \gamma_{1}%
;\gamma_{2};\gamma_{3}\right)  }\left(
w_{1}u,w_{2}x\right)  \right\vert dudxd\xi dy  & \\
=\int\limits_{a_{1}}^{a_{2}}\int\limits_{b_{1}}^{b_{2}}\left\vert d\left(
\xi,y\right)  \right\vert dyd\xi\sum\limits_{l=0}^{\infty}\sum\limits_{n=0}%
^{\infty}\frac{\left\vert \left(  \gamma_{1}\right)  _{2l}\right\vert
\left\vert \left(  \gamma_{2}\right)  _{l+n}\right\vert \left\vert \left(
\gamma_{3}\right)  _{l}\right\vert w_{1}^{l}w_{2}^{\upsilon n}}{\left\vert
\left(  \gamma_{4}\right)  _{l}\right\vert \left\vert \left(  \gamma
_{5}\right)  _{l}\right\vert \left\vert \left(  \gamma_{6}\right)
_{l}\right\vert \left\vert \Gamma\left(  p+l\right)  \right\vert \left\vert
\Gamma\left(  q+\upsilon n\right)  \right\vert n!l!}%
\ \ \ \ \ \ \ \ \ \ \ \ \ \ \ \  & \\
\times\int\limits_{0}^{a_{2}-b_{1}}\int\limits_{0}^{b_{2}-a_{1}}%
u^{\operatorname{Re}\left(  p\right)  +l-1}x^{\operatorname{Re}\left(
q\right)  +\upsilon n-1}%
dxdu\ \ \ \ \ \ \ \ \ \ \ \ \ \ \ \ \ \ \ \ \ \ \ \ \ \ \ \ \ \ \ \ \ \ \ \ \ \ \ \ \ \ \ \ \ \ \ \ \ \ \ \
& \\
=\mathcal{M}\left\Vert d\right\Vert _{1}%
.\ \ \ \ \ \ \ \ \ \ \ \ \ \ \ \ \ \ \ \ \ \ \ \ \ \ \ \ \ \ \ \ \ \ \ \ \ \ \ \ \ \ \ \ \ \ \ \ \ \ \ \ \ \ \ \ \ \ \ \ \ \ \ \ \ \ \ \ \ \ \ \ \ \ \ \ \ \ \ \ \ \ \ \ \ \ \ \ \ \ \ \ \ \ \ \ \ \ \ \
&
\end{align*}

\end{proof}

\begin{remark}
The constant $M$ is finite since the series%
\[
\sum\limits_{l=0}^{\infty}\sum\limits_{n=0}^{\infty}\frac{\left(  \gamma
_{1}\right)  _{2l}\left(  \gamma_{2}\right)  _{l+n}\left(  \gamma_{3}\right)
_{l}\ z^{l}t^{\upsilon n}}{\left(  \gamma_{4}\right)  _{l}\left(  \gamma
_{5}\right)  _{l}\left(  \gamma_{6}\right)  _{l}\Gamma\left(  p+l\right)
\Gamma\left(  q+\upsilon n\right)  n!l!}%
\]
is absolutely convergent for all $z$ and $t$ (see \cite{SD}).
\end{remark}

Now we show that the integral operator $\mathfrak{E}_{\gamma_{4},\gamma_{5},\gamma_{6};p;q;\upsilon;w_{1}%
,w_{2};a_{1}^{+},b_{1}^{+}}^{\left(  \gamma_{1};\gamma_{2};\gamma_{3}%
\right)  }$ is bounded in the space $C\left(
\left[  a_{1},a_{2}\right]  \times\left[  b_{1},b_{2}\right]  \right)  $ of
continuous functions on $\left[  a_{1},a_{2}\right]  \times\left[  b_{1}%
,b_{2}\right]  $ with a max norm%
\begin{equation}
\left\Vert h\right\Vert _{C}=\max_{\substack{a_{1}\leq z\leq a_{2}\\b_{1}\leq
t\leq b_{2}}}\left\vert h\left(  z,t\right)  \right\vert . \label{hmax}%
\end{equation}

\begin{theorem}
The double integral operator $\mathfrak{E}_{\gamma_{4}
\gamma_{5}
\gamma_{6};p;q;\upsilon;w_{1},w_{2};a_{1}%
^{+},b_{1}^{+}}^{\left(  \gamma_{1};\gamma_{2};\gamma_{3}\right)  }$ is bounded in the space $C\left(  \left[
a_{1},a_{2}\right]  \times\left[  b_{1},b_{2}\right]  \right)  $, i.e.%
\begin{equation}
\left\Vert \mathfrak{E}_{\gamma_{4},\gamma_{5},\gamma_{6};p;q;\upsilon;w_{1},w_{2};a_{1}^{+},b_{1}^{+}%
}^{\left(  \gamma_{1};\gamma_{2};\gamma_{3}\right)  }d\right\Vert _{C}\leq\mathcal{M}\left\Vert d\right\Vert _{C},
\label{BIOC}%
\end{equation}
where $\mathcal{M}$ is given by (\ref{con}).
\end{theorem}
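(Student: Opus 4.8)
The plan is to mirror the $L^{1}$ argument of the previous theorem, but to replace the application of Fubini's theorem by a direct pointwise estimate followed by taking the supremum over the rectangle. First I would fix an arbitrary point $\left( z,t\right) \in \left[ a_{1},b_{1}\right] \times \left[ a_{2},b_{2}\right] $ and, starting from the definition (\ref{IO}), bound the integrand in absolute value. Since $\left\vert d\left( \xi ,y\right) \right\vert \leq \left\Vert d\right\Vert _{C}$ for every $\left( \xi ,y\right) $ in the domain, this constant can be pulled outside the integral, giving
\begin{equation*}
\left\vert \left( \mathfrak{E}_{p,q,\upsilon ;w_{1},w_{2};a_{1}^{+},a_{2}^{+}}^{\left( \gamma _{1};\gamma _{2};\gamma _{3};\gamma _{4};\gamma _{5};\gamma _{6}\right) }d\right) \left( z,t\right) \right\vert \leq \left\Vert d\right\Vert _{C}\int\limits_{a_{2}}^{t}\int\limits_{a_{1}}^{z}\left( z-\xi \right) ^{\func{Re}\left( p\right) -1}\left( t-y\right) ^{\func{Re}\left( q\right) -1}\left\vert E_{p,q,\upsilon }^{\left( \gamma _{1};\gamma _{2};\gamma _{3};\gamma _{4};\gamma _{5};\gamma _{6}\right) }\left( w_{1}\left( z-\xi \right) ,w_{2}\left( t-y\right) \right) \right\vert d\xi dy.
\end{equation*}

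Next I would carry out the substitution $u=z-\xi $, $x=t-y$, which transforms the right-hand side into a double integral over $\left[ 0,z-a_{1}\right] \times \left[ 0,t-a_{2}\right] $ of $u^{\func{Re}\left( p\right) -1}x^{\func{Re}\left( q\right) -1}\left\vert E_{p,q,\upsilon }^{\left( \gamma _{1};\gamma _{2};\gamma _{3};\gamma _{4};\gamma _{5};\gamma _{6}\right) }\left( w_{1}u,w_{2}x\right) \right\vert $. Because the integrand is nonnegative, the domain may be enlarged to the same fixed rectangle appearing in (\ref{con}), so that the resulting bound no longer depends on $\left( z,t\right) $. I would then insert the defining series (\ref{ModEdef}), estimate each Pochhammer factor by its absolute value, and interchange summation and integration. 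Evaluating the two elementary integrals of $u^{\func{Re}\left( p\right) +l-1}$ and $x^{\func{Re}\left( q\right) +\upsilon m-1}$ reproduces precisely the constant $\mathcal{C}$ of (\ref{con}); finally, passing to the maximum over $\left( z,t\right) $ in the definition (\ref{hmax}) yields the desired estimate (\ref{BIOC}).

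The computation is entirely parallel to the $L^{1}$ case, so no genuinely new difficulty arises. The one step deserving care is the interchange of the double sum with the double integral: this is legitimate precisely because, as recorded in the preceding Remark (see \cite{SD}), the series defining $E_{p,q,\upsilon }^{\left( \gamma _{1};\gamma _{2};\gamma _{3};\gamma _{4};\gamma _{5};\gamma _{6}\right) }$ converges absolutely for all values of its arguments; the same absolute convergence guarantees that $\mathcal{C}$ is finite, so the bound is meaningful and $\mathfrak{E}_{p,q,\upsilon ;w_{1},w_{2};a_{1}^{+},a_{2}^{+}}^{\left( \gamma _{1};\gamma _{2};\gamma _{3};\gamma _{4};\gamma _{5};\gamma _{6}\right) }$ is bounded on $C\left( \left[ a_{1},b_{1}\right] \times \left[ a_{2},b_{2}\right] \right) $.
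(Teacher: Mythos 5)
Your proposal is correct and is essentially the argument the paper intends: the paper's own proof is only the one-line remark that (\ref{BIOC}) follows from the definitions (\ref{IO}) and (\ref{hmax}), and your pointwise estimate (pulling out $\left\Vert d\right\Vert _{C}$, substituting $u=z-\xi$, $x=t-y$, enlarging the domain, and term-by-term integration of the absolutely convergent series) supplies precisely the omitted details, mirroring the paper's $L^{1}$ proof.
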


\begin{proof}
Considering (\ref{IO}) and (\ref{hmax}), we can obtain (\ref{BIOC}).
\end{proof}

\bigskip

In the next theorem, by using double Laplace transform of (\ref{IO}) we
construct double integrals involving the product of two bivariate
Mittag-Leffler function $E_{\gamma_{4},\gamma_{5},\gamma_{6};p;q;\upsilon}^{\left(  \gamma_{1};\gamma
_{2};\gamma_{3}\right)  }$.

\begin{theorem}
Assume that $p,q,\upsilon,\sigma,\tau,w_{1},w_{2},\gamma_{1},\gamma_{2},\gamma_{3}%
,\mu_{1},\mu_{2},\mu_{3}\in%
\mathbb{C}
$, and $\operatorname{Re}\left(p \right)>0,\operatorname{Re}\left(q \right)>0,\operatorname{Re}\left(\upsilon \right)>0,\operatorname{Re}\left(\sigma \right)>0,\operatorname{Re}\left(\tau \right)>0$. Then we get%
\begin{align}
&  \int\limits_{0}^{t}\int\limits_{0}^{z}\left(  z-\xi\right)  ^{p-1}\left(
t-y\right)  ^{q-1}E_{\frac{\gamma_{1}+1}{2},\gamma_{2},\gamma_{3};p;q;\upsilon}^{\left(  \gamma_{1};\gamma_{2};\gamma
_{3}\right)  }\left(  w_{1}\left(
z-\xi\right)  ,w_{2}\left(  t-y\right)  \right) \label{DIT}\\
&  \times\xi^{\sigma-1}y^{\tau-1}E_{\frac{\mu_{1}+1}{2},\mu_{2},\mu_{3};\sigma;\tau;\upsilon}^{\left(  \mu_{1}%
;\mu_{2};\mu_{3}\right)  }\left(  w_{1}%
\xi,w_{2}y\right)  d\xi dy\nonumber\\
&  =z^{p+\sigma-1}t^{q+\tau-1}E_{\frac{\gamma_{1}+\mu_{1}+1}{2},\gamma_{2}+\mu_{2},\gamma_{3}+\mu_{3};p+\sigma;q+\tau;\upsilon}^{\left(  \gamma
_{1}+\mu_{1};\gamma_{2}+\mu_{2};\gamma_{3}+\mu_{3}\right)  }\left(  w_{1}%
z,w_{2}t\right)  .\nonumber
\end{align}

\end{theorem}

\begin{proof}
With the help of the double convolution theorem for the double Laplace
transform in the left hand side of (\ref{DIT}), from \textit{Theorem 44,} we
derive%
\begin{align}
&  \mathbb{L}_{2}\left\{  \int\limits_{0}^{t}\int\limits_{0}^{z}\left(
z-\xi\right)  ^{p-1}\left(  t-y\right)  ^{q-1}E_{\frac{\gamma_{1}+1}{2},\gamma_{2},\gamma
_{3};p;q;\upsilon}^{\left(
\gamma_{1};\gamma_{2};\gamma_{3}\right)  }\left(  w_{1}\left(  z-\xi\right)  ,w_{2}\left(  t-y\right)
\right)  \right. \label{op}\\
&  \left.  \times\xi^{\sigma-1}y^{\tau-1}E_{\frac{\mu_{1}+1}{2},\mu_{2},\mu_{3};\sigma;\tau;\upsilon}^{\left(
\mu_{1};\mu_{2};\mu_{3}\right)  }\left(
w_{1}\xi,w_{2}y\right)  d\xi dy\right\} \nonumber\\
&  =\frac{1}{a^{p+\sigma}b^{q+\tau}}\left(  1-\frac{w_{2}^{\upsilon}%
}{b^{\upsilon}}\right)  ^{-\gamma_{2}-\mu_{2}}\left(  1-\frac{4w_{1}%
b^{\upsilon}}{a\left(  w_{2}^{\upsilon}-b^{\upsilon}\right)  }\right)
^{-\frac{\gamma_{1}}{2}-\frac{\mu_{1}}{2}}\nonumber\\
&  =\mathbb{L}_{2}\left\{  z^{p+\sigma-1}t^{q+\tau-1}E_{\frac{\gamma_{1}+\mu_{1}+1}{2},\gamma_{2}+\mu_{2},\gamma_{3}+\mu_{3};p+\sigma
;q+\tau;\upsilon}^{\left(  \gamma_{1}+\mu_{1};\gamma_{2}+\mu_{2};\gamma
_{3}+\mu_{3}\right)  }\left(  w_{1}z,w_{2}t\right)  \right\}  ,\nonumber
\end{align}
for $\left\vert \frac{w_{2}^{\upsilon}}{b^{\upsilon}}\right\vert <1$,
$\left\vert \frac{4w_{1}b^{\upsilon}}{a\left(  w_{2}^{\upsilon}-b^{\upsilon
}\right)  }\right\vert <1$, $\operatorname{Re}\left(  a\right)  >0$ and
$\operatorname{Re}\left(  b\right)  >0$.

Taking inverse Laplace transform on both sides of (\ref{op}), we write the
equation in (\ref{DIT}).
\end{proof}

\bigskip

Now, we give the following theorem about the composition of two operators with
different indices.

\begin{remark}
For the operations in this section, we should note that the modified I - Konhauser Mittag-Leffler functions are reduced to a simpler form by cancelling the parameters taken equal to each other. That is, from (\ref{ModEdef}), we obtain
\begin{align*}
E_{\frac{\gamma_{1}+1}{2}%
,\gamma_{2},\gamma_{3};p;q;\upsilon}^{\left(  \gamma_{1};\gamma_{2};\gamma_{3}\right)  }\left(  z,t\right)  =\sum_{l=0}^{\infty}%
\sum_{n=0}^{\infty}\frac{\left(  -1\right)  ^{l}\left(  \gamma_{1}\right)
_{2l}\left(  \gamma_{2}\right)  _{l+n}\left(  \gamma_{3}\right)  _{l}%
z^{l}t^{\upsilon n}}{\left( \frac{\gamma_{1}+1}{2}\right)  _{l}\left(  \gamma_{2}\right)
_{l}\left(  \gamma_{3}\right)  _{l}\Gamma\left(  p+l\right)  \Gamma\left(
q+\upsilon n\right)  n!l!}\\
=\sum_{l=0}^{\infty}%
\sum_{n=0}^{\infty}\frac{\left(  -1\right)  ^{l} 2^{2l}\left( \frac{\gamma_{1}}{2}\right)  _{l}\left(  \gamma_{2}+l\right)  _{n}%
z^{l}t^{\upsilon n}}{\Gamma\left(  p+l\right)  \Gamma\left(
q+\upsilon n\right)  n!l!}.
\end{align*}
After this simplification, the parameters can be replaced by constants.\\
It should be noted that this order will be taken into account for all operations in the rest of the work.
\end{remark}

\begin{theorem}
Assume that $p,q,\upsilon,\sigma,\tau,w_{1},w_{2},\gamma_{1},\gamma_{2},\gamma_{3}
,\mu_{1},\mu_{2},\mu_{3}\in%
\mathbb{C}
$, and $\operatorname{Re}\left(p \right)>0,\operatorname{Re}\left(q \right)>0,\operatorname{Re}\left(\upsilon \right)>0,\operatorname{Re}\left(\sigma \right)>0,\operatorname{Re}\left(\tau \right)>0$, then we have the relation%
\begin{align}
\left(  \mathfrak{E}_{\frac{\gamma_{1}+1}{2},\gamma_{2},\gamma_{3};p,q,\upsilon;w_{1},w_{2};a_{1}^{+},b_{1}^{+}}^{\left(
\gamma\right) }\mathfrak{E}_{\frac{\mu_{1}+1}{2},\mu_{2}, \mu_{3};\sigma,\tau,\upsilon;w_{1},w_{2};a_{1}^{+}%
,b_{1}^{+}}^{\left(\mu\right)}d\right)\left(z,t\right) \label{E1} \\
=\left( \mathfrak{E}_{\frac{\gamma_{1}+\mu_{1}+1}{2},\gamma_{2}+\mu_{2},
\gamma_{3}+\mu_{3};p+\sigma,q+\tau,\upsilon;w_{1},w_{2};a_{1}^{+},b_{1}^{+}}^{\left(  \gamma+\mu\right)  }d\right)
\left(  z,t\right) \nonumber
\end{align}
for any summable function $g\in L\left(  \left(  a_{1},a_{2}\right)
\times\left(  b_{1},b_{2}\right)  \right)  $, where
$\gamma:=\left(  \gamma_{1},\gamma_{2},\gamma_{3}\right)$
and $\mu:=\left(  \mu_{1},\mu_{2},\mu_{3}\right).$ \\

In particular,%
\begin{equation}
\left(  \mathfrak{E}_{\frac{\gamma_{1}+1}{2}%
;\gamma_{2};\gamma_{3};p,q,\upsilon;w_{1},w_{2};a_{1}^{+},b_{1}^{+}}^{\left(
\gamma\right)  }\mathfrak{E}_{\frac{-\gamma_{1}+1}{2}%
;-\gamma_{2};-\gamma_{3};\sigma,\tau,\upsilon;w_{1},w_{2};a_{1}^{+}%
,b_{1}^{+}}^{\left(  -\gamma\right)  }d\right)  \left(  z,t\right)  =\left(
\ _{t}\mathbb{I}_{b_{1}^{+}}^{q+\tau}\ _{z}\mathbb{I}_{a_{1}^{+}}^{p+\sigma
}d\right)  \left(  z,t\right)  \mathfrak{.} \label{E2}%
\end{equation}
\end{theorem}

\begin{proof}
Considering (\ref{IO}), we get
\begin{align*}
&  \left(  \mathfrak{E}_{\frac{\gamma_{1}+1}{2};\gamma_{2};\gamma
_{3};p,q,\upsilon;w_{1},w_{2};a_{1}^{+},b_{1}^{+}}^{\left(
\gamma\right)  }\mathfrak{E}_{\frac{\mu_{1}+1}{2};\mu_{2};\mu
_{3};\sigma,\tau,\upsilon;w_{1},w_{2};a_{1}^{+}%
,b_{1}^{+}}^{\left(  \mu\right)  }d\right)  \left(  z,t\right)\\
&  =\left(
\mathfrak{E}_{\frac{\gamma_{1}+1}{2};\gamma_{2};\gamma
_{3};p,q,\upsilon;w_{1},w_{2};a_{1}^{+},b_{1}^{+}}^{\left(
\gamma_{1};\gamma_{2};\gamma_{3}\right)  }\mathfrak{E}_{\frac{\mu_{1}+1}{2};\mu_{2};\mu
_{3};\sigma,\tau,\upsilon;w_{1},w_{2};a_{1}^{+}%
,b_{1}^{+}}^{\left(  \mu_{1};\mu_{2};\mu_{3}\right)  }d\right)  \left(  z,t\right) \\
&  =\int\limits_{b_{1}}^{t}\int\limits_{a_{1}}^{z}\left(  z-\xi\right)
^{p-1}\left(  t-y\right)  ^{q-1}E_{\frac{\gamma_{1}+1}{2};\gamma_{2};\gamma_{3};p,q,\upsilon}^{\left(  \gamma_{1}%
;\gamma_{2};\gamma_{3}\right)
}\left(  w_{1}\left(  z-\xi\right)  ,w_{2}\left(  t-y\right)  \right) \mathfrak{E}_{\frac{\mu_{1}+1}{2};\mu_{2};\mu
_{3};\sigma,\tau,\upsilon;w_{1},w_{2};a_{1}^{+},b_{1}^{+}%
}^{\left(  \mu_{1};\mu_{2};\mu_{3}\right)
}d\left(  \xi,y\right)  d\xi dy\\
& =\int\limits_{b_{1}}^{t}\int\limits_{a_{1}}^{z}\int\limits_{b_{1}}^{y}%
\int\limits_{a_{1}}^{\xi}\left(  z-\xi\right)  ^{p-1}\left(  t-y\right)
^{q-1}E_{\frac
{\gamma_{1}+1}{2};\gamma_{2};\gamma_{3};p,q,\upsilon}^{\left(  \gamma_{1};\gamma_{2};\gamma_{3}\right)  }\left(  w_{1}\left(
z-\xi\right)  ,w_{2}\left(  t-y\right)  \right)  \left(  \xi-u\right)
^{\sigma-1}  \\
& \ \ \ \ \  \times\left(  y-x\right)  ^{\tau-1}E_{\frac{\mu_{1}+1}{2};\mu_{2};\mu_{3};\sigma,\tau,\upsilon}^{\left(  \mu
_{1};\mu_{2};\mu_{3}\right)  }\left(
w_{1}\left(  \xi-u\right)  ,w_{2}\left(  y-x\right)  \right)  d\left(
u,x\right)  dudxd\xi dy. 
\end{align*}
Thus,%
\begin{align}
&  \left(  \mathfrak{E}_{\frac
{\gamma_{1}+1}{2};\gamma_{2};\gamma_{3};p,q,\upsilon;w_{1},w_{2};a_{1}^{+},b_{1}^{+}%
}^{\left(  \gamma\right)  }\mathfrak{E}_{\frac{\mu_{1}+1}{2};\mu_{2};\mu_{3};\sigma,\tau,\upsilon;w_{1}%
,w_{2};a_{1}^{+},b_{1}^{+}}^{\left(  \mu\right)  }d\right)  \left(  z,t\right)
\label{opop}\\
&  =\int\limits_{b_{1}}^{t}\int\limits_{a_{1}}^{z}\int\limits_{x}^{t}%
\int\limits_{u}^{z}\left(  z-\xi\right)  ^{p-1}\left(  t-y\right)
^{q-1}E_{\frac
{\gamma_{1}+1}{2};\gamma_{2};\gamma_{3};p,q,\upsilon}^{\left(  \gamma_{1};\gamma_{2};\gamma_{3}\right)  }\left(  w_{1}\left(
z-\xi\right)  ,w_{2}\left(  t-y\right)  \right)  \left(  \xi-u\right)
^{\sigma-1}\nonumber\\
&  \times\left(  y-x\right)  ^{\tau-1}E_{\frac{\mu_{1}+1}{2};\mu_{2};\mu_{3};\sigma,\tau,\upsilon}^{\left(
\mu_{1};\mu_{2};\mu_{3}\right)  }\left(
w_{1}\left(  \xi-u\right)  ,w_{2}\left(  y-x\right)  \right)  d\left(
u,x\right)  d\xi dydudx\nonumber\\
&  =\int\limits_{a_{1}}^{z}\int\limits_{b_{1}}^{t}\int\limits_{0}^{z-u}%
\int\limits_{0}^{t-x}\left(  z-k-u\right)  ^{p-1}\left(  t-s-x\right)
^{q-1}E_{\frac
{\gamma_{1}+1}{2};\gamma_{2};\gamma_{3};p,q,\upsilon}^{\left(  \gamma_{1};\gamma_{2};\gamma_{3}\right)  }\left(  w_{1}\left(
z-k-u\right)  ,w_{2}\left(  t-s-x\right)  \right) \nonumber\\
&  \times k^{\sigma-1}s^{\tau-1}E_{\frac{\mu_{1}+1}{2};\mu_{2};\mu_{3};\sigma,\tau,\upsilon}^{\left(  \mu_{1}%
;\mu_{2};\mu_{3}\right)  }\left(
w_{1}k,w_{2}s\right)  d\left(  u,x\right)  dsdkdxdu.\nonumber
\end{align}
Via \textit{Theorem 58} and (\ref{IO}), we write from (\ref{opop})%
\begin{align*}
& \left(  \mathfrak{E}_{\frac
{\gamma_{1}+1}{2};\gamma_{2};\gamma_{3};p,q,\upsilon;w_{1},w_{2};a_{1}^{+},b_{1}^{+}}^{\left(
\gamma\right)  }\mathfrak{E}_{\frac{\mu_{1}+1}{2};\mu_{2};\mu_{3};\sigma,\tau,\upsilon;w_{1},w_{2};a_{1}^{+}%
,b_{1}^{+}}^{\left(  \mu\right)  }d\right)  \left(  z,t\right) \\
& =\int \limits_{b_{1}}^{t}\int\limits_{a_{1}}^{z}d\left(  u,x\right)  \left(
t-x\right)  ^{q+\tau-1}\left(  z-u\right)  ^{p+\sigma-1} E_{\frac{\gamma_{1}+\mu_{1}+1}{2};\gamma_{2}+\mu
_{2};\gamma_{3}+\mu_{3};p+\sigma,q+\tau,\upsilon}^{\left(  \gamma_{1}+\mu_{1};\gamma_{2}%
+\mu_{2};\gamma_{3}+\mu_{3}\right)  }\left(  w_{1}\left(  z-u\right)
,w_{2}\left(  t-x\right)  \right) dudx \\
& =\mathfrak{E}_{\frac{\gamma_{1}+\mu_{1}+1}{2};\gamma_{2}+\mu
_{2};\gamma_{3}+\mu_{3};p+\sigma,q+\tau,\upsilon;w_{1},w_{2};a_{1}^{+},b_{1}^{+}%
}^{\left(  \gamma_{1}+\mu_{1};\gamma_{2}+\mu_{2};\gamma_{3}+\mu_{3}
\right)
}d\left(  z,t\right)
.\ \ \ \ \ \ \ \ \ \ \ \ \ \ \ \ \ \ \ \ \ \ \ \ \ \ \ \ \ \ \ \ \ \ \ \ \ \ \ \ \ \ \ \ \ \ \ \ \ \ \ \ \ \ \ \ \
\end{align*}
When $\mu_{1}=-\gamma_{1},\mu_{2}=-\gamma_{2},\mu_{3}=-\gamma_{3}$, (\ref{E1})
gives (\ref{E2}) considering (\ref{op0}).
\end{proof}

\begin{corollary}
If $p,q,\upsilon,\sigma,\tau,w_{1},w_{2},\mu_{1},\mu_{2},\mu_{3}\in%
\mathbb{C}
$, $\operatorname{Re}\left(  p\right)  >0$, $\operatorname{Re}\left(
q\right)  >0$, $\operatorname{Re}\left(  \upsilon\right)  >0$,
$\operatorname{Re}\left(  \sigma\right)  >0$, $\operatorname{Re}\left(
\tau\right)  >0$, then we have the following relation on $L\left(  \left(
a_{1},a_{2}\right)  \times\left(  b_{1},b_{2}\right)  \right)  $%
\begin{align*}
\mathfrak{E}_{\frac{1}{2};0;0;p,q,\upsilon;w_{1},w_{2};a_{1}^{+},b_{1}^{+}}^{\left(
0;0;0\right)  }\ \mathfrak{E}_{\frac{\mu_{1}+1}{2}%
;\mu_{2};\mu_{3};\sigma,\tau,\upsilon;w_{1},w_{2}%
;a_{1}^{+},b_{1}^{+}}^{\left(  \mu_{1};\mu_{2};\mu_{3}\right)  }  &  =\ _{t}\mathbb{I}_{b_{1}^{+}}^{q}%
\ _{z}\mathbb{I}_{a_{1}^{+}}^{p}\mathfrak{E}_{\frac{\mu_{1}+1}%
{2};\mu_{2};\mu_{3};\sigma,\tau,\upsilon;w_{1}%
,w_{2};a_{1}^{+},b_{1}^{+}}^{\left(  \mu_{1};\mu_{2};\mu_{3}\right)  }\\
&  =\mathfrak{E}_{\frac{\mu_{1}+1}{2};\mu_{2};\mu_{3};p+\sigma,q+\tau,\upsilon;w_{1},w_{2};a_{1}^{+},b_{1}^{+}%
}^{\left(  \mu_{1};\mu_{2};\mu_{3}\right)
}.
\end{align*}

\end{corollary}

\begin{corollary}
If $w_{1},w_{2},p,q,\upsilon,\sigma,\tau,\gamma_{6},\gamma_{5},\gamma
_{4},\gamma_{3},\gamma_{2},\gamma_{1}\in%
\mathbb{C}
$, $\operatorname{Re}\left(  p\right)  >0$, $\operatorname{Re}\left(
q\right)  >0$, $\operatorname{Re}\left(  \upsilon\right)  >0$,
$\operatorname{Re}\left(  \sigma\right)  >0$, $\operatorname{Re}\left(  p-\sigma\right)  >0$, $\operatorname{Re}\left(
\tau\right)  >0$, $\operatorname{Re}\left(  q-\tau\right)  >0$ and $\gamma_{4},\gamma_{5},\gamma_{6}\notin \mathbb{Z}^- \cup \{0\}$, then we get the following composition relationships:%
\begin{align*}
\left(  \ _{t}\mathbb{I}_{b_{1}^{+}}^{\tau}\ _{z}\mathbb{I}_{a_{1}^{+}%
}^{\sigma}\mathfrak{E}_{\gamma_{4};\gamma_{5};\gamma_{6};p,q,\upsilon;w_{1},w_{2};a_{1}^{+},b_{1}^{+}}^{\left(
\gamma_{1};\gamma_{2};\gamma_{3}\right)
}d\right)  \left(  z,t\right)   &  =\left(  \mathfrak{E}_{\gamma_{4};\gamma_{5};\gamma_{6};p+\sigma
,q+\tau,\upsilon;w_{1},w_{2};a_{1}^{+},b_{1}^{+}}^{\left(  \gamma_{1}%
,\gamma_{2},\gamma_{3}\right)  }d\right)
\left(  z,t\right) \\
&  =\left(  \mathfrak{E}_{\gamma_{4};\gamma_{5};\gamma_{6};p,q,\upsilon;w_{1},w_{2};a_{1}^{+},b_{1}^{+}%
}^{\left(  \gamma_{1};\gamma_{2};\gamma_{3}\right)  }\ _{t}\mathbb{I}_{b_{1}^{+}}^{\tau}\ _{z}\mathbb{I}_{a_{1}^{+}%
}^{\sigma}\right)  \left(  z,t\right)
\end{align*}
and%
\begin{align*}
\left(  \ _{t}D_{b_{1}^{+}}^{\tau}\ _{z}D_{a_{1}^{+}}^{\sigma}\mathfrak{E}%
_{\gamma_{4};\gamma_{5};\gamma_{6};p,q,\upsilon;w_{1},w_{2};a_{1}^{+},b_{1}^{+}}^{\left(  \gamma_{1};\gamma
_{2};\gamma_{3}\right)  }d\right)  \left(
z,t\right)   &  =\left(  \mathfrak{E}_{\gamma_{4};\gamma_{5};\gamma_{6};p-\sigma,q-\tau,\upsilon;w_{1}%
,w_{2};a_{1}^{+},b_{1}^{+}}^{\left(  \gamma_{1},\gamma_{2},\gamma_{3}%
\right)  }d\right)  \left(  z,t\right) \\
&  =\left(  \mathfrak{E}_{\gamma_{4};\gamma_{5};\gamma_{6};p,q,\upsilon;w_{1},w_{2};a_{1}^{+},b_{1}^{+}%
}^{\left(  \gamma_{1};\gamma_{2};\gamma_{3}\right)  }\ _{t}D_{b_{1}^{+}}^{\tau}\ _{z}D_{a_{1}^{+}}^{\sigma}d\right)
\left(  z,t\right)
\end{align*}
hold true for all $d\left(  z,t\right)  $ in $L\left(  \left(  a_{1}%
,a_{2}\right)  \times\left(  b_{1},b_{2}\right)  \right)  $.
\end{corollary}

\begin{theorem}
The integral operator $\left(  \mathfrak{E}_{\frac
{\gamma_{1}+1}{2};\gamma_{2};\gamma_{3};p,q,\upsilon,w_{1},w_{2}%
;a_{1}^{+},b_{1}^{+}}^{\left(  \gamma_{1};\gamma_{2};\gamma_{3}\right)  }d\right)  \left(  z,t\right)  $
has a left inverse operator denoted by $\left(  \mathbb{D}_{\frac
{\gamma_{1}+1}{2};\gamma_{2};\gamma_{3};p,q,\upsilon
,w_{1},w_{2};a_{1}^{+},b_{1}^{+}}^{\left(  \gamma_{1};\gamma_{2};\gamma
_{3}\right)  }d\right)  \left(
z,t\right)  $ on the space $L\left(  \left(  a_{1},a_{2}\right)  \times\left(
b_{1},b_{2}\right)  \right)  $,%
\[
\left(  \mathbb{D}_{\frac
{\gamma_{1}+1}{2};\gamma_{2};\gamma_{3};p,q,\upsilon,w_{1},w_{2};a_{1}^{+},b_{1}^{+}}^{\left(
\gamma_{1};\gamma_{2};\gamma_{3}\right)  }d\right)  \left(  z,t\right)  =\ _{t}D_{b_{1}^{+}}^{q+\tau
}\ _{z}D_{a_{1}^{+}}^{p+\sigma}\left(  \mathfrak{E}_{\frac{-\gamma_{1}+1}{2};-\gamma_{2};-\gamma_{3};p,q,\upsilon,w_{1}%
,w_{2};a_{1}^{+},b_{1}^{+}}^{\left(  -\gamma_{1};-\gamma_{2};-\gamma
_{3}\right)  }d\right)  \left(
z,t\right)  .
\]
\end{theorem}

\begin{proof}
For $\phi\in L\left(  \left(  a_{1},a_{2}\right)  \times\left(  b_{1}%
,b_{2}\right)  \right)  $,%
\begin{equation}
\phi\left(  z,t\right)  =\left(  \mathfrak{E}_{\frac{\gamma_{1}+1}{2};\gamma_{2};\gamma_{3};\sigma,\tau,\upsilon
,w_{1},w_{2};a_{1}^{+},b_{1}^{+}}^{\left(  \gamma_{1};\gamma_{2};\gamma
_{3}\right)  }d\right)  \left(
z,t\right)  . \label{phi}%
\end{equation}
Applying the operator $\left(  \mathfrak{E}_{\frac{-\gamma_{1}+1}{2};-\gamma_{2};-\gamma_{3};p,q,\upsilon,w_{1},w_{2}%
;a_{1}^{+},b_{1}^{+}}^{\left(  -\gamma_{1};-\gamma_{2};-\gamma_{3}%
\right)  }\right)  $ to
(\ref{phi}) and from the semigroup property, we obtain%
\begin{align}
\left(  \mathfrak{E}_{\frac{-\gamma_{1}+1}{2};-\gamma_{2};-\gamma_{3};p,q,\upsilon,w_{1},w_{2};a_{1}^{+},b_{1}^{+}}^{\left(
-\gamma_{1};-\gamma_{2};-\gamma_{3}\right)  }\phi\right)  \left(  z,t\right)   &  =\left(
\mathfrak{E}_{\frac{1}{2};0;0;p+\sigma,q+\tau,\upsilon,w_{1},w_{2};a_{1}^{+},b_{1}^{+}%
}^{\left(  0;0;0\right)  }d\right)  \left(  z,t\right) \label{eq}\\
&  =\ _{t}\mathbb{I}_{b_{1}^{+}}^{q+\tau}\ _{z}\mathbb{I}_{a_{1}^{+}%
}^{p+\sigma}d\left(  z,t\right)  .\nonumber
\end{align}
After using double fractional derivative operator to (\ref{eq}), we have%
\[
\ _{t}D_{b_{1}^{+}}^{q+\tau}\ _{z}D_{a_{1}^{+}}^{p+\sigma}\left(
\mathfrak{E}_{\frac{-\gamma_{1}+1}{2};-\gamma_{2}%
;-\gamma_{3};p,q,\upsilon,w_{1},w_{2};a_{1}^{+},b_{1}^{+}}^{\left(
-\gamma_{1};-\gamma_{2};-\gamma_{3}\right)  }\phi\right)  \left(  z,t\right)  =d\left(  z,t\right)
.
\]
That is%
\[
\left(  \mathbb{D}_{\frac{\gamma_{1}+1}{2};\gamma_{2};\gamma
_{3};p,q,\upsilon,w_{1},w_{2};a_{1}^{+},b_{1}^{+}}^{\left(
\gamma_{1};\gamma_{2};\gamma_{3}\right) }\phi\right)  \left(  z,t\right)  =d\left(  z,t\right) .
\]
Thus we get $\mathbb{D}_{\frac{\gamma_{1}+1}{2};\gamma_{2};\gamma
_{3};p,q,\upsilon,w_{1},w_{2};a_{1}^{+},b_{1}^{+}%
}^{\left(  \gamma_{1};\gamma_{2};\gamma_{3}\right)  }$, the left inverse of $\mathfrak{E}_{\frac{\gamma_{1}+1}{2};\gamma_{2};\gamma
_{3};p,q,\upsilon
,w_{1},w_{2};a_{1}^{+},b_{1}^{+}}^{\left(  \gamma_{1};\gamma_{2};\gamma
_{3}\right)  }$ on the function space
$L\left(  \left(  a_{1},a_{2}\right)  \times\left(  b_{1},b_{2}\right)
\right)  $.
\end{proof}

\section{Conclusion}
In this paper, our main purpose is to give a description of a family of finite biorthogonal polynomials in two variables and discover several properties. On the other hand the corresponding Mittag-Leffler functions are defined, and the constructions of modified versions of these finite biorthogonal polynomials and the corresponding Mittag-Leffler functions have given rise to be provided the semi-group property in section 3. It seems that such an analysis is novel in the literature.

\end{document}